\documentclass[a4paper,12pt,fleqn]{article}
\usepackage{graphicx}


\usepackage{mathtools}
\usepackage{mathrsfs}
\usepackage{enumerate,enumitem}
\usepackage[hypertexnames=false,hidelinks]{hyperref}
\usepackage{cleveref}
\usepackage{color}
\usepackage[utf8]{inputenc}
\usepackage[square,numbers]{natbib}
\usepackage{enumerate,enumitem}
\usepackage{amsmath,amssymb,amsthm,amstext}
\usepackage{mathrsfs}
\usepackage{color,graphicx}
\usepackage{comment}
\usepackage{fancyhdr}
\usepackage{vwcol,array}
\usepackage{emptypage}
\usepackage{footnote}
\makesavenoteenv{tabular}
\usepackage{mathtools}
\usepackage{verbatim}
\usepackage[hypertexnames=false]{hyperref}
\usepackage{cleveref}
\setlength{\voffset}{-2cm}
\setlength{\hoffset}{-0.5cm}
\setlength{\oddsidemargin}{0cm}
\setlength{\textwidth}{17cm}
\setlength{\textheight}{24cm}
\setlength{\footskip}{2cm}

\newtheorem{theorem}{Theorem}
\newtheorem{corollary}[theorem]{Corollary}
\newtheorem{lemma}[theorem]{Lemma}
\newtheorem{proposition}[theorem]{Proposition}
\theoremstyle{definition}
\newtheorem{definition}[theorem]{Definition}
\newtheorem{remark}[theorem]{Remark}

\newtheorem*{mainass}{Assumptions}
\newtheorem*{definition*}{Definition}
\newtheorem{example}[theorem]{Example}


\def\dd{\hspace*{1pt}\mathrm{d}}
\def\ee{\mathrm e}
\def\ii{\mathrm i}

\def\NN{\mathbb N}
\def\RR{\mathbb R}

\def\mb{\mathbb}
\def\mr{\mathrm}
\def\mc{\mathcal}
\def\ms{\mathscr}
\def\mk{\mathfrak}
\def\mf{\mathbf}

\def\Ell{\ms L}

\def\C{\mathrm C}

\def\AC{(\mathrm{ACP})}
\def\NC{(\mathrm{NCP})}

\def\sup{\operatorname{sup}}
\def\max{\operatorname{max}}

\def\U{\hspace*{1pt}\mathcal U}
\def\Q{\widetilde Q}
\def\P{\widetilde P}
\def\dt{\tfrac{\dd}{\dd t}}
\def\ds{\tfrac{\dd}{\dd s}}
\def\ddt{\frac{\dd}{\dd t}}

\def\t{\prescript{}{t}}

\def\veps{\varepsilon}

\def\Id{\operatorname{Id}}

\renewcommand\Re{\mathfrak R}

\renewcommand\max{\operatornamewithlimits{max}}

\renewcommand\sup{\operatornamewithlimits{sup}}


\parindent0pt

\newenvironment{enumI}{%
  \begin{enumerate}[leftmargin=*,align=left,itemsep=0cm,itemindent=0cm,label=(\roman*), ref=\roman*]
}{%
  \end{enumerate}
}

\newenvironment{enumA}{%
  \begin{enumerate}[leftmargin=*,align=left,itemsep=0cm,itemindent=0cm,label=(\alph*), ref=\alph*]
}{%
  \end{enumerate}
}  


\makeatletter
\newcommand{\nhyt}[2]{%
  \hypertarget{#1}{#2}%
    \protected@write\@mainaux{}{%
        \string\expandafter\string\gdef
          \string\csname\string\detokenize{#1}\string\endcsname{#2}%
    }%
  }
\newcommand{\hyl}[1]{%
  \hyperlink{#1}{\csname #1\endcsname}%
  }
\makeatother

\newcommand{\hyt}[2]{\nhyt{#2}{#1}}

\AtBeginDocument{
  \label{CorrectFirstPageLabel}
  
}


\begin{document}

\title{A second-order Magnus-type integrator for evolution equations with delay}

\author{Petra Csomós\thanks{Alfréd Rényi Institute of Mathematics, Reáltanoda utca 13-15., 1053 Budapest, Hungary,
ELTE Eötvös Loránd University and MTA-ELTE Numerical Analysis and Large Networks Research Group, Pázmány Péter sétány 1/C, 1117 Budapest, Hungary}
and Dávid Kunszenti-Kovács\thanks{Alfréd Rényi Institute of Mathematics, Reáltanoda utca 13-15., 1053 Budapest, Hungary}}
\date{\today}

\maketitle

\begin{abstract}

{
We rewrite abstract delay equations as nonautonomous abstract Cauchy problems allowing us to introduce a Magnus-type integrator for the former. We prove the second-order convergence of the obtained Magnus-type integrator. We also show that if the differential operators involved admit a common invariant set for their generated semigroups, then the Magnus-type integrator will respect this invariant set as well, allowing for much weaker assumptions to obtain the desired convergence. As an illustrative example we consider a space-dependent epidemic model with latent period and diffusion.
}

{Magnus integrator; quasilinear delay equation; convergence analysis; invariant sets; delayed epidemic model with space-dependence and diffusion.}
\end{abstract}

\section{Introduction}

The aim of this paper is to adapt the Magnus integrator for nonautonomous homogeneous problems to a wide class of nonautonomous delay problems. We are interested in problems of the form
\begin{equation}\label{eq:delay}\tag{QDE$_\varphi$}
\left\{\begin{aligned}
\dt u(t) &= Q(F(\t u))u(t), & t\in[0,\infty), \\
u(s) &= \varphi(s)\in X, & s\in[-\delta,0],
\end{aligned}
\right.
\end{equation}
where $X$ is a Banach space, $\t u\colon [-\delta,0]\to X$ denotes the $\delta$-history $\t u(s):=u(t+s)$ of the solution at time $t$ (i.e., the initial condition could also be written as $\prescript{}{0}u=\varphi$ with some initial history function $\varphi\colon [-\delta,0]\to X$), $Q(w)=Q_0+\Q(w)$ where $Q_0$ is an unbounded operator on $X$ and $\Q(w)$ is bounded for all $w\in X$, and $F\in \C([-\delta,0],X)$ actually only depends on the restriction to $[-\delta,-\epsilon]$ for some fixed $\epsilon\in (0,\delta]$. The role of $\epsilon$ and of the assumption that $F$ only depends on the restriction to $[-\delta,-\varepsilon]$ is to ensure that equation \eqref{eq:delay} never becomes implicit in the sense that the operator $Q(F(\t u))$ is determined already by time $t-\epsilon$ for all $t\ge 0$. The exact assumptions on the operators and functions involved will be detailed later.\\
Note that the case $Q_0=0$ corresponds to a quasilinear delay equation with bounded operators, while $Q_0\ne 0$ leads to the unbounded case. 
\medskip

In \citet{Magnus}, Magnus set out to solve the nonautonomous homogeneous problem
$\dt Y(t) = A(t)Y(t)$ ($t\geq0$), $Y(0)=Y_0$, where $Y(t)$ and $A(t)$ are linear operators on appropriate spaces. In the case when all of the operators $A(t)$ commute, the solution takes the simple form
$Y(t) = \exp \left( \int_{0}^t A(s)\dd s \right) Y_0$. In the general, noncommutative case, however, one has to correct the exponent, and the exact solution is given by the Magnus series expansion, involving integrals of commutators. The first term of this expansion corresponds to the commutative solution, and is a good approximant leading to a second-order numerical method using the midpoint rule to approximate the integral in the exponent:
\begin{equation}\label{eq:magnusY}
\widehat Y_{n+1}^{(\tau)} = \ee^{\tau A((n+1/2)\tau)}\widehat Y_n^{(\tau)}, \quad n\in\NN
\end{equation}
where $\tau>0$ is an arbitrary timestep and $\widehat Y_n^{(\tau)}$ denotes the numerical approximation to $Y(n\tau)$ with initial value $Y_0^{(\tau)}:=Y_0$.\\
Convergence of the classical Magnus integrators, such as \eqref{eq:magnusY}, has been widely studied in the literature for nonautonomous problems without delay. For finite dimensional spaces, the Magnus expansion, being the basis of the Magnus integrators, were analysed in \citet{Blanes-etal1} and \citet{Blanes-etal2}. The authors gave a condition on the Magnus expansion's convergence in \citet{Moan-Niesen}. The expansion in case of nonlinear equations was investigated in \citet{Casas-Iserles}. In \citet{Csomos}, a Magnus-type integrator (see \eqref{eq:magnus0} below) was derived for delay equations, moreover, its second-order convergence and positivity preserving property were also shown.\\
For infinite dimensional undelayed problems with inhomogeneity, in \citet{Gonzalez-etal} a numerical method was derived by using the Magnus expansion, and its second-order convergence was shown for sufficiently smooth solutions, in addition to (lower order) error bounds measured in the domain. The authors proved the first-order (operator) norm convergence of the same method under very mild conditions for homogeneous problems in \citet{Batkai-Sikolya}. \medskip

Delay equations describe processes where the time evolution of the unknown function not only depends on the actual state of the system but also on its past values. Delay differential equations find more and more frequent application in the modelling of scientific, financial, or even social phenomena, since there a delay term often naturally appears among the processes. One may here think of the role of the latent period when modelling the spread of epidemic diseases, the pregnancy period in population models, or the reaction time in any social or financial model.
In our problem \eqref{eq:delay}, the time parameters $\delta,\epsilon>0$ delimit the time window into the past that is influencing the present dynamics in the problem.
\medskip

Since the exact solution of problem \eqref{eq:delay} is difficult or even impossible to compute analytically, one needs to find a way to approximate it.
To this end, we first reformulate the delay problem as a nonautonomous problem, and then present a novel Magnus-type integrator based on Magnus integrators introduced for nonautonomous problems.
\medskip

For the reformulation as a nonautonomous problem, we first define 
the function $\widetilde u\colon[-\delta,\infty)\to X$ as
\begin{equation*}
\widetilde u(t):=\begin{cases}
\varphi(t), & t\in[-\delta,0], \\
u(t), & t\in[0,\infty),
\end{cases}
\end{equation*}
and the operators
\begin{equation*}
A(t):=Q(F(\t{\widetilde u})) \text{ for all } t\ge 0.
\end{equation*}
Then the function $\widetilde u$ satisfies the problem
\begin{equation}
\label{eq:ncp}
\left\{
\begin{aligned}
\dt \widetilde u(t) &= A(t)\widetilde u(t), & t\in[0,\infty), \\
\widetilde u(0) &= \varphi(0).
\end{aligned}
\right.
\end{equation}
Although the operators $A(t)$ depend on the unknown function $\widetilde u$ via $\t{\widetilde u}$, the minimum delay $\epsilon$ within the $Q$ term (stemming from $F$) allows problem \eqref{eq:ncp} to be treated as a nonautonomous Cauchy problem a posteriori. Indeed, for $t\in[0,\epsilon]$, $A(t)$ is actually determined as it only depends on the known restriction $\t{\widetilde u}|_{[-\delta,-\epsilon]}=\varphi|_{[t-\delta,t-\epsilon]}$. Hence solving the problem iteratively on the sub-intervals $[j\epsilon,(j+1)\epsilon]$, $j=0,1,\ldots$, yields an explicit nonautonomous Cauchy problem for each time-segment considered, provided well-posedness is maintained along the way. Hence, we will consider problem \eqref{eq:ncp} essentially as a formally nonautonomous problem on the whole time interval. The solution $\widetilde u$ can then be approximated by the method derived by Magnus in \citet{Magnus} for such problems. This method however uses the values $F(\t u)$, which have to be themselves approximated, involving an appropriate second-order approximation of $F$ based off of a grid that is compatible with the one used for $u$. \medskip

To this end, one defines an arbitrary $N\in\NN$, takes the time step $\tau:=\delta/N>0$, and denotes the approximate value of $u(n\tau)$ by $u_n^{(\tau)}$ for all $n\in\NN$. The Magnus-type integrator, which we will derive in detail in Section \ref{sec:magnus}, takes then the form
\begin{equation}\label{eq:magnus0}
\begin{aligned}
u_{n+1/2}^{(\tau)} &:= \left\{\begin{array}{ll}
\varphi((n+1/2)\tau-\delta) & \text{for}\:\ n=0,1,\dots,N-1, \medskip\\
\ee^{\frac\tau 2 Q\left(
\sum_{\ell=0}^{\lfloor\frac{\delta-\epsilon}{\tau}\rfloor}
\kappa_{\ell,\tau} F_{\ell,\tau}\left(u_{n-2N+\ell}^{(\tau)}\right)
\right)}u_{n-N}^{(\tau)} & \text{for}\:\ n\ge N,
\end{array}\right. \\
u_{n+1}^{(\tau)} &:= \ee^{\tau 
Q\left(
\sum_{\ell=0}^{\lfloor\frac{\delta-\epsilon}{\tau}\rfloor}
\kappa_{\ell,\tau} F_{\ell,\tau}\left(u_{n+\ell+1/2}^{(\tau)}\right)
\right)
}
u_n^{(\tau)}
\end{aligned}
\end{equation}
for $n\geq0$, where possible negative indices refer to the corresponding values of the history function, i.e., $u_n^{(\tau)}=\varphi(n\tau)$ for $n\le 0$, the half-indexed terms $u^{(\tau)}_{n+1/2}$ are auxiliary values (essentially approximating $u((n+1/2)\tau-\delta)$ for an application of the midpoint rule), and the expressions $\kappa_{\ell,\tau} F_{\ell,\tau}$ stem from the approximation of $F$. \medskip

Under appropriate smoothness and uniform exponential bounds on the generators $Q(\cdot)$ involved, we will prove the second-order convergence of the Magnus-type integrator \eqref{eq:magnus0} when applied to the abstract delay equation \eqref{eq:delay} on Banach spaces. Moreover, we show that the method inherits the invariance properties of the generators $Q(\cdot)$ (e.g.~positivity). \medskip

The paper is organised as follows. In Section \ref{sec:nonauto} we introduce the abstract setting of evolution equations like \eqref{eq:ncp}. In Section \ref{sec:magnus} we recall the original Magnus integrators, summarise the main results from the literature regarding its application to nonautonomous Cauchy problems \eqref{eq:ncp}, and then describe how we adapt this method to our case where we have no a priori knowledge of the operators $A(t)$ on the time interval $(\epsilon,\infty)$. \\
Section \ref{sec:conv} contains our main result, Theorem \ref{thm:main} on the Magnus-type integrator's second-order convergence when applied to the quasilinear delay equation \eqref{eq:delay}. Some of the assumptions needed for this convergence (or even the existence of the solution to the delay equation) -- especially a uniform exponential bound for the semigroups generated by the operators $Q(w)$ -- are typically not naturally achievable for \emph{all} $w\in X$. However, many problems admit invariants and have qualitative preservation features (e.g., positivity of the solutions), and we shall show that our Magnus-type integrator naturally exploits such invariants, allowing us to restrict our assumptions to some smaller, invariant closed subset $W$ of $X$ that we assume our initial history $\varphi$ runs within. This will allow us to prove in parallel that the Magnus-type integrator and the exact solution both exist for a positive amount of time (namely, $\epsilon$), with the former never leaving the invariant set. That in term will imply the second-order convergence on this time interval, implying that the solution itself stays in the invariant set as well. Iterating these arguments, we obtain our results for any compact time interval.\\
In Section \ref{sec:exm} we use a space-dependent epidemic model to illustrate the power of invariants (in this case both total population size and positivity) in ensuring second-order convergence.

\section{Autonomous and nonautonomous evolution equations}
\label{sec:nonauto}

In this section we introduce the notions necessary to understand the Magnus integrator and our approach to the error bounds. Throughout we shall assume that $X$ is a Banach space. Our main references are \citet{Engel-Nagel} and \citet{Nickel}.

\begin{definition}\label{def:semigroup}
A family $(\ee^{tA})_{t\ge 0}\subset \Ell(X)$ of bounded linear operators on $X$ is said to be a strongly continuous semigroup generated by the linear, closed, and densely defined operator $(A,D(A))$ if the following holds:
\begin{enumI}
\item $\ee^{0A}=\Id$, the identity operator in $X$,
\item $\ee^{(t+s)A}=\ee^{tA}\ee^{sA}$ for all $t,s\ge 0$,
\item the function $[0,\infty)\ni t\mapsto \ee^{tA}v$ is continuous for all $v\in X$,
\item there exists $\lim\limits_{t\to 0+}\tfrac 1t(\ee^{tA}v-v)=Av$ for all $v\in D(A)$.
\end{enumI}
\end{definition}
This strongly continuous semigroup describes the solution to the Abstract Cauchy Problem
\begin{equation}\tag*{$(\mr{ACP})_{x}$}
\left\{
\begin{aligned}
\dt \widehat u(t) &= A\widehat u(t), & t\in[0,\infty), \\
\widehat u(0) &= x\in X
\end{aligned}
\right.
\end{equation}
in the sense that $\widehat u(t)=\ee^{tA}x$. It is known \citep[see, e.g.,][Prop.~I.5.5]{Engel-Nagel} that such semigroups are exponentially bounded, i.e., there exist $M\ge 1$ and $\omega\in\RR$ such that $\|\ee^{tA}\|\leq M e^{\omega t}$ for all $t\geq 0$. We speak about a contraction semigroup if $M=1$ and $\omega=0$ can be chosen. A linear operator $(A,D(A))$ is called dissipative if $\|(\lambda-A)x\|\ge\lambda\|x\|$ holds for all $\lambda>0$ and $x\in D(A)$. \medskip

In many processes, however, one cannot assume the generator $(A,D(A))$ in $\AC$ to be constant in time, leading to so-called nonautonomous Cauchy problems, or $\NC$ for short.
Let $(A(t),D)$ be a linear operator on $X$ for every $t\in\RR$. Furthermore, let $x\in X$ and $s\in\RR$ be given. Then we consider the following nonautonomous problem for the differentiable unknown function $\widehat u\colon \RR\to X$:
\begin{equation}\tag*{$(\mr{NCP})_{s,x}$}
\left\{
\begin{aligned}
\dt \widehat u(t) &= A(t)\widehat u(t), & t\geq s, \\
\widehat u(s) &= x.
\end{aligned}
\right.
\end{equation}

The following definitions are based on \citet[Section VI.9]{Engel-Nagel}.

\begin{definition}
A continuous function $\widehat u\colon [s,\infty)\to X$ is called a (classical) solution of $\NC_{s,x}$ if $\widehat u\in\C^1(\RR,X)$, $\widehat u(t)\in D$ for all $t\ge s$, $\widehat u(s)=x$, and $\dt \widehat u(t)=A(t)\widehat u(t)$ for all $t\geq s$.
\end{definition}

\begin{definition}
For a family $(A(t),D)_{t\in\RR}$ of linear operators on the Banach space $X$, the nonautonomous Cauchy problem $\NC$ is called well-posed with regularity subspaces $(Y_s)_{s\in\RR}$ if the following holds.
\begin{enumI}
\item \emph{Existence}: For all $s\in\RR$ the subspace
\begin{equation*}
Y_s:=\big\{y\in X\colon\text{there exists a solution } \widehat u_s(\cdot,y) \text{ for } \NC_{s,y}\big\}\subset D
\end{equation*}
is dense in $X$.
\item \emph{Uniqueness}: For every $y\in Y_s$ the solution $\widehat u_s(\cdot,y)$ is unique.
\item \emph{Continuous dependence}: The solution depends continuously on $s$ and $x$, i. e., if $s_n\to s\in\RR$, $\widehat u_n\to y\in Y_s$ with $\widehat u_n\in Y_{s_n}$ then we have $\|\overline u_{s_n}(t,u_n)-\overline u_s(t,y)\|\to 0$ uniformly for $t$ in compact subsets of $\RR$,
where
\begin{equation*}
\overline u_r(t;y)=\left\{\begin{array}{ll}
\widehat u_t(t,y)&\:\:\text{if}\:\: r\le t, \\
y&\:\:\text{if}\:\: r>t.
\end{array}\right.
\end{equation*}
If, in addition, there exist constants $M\ge 1$ and $\omega\in\RR$ such that
\begin{equation*}
\|\widehat u_s(t,y)\|\le M\ee^{\omega(t-s)}\|y\|
\end{equation*}
for all $y\in Y_s$ and $t\ge s$, then $\NC$ is called well-posed with exponentially bounded solutions.
\end{enumI}
\end{definition}

\begin{definition}\label{def:evolfam}
A family $(\U(t,s))_{t\ge s}$ of linear, bounded operators on a Banach space $X$ is called an (exponentially bounded) evolution family if
\begin{enumI}
\item $\U(t,r)\U(r,s)=\U(t,s) \text{ and } \U(t,t) = \Id$ for all $t\ge r\ge s\in\RR$,
\item the map $(t,s)\mapsto \U(t,s)$ is strongly continuous,
\item $\|\U(t,s)\|\le M\ee^{\omega(t-s)}$ for some $M\ge 1$, $\omega\in\RR$ and all $t\ge s\in\RR$.
\end{enumI}
An evolution family is said to be \emph{contractive} if we can choose $\omega=0$ and $M=1$, and \emph{quasi-contractive} if we can choose $M=1$ for some $\omega\in\RR$.
\end{definition}

\begin{definition}\label{def:solving}
An evolution family $(\U(t,s))_{t\ge s}$ is called an evolution family solving $\NC$, if for every $s\in\RR$ the regularity subspace
\begin{equation*}
Y_s:=\big\{y\in X\colon [s,\infty)\ni t\mapsto \U(t,s)y \text{ solves }\NC_{s,y}\big\}
\end{equation*}
is dense in $X$.
\end{definition}

We have the following result connecting the well-posedness of $\NC$ to the existence of a unique evolution family solving it.

\begin{theorem}[{\citet[Prop.~2.5]{Nickel}}]\label{thm:well-posed}
Let $X$ be a Banach space, $(A(t),D(A(t)))_{t\in\RR}$ a family of linear operators on $X$. Then the nonautonomous Cauchy problem $\NC$ is well-posed if and only if there exists a unique evolution family $(\U(t,s))_{t\ge s}$ solving $\NC$.
\end{theorem}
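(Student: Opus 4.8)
The plan is to prove both implications separately, treating the construction and verification of the evolution family axioms (Definition \ref{def:evolfam}) as the substance of the forward direction, and the uniqueness of solutions as the delicate point of the converse.

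For the implication from a solving evolution family to well-posedness, suppose a unique evolution family $(\U(t,s))_{t\ge s}$ solving $\NC$ is given. Existence and density of the regularity subspaces $Y_s$ are immediate from Definition \ref{def:solving}, while continuous dependence follows at once from the strong continuity and the exponential bound built into Definition \ref{def:evolfam}. The only nontrivial point is uniqueness: given a solution $\widehat u$ of $\NC_{s,y}$ with $y\in Y_s$, I would fix $t\ge s$ and study the auxiliary function $r\mapsto \U(t,r)\widehat u(r)$ on $[s,t]$. Formally its derivative is $-\U(t,r)A(r)\widehat u(r)+\U(t,r)A(r)\widehat u(r)=0$, so the function is constant and hence $\widehat u(t)=\U(t,s)y$. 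Making this rigorous requires the backward relation $\tfrac{\dd}{\dd r}\U(t,r)=-\U(t,r)A(r)$ on a suitable dense set, which I would obtain by differentiating the cocycle identity $\U(t,r)\U(r,s)=\U(t,s)$ in the middle variable using the forward equation, together with an approximation argument exploiting the density of $Y_s$.

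For the converse, the natural definition is $\U(t,s)y:=\widehat u_s(t,y)$ for $y\in Y_s$, together with $\U(t,t):=\Id$, extended to all of $X$ by continuity. First I would check that this is well-defined and yields bounded operators, using the continuous-dependence estimate to control $\|\widehat u_s(t,y)\|$ and then extending by density using $\overline{Y_s}=X$. The cocycle property $\U(t,r)\U(r,s)=\U(t,s)$ would follow from the \emph{uniqueness} of solutions: the restriction of $\widehat u_s(\cdot,y)$ to $[r,\infty)$ solves $\NC_{r,\U(r,s)y}$ and therefore coincides with $\widehat u_r(\cdot,\U(r,s)y)$. Strong continuity transfers directly from continuous dependence, and $\U(t,t)=\Id$ from the initial condition.

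The hard part will be producing the exponential bound of Definition \ref{def:evolfam}(iii) from the mere continuous dependence contained in well-posedness. Here I would argue in two steps. First, local boundedness: fixing a compact parameter range, continuity of each orbit $t\mapsto \U(t,s)y$ makes it bounded, so the uniform boundedness principle yields $\sup\{\|\U(t,s)\|:0\le t-s\le 1\}=:M_0<\infty$, with uniformity in $s$ requiring a Baire-category argument applied to the jointly continuous two-parameter family. Second, bootstrapping: iterating the cocycle identity over unit steps gives $\|\U(t,s)\|\le M_0^{\,\lceil t-s\rceil}\le M\ee^{\omega(t-s)}$ with $\omega=\log M_0$, which is exactly the required bound. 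Uniqueness of the evolution family then follows from uniqueness of solutions together with density of the regularity subspaces, completing the equivalence.
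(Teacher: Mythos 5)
This theorem is not proved in the paper at all: it is quoted verbatim (as \cite[Prop.~2.5]{Nickel}) and used as a black box, so there is no in-paper argument to compare yours against; I can only assess your proposal on its own terms. Your treatment of the direction ``well-posedness $\Rightarrow$ unique solving evolution family'' is the standard construction and is essentially sound: $\U(t,s)y:=\widehat u_s(t,y)$ on $Y_s$, boundedness from continuous dependence plus linearity, extension by density, the cocycle law from uniqueness of solutions (note you also need $\U(r,s)Y_s\subset Y_r$, which follows since the restriction of a solution to $[r,\infty)$ witnesses $u(r)\in Y_r$), and uniqueness of the family from density.

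There are, however, two genuine gaps. First, in the direction ``evolution family $\Rightarrow$ well-posedness'', your uniqueness argument via $r\mapsto\U(t,r)\widehat u(r)$ needs the backward equation $\tfrac{\dd}{\dd r}\U(t,r)x=-\U(t,r)A(r)x$, and your plan to obtain it by ``differentiating the cocycle identity in the middle variable using the forward equation'' is circular: applying the product rule to $\U(t,r)\U(r,s)y$ already presupposes differentiability of $r\mapsto\U(t,r)z$, which is exactly what is missing. Definition~\ref{def:solving} only gives forward differentiability in the first variable on $Y_s$, and backward regularity of an evolution family is a genuinely stronger property (it is available in Kato-type settings with a common domain, but not from the bare definition here). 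Tellingly, your argument never invokes the \emph{uniqueness} of the evolution family, which is part of the hypothesis and is what the literature actually leans on at this point; an argument that proves uniqueness of solutions from mere existence of a solving family cannot be right as stated. Second, the exponential bound in Definition~\ref{def:evolfam}(iii) cannot be extracted from well-posedness as defined: continuous dependence plus Baire category yields uniform bounds only for $(t,s)$ in compact sets, and iterating the cocycle identity over unit steps then gives $M_0^{\lceil t-s\rceil}$ only with an $M_0$ depending on the compact window of $s$ considered, not a single pair $(M,\omega)$ valid for all $s\in\RR$. This is really a mismatch in the paper's conventions rather than a fixable defect of your argument: the paper's Definition~\ref{def:evolfam} builds exponential boundedness into the term ``evolution family'', whereas Nickel's Proposition~2.5 pairs plain well-posedness with an evolution family satisfying only the algebraic and strong-continuity conditions, reserving the exponential bound for the ``well-posed with exponentially bounded solutions'' variant. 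You should either prove the theorem for evolution families without condition (iii), or assume the exponentially bounded form of well-posedness.
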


As mentioned in the Introduction, our goal is to rephrase \eqref{eq:delay} as a nonautonomous Cauchy problem on the time interval $[0,\infty)$. Of great help in establishing well-posedness is the following consequence of a result by \citet[Thm. 4]{Kato53}.

\begin{theorem}
Let $J\subset\RR$ be a closed interval, and $(A(t),D)_{t\in J}$ a family of generators of contraction semigroups on the Banach space $X$ with common domain satisfying $A(\cdot)x\in~C^1(J,X)$ for all $x\in D$. Then the nonautonomous Cauchy problem $\dt u(t)=A(t)u(t)$ is well-posed with regularity subspaces $Y_s=D$ ($s\in J$) and admits a contractive evolution family $(\U(t,s))_{t\geq s; t,s\in J}$. In particular, for any $x\in D$ and $s\in J$ the initial condition $u(s)=x$ leads to a unique (classical) solution $(u(t))_{s\leq t\in J}$ with $u(t)\in D$ for all $s\leq t\in J$. 
\end{theorem}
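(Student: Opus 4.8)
The plan is to deduce the statement directly from Kato's evolution-operator theorem \cite[Thm.~4]{Kato53} by checking that its hypotheses are satisfied, and then to recast its conclusion in the language of Definitions~\ref{def:evolfam}--\ref{def:solving} and Theorem~\ref{thm:well-posed}. Since each $A(t)$ generates a contraction semigroup, we have $\lambda\in\rho(A(t))$ for every $\lambda>0$ and every $t\in J$; hence, should Kato's formulation require boundedly invertible reference operators in order to form and estimate $t\mapsto A(t)A(s)^{-1}$, this can be arranged by the harmless shift $A(t)\mapsto A(t)-\lambda\Id$ with $\lambda>0$. This shift preserves the common domain $D$ and the regularity $A(\cdot)x\in\C^1(J,X)$, and alters the associated evolution family only by the explicit scalar factor $\ee^{\lambda(t-s)}$, so that existence, uniqueness, and the $D$-invariance of solutions transfer back unchanged; the sharp contraction bound is recovered separately below.

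Next I would verify Kato's hypotheses for the family $(A(t),D)_{t\in J}$. The contraction-semigroup assumption supplies, via Hille--Yosida / Lumer--Phillips, the uniform resolvent bound $\|(\lambda-A(t))^{-1}\|\le\lambda^{-1}$ for all $\lambda>0$ and $t\in J$, i.e.\ stability with constants $M=1$, $\omega=0$; the common domain $D$ is given by assumption; and the strong regularity $A(\cdot)x\in\C^1(J,X)$ on $D$, together with the uniform boundedness of $A(t)A(s)^{-1}$, yields the (strong) continuous differentiability of $t\mapsto A(t)A(s)^{-1}$ demanded by \cite[Thm.~4]{Kato53}. With these in place, Kato's theorem produces a family $(\U(t,s))_{t\ge s;\,t,s\in J}$ that is strongly continuous, satisfies the evolution-family identities of Definition~\ref{def:evolfam}, leaves $D$ invariant, and for each $y\in D$ yields a classical solution via $t\mapsto\U(t,s)y$ with $\U(t,s)y\in D$. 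The contraction estimate $\|\U(t,s)\|\le 1$ then follows from the structure of Kato's construction, in which $\U$ arises as a limit of products of the contraction semigroups $\ee^{(t-s)A(\xi)}$ (equivalently, of the associated Yosida approximations), each factor having norm at most $1$; hence the family is contractive in the sense of Definition~\ref{def:evolfam}.

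It remains to phrase this in the paper's terms and to invoke Theorem~\ref{thm:well-posed}. Existence holds with $Y_s=D$: the inclusion $D\subset Y_s$ is exactly the solvability just obtained, $Y_s\subset D$ is built into the definition, and $D$ is dense as the domain of a generator. Continuous dependence is immediate from $\|\U(t,s)\|\le 1$. For uniqueness I would run the standard backward-propagation argument: given a classical solution $u$ and a fixed $t>s$, the map $r\mapsto\U(t,r)u(r)$ is differentiable on $[s,t]$ with derivative $-\U(t,r)A(r)u(r)+\U(t,r)A(r)u(r)=0$, so it is constant, forcing $u(t)=\U(t,s)u(s)=\U(t,s)x$; this simultaneously proves uniqueness and identifies the solution with the one furnished by $\U$. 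Theorem~\ref{thm:well-posed} then packages these facts as well-posedness of $\NC$ together with the uniqueness of the solving evolution family.

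The main obstacle I anticipate is the precise matching of the regularity hypothesis: confirming that strong $\C^1$-dependence $A(\cdot)x\in\C^1(J,X)$ on the common domain $D$ really does imply the differentiability (or bounded-variation) condition on $t\mapsto A(t)A(s)^{-1}$ in the exact form required by \cite[Thm.~4]{Kato53}. The contraction assumption trivialises the stability condition, so this regularity bookkeeping -- and the legitimacy of differentiating $r\mapsto\U(t,r)u(r)$, which relies on the backward differentiability of $\U$ in its second variable supplied by Kato -- is where the care is needed.
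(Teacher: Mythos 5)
Your proposal is correct and follows essentially the same route as the paper, which proves this theorem purely by invoking Kato's Theorem 4 (with the contractivity noted, in the subsequent remark, to come from Theorem 2 of the same paper) and translating its conclusion into the well-posedness framework of Theorem \ref{thm:well-posed}. Your additional care about the regularity hypothesis $t\mapsto A(t)A(s)^{-1}$ and the backward-propagation uniqueness argument fills in exactly the bookkeeping the paper leaves implicit.
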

\begin{remark}
The contractivity of the evolution family is actually part of the earlier Theorem 2 in the same paper.\\
Also, note that Kato's result talks of evolution families and well-posedness on a possibly bounded closed interval $J$ instead of $\RR$, and it is not immediately clear how the two can be connected. Let $J=[a,b]$ with $a,b\in\RR$. Since each $A(t)$ ($t\in J$) is a generator, it makes sense to extend $A(\cdot)$ to $\RR$ by setting $A(s)=A(a)$ when $s\leq a$ and $A(s)=A(b)$ when $b\leq s$. Then the evolution family corresponding to the extended interval can obviously be restricted to $J$ with all the properties preserved. But also the evolution family $(\U(t,s))_{s\leq t; t,s\in J}$ corresponding to the restricted problem has a natural extension to $\RR$ as follows.\\ Denote by $(T_a(\tau))_{\tau\geq0}$ the contraction semigroup generated by $A(a)$, and by $(T_b(\tau))_{\tau\geq0}$ the contraction semigroup generated by $A(b)$.
If $s\leq a$, then let $\mc{U}(t,s):=\mc{U}(t,a)T_a(a-s)$ and if $b\leq t$, then let $\mc{U}(t,s):=T_b(t-b)\mc{U}(b,s)$. This defines the evolution family for all $s\leq t$ in a way compatible with Definition \ref{def:evolfam} ($\omega$ may have to be replaced with $\max\{\omega,0\}$).
\end{remark}

An easy rescaling argument yields that the same holds if instead of generators of contraction semigroups we consider a family $(A(t))_{t\in J}$ such that the generated semigroups are uniformly quasi-contractive.

\begin{corollary}\label{cor:wellposed}
Let $J\subset\RR$ be a closed interval, and $(A(t),D)_{t\in J}$ a family of generators of uniformly quasi-contractive semigroups on the Banach space $X$ with common domain satisfying $A(\cdot)x\in~C^1(J,X)$ for all $x\in D$. Then the nonautonomous Cauchy problem $\dt u(t)=A(t)u(t)$ is well-posed with regularity subspaces $Y_s=D$ ($s\in J$) and admits a quasi-contractive evolution family $(\U(t,s))_{s\leq t; t,s\in J}$. In particular, for any $x\in D$ and $s\in J$ the initial condition $u(s)=x$ leads to a unique (classical) solution $(u(t))_{s\leq t\in J}$ with $u(t)\in D$ for all $s\leq t\in J$. 
\end{corollary}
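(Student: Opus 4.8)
The plan is to reduce everything to the preceding theorem by a single affine rescaling of the whole family of generators. By uniform quasi-contractivity there is one exponent $\omega\in\RR$ with $\|\ee^{sA(t)}\|\le\ee^{\omega s}$ for all $s\ge0$ and all $t\in J$; the essential point is that this $\omega$ is independent of $t$, which is exactly what lets a single shift work simultaneously for every generator. First I would set $B(t):=A(t)-\omega\Id$ on the same domain $D$. Since $\omega\Id$ is bounded, $B(t)$ is again a generator with domain $D$, and its semigroup is $\ee^{sB(t)}=\ee^{-\omega s}\ee^{sA(t)}$, so that $\|\ee^{sB(t)}\|\le1$; thus each $B(t)$ generates a contraction semigroup. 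Moreover $B(\cdot)x=A(\cdot)x-\omega x$ differs from $A(\cdot)x$ by a constant in $t$, so $B(\cdot)x\in\C^1(J,X)$ for every $x\in D$. Hence $(B(t),D)_{t\in J}$ satisfies all hypotheses of the preceding theorem.

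Applying that theorem to $(B(t),D)_{t\in J}$ yields well-posedness of $\dt v(t)=B(t)v(t)$ with regularity subspaces $D$ and a contractive evolution family $(\mathcal V(t,s))_{s\le t;\,t,s\in J}$; by Theorem \ref{thm:well-posed} this $\mathcal V$ is moreover the unique evolution family solving the $B$-problem. Next I would define
\begin{equation*}
\U(t,s):=\ee^{\omega(t-s)}\mathcal V(t,s),\qquad s\le t,\ t,s\in J,
\end{equation*}
and verify directly that it is a quasi-contractive evolution family: the cocycle identity and $\U(t,t)=\Id$ hold because the scalar factors multiply as $\ee^{\omega(t-r)}\ee^{\omega(r-s)}=\ee^{\omega(t-s)}$, strong continuity is inherited from $\mathcal V$ together with the continuity of $t\mapsto\ee^{\omega t}$, and $\|\U(t,s)\|\le\ee^{\omega(t-s)}$ gives the bound with $M=1$.

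The correspondence $v\mapsto u:=\ee^{\omega(\cdot-s)}v$ is the heart of the identification: a short computation with the product rule shows $\dt u(t)=\omega u(t)+\ee^{\omega(t-s)}B(t)v(t)=A(t)u(t)$, so $t\mapsto\U(t,s)y$ solves $\NC_{s,y}$ exactly when $t\mapsto\mathcal V(t,s)y$ solves $\dt v=B(t)v$ with $v(s)=y$. This identifies the regularity subspace of the $A$-problem with that of the $B$-problem, namely $D$, and transfers the classical-solution statement verbatim. Finally, since multiplication by the nowhere-vanishing scalar $\ee^{\omega(t-s)}$ sets up a bijection between evolution families solving the two problems, uniqueness of $\mathcal V$ forces uniqueness of $\U$, and Theorem \ref{thm:well-posed} then yields well-posedness of the $A$-problem.

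I do not expect a serious obstacle here: the argument is a routine rescaling. The only point that genuinely requires the stated hypothesis, rather than following purely formally, is the uniformity of $\omega$ in $t$; without a common exponent one could not subtract a single multiple of $\Id$ and land back in the contractive setting, and the whole reduction would break down. Everything else amounts to checking the evolution-family axioms and the solution correspondence, both of which become immediate once the rescaling is in place.
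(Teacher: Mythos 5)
Your proof is correct and is precisely the ``easy rescaling argument'' the paper invokes (it gives no further detail): replace $A(t)$ by $A(t)-\omega\Id$ using the single exponent $\omega$ supplied by uniform quasi-contractivity, apply Kato's theorem for contraction semigroups, and transport the evolution family and solutions back via the scalar factor $\ee^{\omega(t-s)}$. Nothing to add.
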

Hence, the unique solution to $\NC_{s,x}$ has the form
\begin{equation}\label{eq:solution}
\widehat u(t)=\U(t,s)x
\end{equation}
for all $t\ge s$. Our aim is to approximate the solution $u$ to problem \eqref{eq:delay}, rewritten as the nonautonomous problem \eqref{eq:ncp}, at certain time levels. To do this we introduce the Magnus-type integrator in the next section.

\section{Magnus-type integrator}
\label{sec:magnus}

We saw in the Introduction that the delay equation \eqref{eq:delay} could formally be written as the nonautonomous abstract Cauchy problem $\NC_{0,\varphi(0)}$ with the solution-dependent operator $A(t)=Q(F(\t u))$. Whilst well-posed autonomous abstract Cauchy problems have their solutions given through of a one-parameter strongly continuous semigroup, problem $\NC_{0,\varphi(0)}$ -- if well-posed and $\varphi(0)$ is in the regularity subspace $Y_0$ -- has its solution given through a unique two-parameter evolution family $(\U(t,s))_{t\ge s}$ (cf.~Theorem \ref{thm:well-posed}). Since the exact form of the evolution family $\U$ is usually unknown, we need to approximate the solution in \eqref{eq:solution}. To this end we define an arbitrary $N\in\NN$ and take the time step $\tau:=\delta/N>0$. Then the approximate value of $\widehat u(t_n)$ at time levels $t_n:=n\tau$, $n\in\NN$, is denoted by $\widehat u_n^{(\tau)}$. \medskip

In case of a finite dimensional space $X$, the evolution family $\U$ is the exponential of a bounded operator $\Omega(t,s)$ for $t\ge s\ge 0$. Magnus showed in \citet{Magnus} that the bounded operator $\Omega(t,s)$ could be expressed by the integral of an infinite series, called Magnus series. Casas and Iserles showed in \citet{Casas-Iserles} that the appropriate truncations of the series led to convergent approximations. The further approximation of the integral terms yields the Magnus-type integrators. More precisely, by the property of the evolution family we have
\begin{equation*}
\widehat u((n+1)\tau)=\U((n+1)\tau,s)x=\U((n+1)\tau,n\tau)\U(n\tau,s)x = \U((n+1)\tau,n\tau)\widehat u(n\tau)
\end{equation*}
for all $n\in\NN$. By approximating $\U((n+1)\tau,n\tau)$ by $\exp\left(\int_0^\tau A(n\tau+\zeta)\dd\zeta\right)$ as in \citet{Casas-Iserles}, and then by $\ee^{\tau A((n+1/2)\tau)}$ with the midpoint quadrature rule, we arrive at the formula of the simplest Magnus integrator
\begin{equation}\label{eq:magnusA}
\widehat u_{n+1}^{(\tau)} = \ee^{\tau A((n+1/2)\tau)}\widehat u_n^{(\tau)}
\end{equation}
for all $n\in\NN$ with $\widehat u_0^{(\tau)}=x$.
\medskip

In case of an infinite dimensional Banach space $X$, we consider formally the same formula \eqref{eq:magnusA} where the exponential refers to the strongly continuous semigroup generated by the corresponding operator (cf.~Definition \ref{def:semigroup}). \medskip

Since the Magnus integrator \eqref{eq:magnusA} gives only an approximation to the exact solution at time $t_n=n\tau$ for all $n\in\NN$, it is necessary to show that the approximate value converges to the exact value as the time step $\tau=\delta/N$ tends to zero, or equivalently, $N\in\NN$ tends to infinity. As is usual, we will want to show that our numerical scheme yields a good approximation on any compact time interval $[0,T]$.

\begin{definition}\label{def:conv}
Let $u$ denote the exact solution to problem \eqref{eq:delay}. Its approximation $u_n^{(\tau)}$ (or, equivalently, the corresponding numerical method) is called convergent of order $p>0$, if there exists $C>0$ such that $\|u(n\tau)-u_n^{(\tau)}\|\le C\tau^p$ holds for all $n\in\NN$ and $\tau=\delta/N>0$ with $n\tau\in[0,T]$, where the constant $C$ is independent of $n$ and $\tau$ but may depend on $n\tau$.
\end{definition}
We note that we defined the convergence for the sequence $\tau=\delta/N$ of the time steps in order to simplify our proofs, which could be done for all sequences $\tau_k>0$ and $n_k\in\NN$ with $n_k\tau_k\in[0,T]$ by introducing a more complicated formalism. Since one has an initial history function (or a set of data) on the time interval $[-\delta,0]$, it is natural to choose a time step that is compatible with it. \medskip

In what follows we present two results from the literature about the Magnus integrator \eqref{eq:magnusA} for nonautonomous problems, since we will use them in our analysis.
\begin{theorem}[{\citet[Thm.~2]{Gonzalez-etal}}]\label{thm:ostermann}
Let $(X,\|\cdot\|_X)$ and $(D,\|\cdot\|_D)$ be Banach spaces with $D$ densely embedded in $X$. We suppose that the closed linear operator $A(t)\colon D\to X$ is uniformly sectorial for $t\in[0,T]$. Moreover, we assume that the graph norm of $A(t)$ and the norm in $D$ are equivalent. We also assume that $A\in\C^1([0,T],\Ell(D,X))$, and in particular there then exists a constant $L_A>0$ such that 
\begin{equation*}
\|A(t)-A(s)\|_{\Ell(D,X)}\le L_A(t-s)
\end{equation*}
holds for all $0\le t\le s\le T$. Moreover, we introduce the notations
\begin{equation}\label{eq:g}
\begin{aligned}
g_n(t) &= \big(A(t)-A((n+1/2)\tau)\big)u(t), \quad t\in[n\tau,(n+1)\tau], \\
\|g_n\|_{X,\infty} &= \max\{\|g_n(t)\|_X\colon t\in[n\tau,(n+1)\tau]\}, \\
\|g\|_{X,\infty} &= \max\{\|g_n\|_{X,\infty}\colon n\in\NN, (n+1)\tau\in[0,T]\},
\end{aligned}
\end{equation}
and corresponding notations will also be used with $D$ instead of $X$.
Then the Magnus integrator \eqref{eq:magnusA} applied to problem $\NC_{0,\varphi(0)}$ is convergent of second order, that is, there exists a constant $C>0$, being independent of $n$ and $\tau$, such that the following estimate holds for the global error:
\begin{equation}\label{eq:rhs}
\|\widehat u(n\tau)-\widehat u_n^{(\tau)}\|\le C\tau^2(\|g'\|_{D,\infty}+\|g''\|_{X,\infty})
\end{equation}
for all $n\tau\in[0,T]$, provided that the quantities on the right-hand side are well-defined.
\end{theorem}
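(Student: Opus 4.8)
The plan is to follow the classical structure for one-step methods---\emph{Lady Windermere's fan}---in which the global error is assembled from the locally generated defects propagated forward by the numerical flow, so that \emph{consistency} (a local error of order $\tau^3$) together with \emph{stability} (a uniform bound on the discrete propagators) yields the global order-two estimate. Writing $t_n=n\tau$, $A_n^\ast:=A((n+1/2)\tau)$, and the global error $e_n:=u(t_n)-\widehat u_n^{(\tau)}$, I would first record the one-step recursion $e_{n+1}=d_{n+1}+\ee^{\tau A_n^\ast}e_n$ with $e_0=0$, where the local defect is $d_{n+1}:=u(t_{n+1})-\ee^{\tau A_n^\ast}u(t_n)$. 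Telescoping gives
\begin{equation*}
e_n=\sum_{j=1}^n\Big(\prod_{k=j}^{n-1}\ee^{\tau A_k^\ast}\Big)d_j
\end{equation*}
(with the factors ordered by decreasing $k$), so that $\|e_n\|\le M_{\mathrm{stab}}\sum_{j=1}^n\|d_j\|$ as soon as the products are bounded by a constant $M_{\mathrm{stab}}$ uniformly in $j\le n$ with $n\tau\le T$.

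For the local defect I would freeze the generator at the midpoint and compare the exact flow against the frozen semigroup $v(t):=\ee^{(t-t_n)A_n^\ast}u(t_n)$. Since $u-v$ solves $\dt(u-v)=A_n^\ast(u-v)+(A(t)-A_n^\ast)u$ with zero initial value, Duhamel's formula yields
\begin{equation*}
d_{n+1}=\int_{t_n}^{t_{n+1}}\ee^{(t_{n+1}-s)A_n^\ast}\,g_n(s)\,\dd s ,
\end{equation*}
with $g_n$ exactly the quantity in \eqref{eq:g}; crucially $g_n((n+1/2)\tau)=0$ by the choice of the frozen time. Substituting $s=(n+1/2)\tau+r$ and splitting $\ee^{(\tau/2-r)A_n^\ast}=\ee^{(\tau/2)A_n^\ast}+\big(\ee^{(\tau/2-r)A_n^\ast}-\ee^{(\tau/2)A_n^\ast}\big)$ separates two contributions. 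The first, $\ee^{(\tau/2)A_n^\ast}\int_{-\tau/2}^{\tau/2}g_n((n+1/2)\tau+r)\,\dd r$, is $O(\tau^3)$ because the Taylor term linear in $r$ integrates to zero by symmetry (the midpoint rule integrates affine functions exactly) while the remainder is controlled by $\|g_n''\|_{X,\infty}$. In the second contribution I would expand $g_n$ once more and, for the dominant term $r\,g_n'((n+1/2)\tau)$, invoke the identity $(\ee^{aA_n^\ast}-\ee^{bA_n^\ast})y=\int_b^a\ee^{\sigma A_n^\ast}A_n^\ast y\,\dd\sigma$ valid for $y\in D$; the graph-norm equivalence then gives $\|A_n^\ast g_n'\|_X\le c\|g_n'\|_D$, producing the $\|g'\|_{D,\infty}$ term, whereas the quadratic remainder again feeds into $\|g''\|_{X,\infty}$. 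Each piece is $O(\tau^3)$, so $\|d_{n+1}\|\le C\tau^3(\|g'\|_{D,\infty}+\|g''\|_{X,\infty})$. Note that this step only uses exponential boundedness of the semigroups in $\Ell(X)$ and the $D$-structure, and that the well-definedness of $g'$ and $g''$ is where the smoothness of $u$ and of $A(\cdot)$ is needed.

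The hard part is the stability estimate, i.e.\ the uniform bound on the products $\prod_{k=j}^{n-1}\ee^{\tau A_k^\ast}$. Naive submultiplicativity fails: each factor only obeys $\|\ee^{\tau A_k^\ast}\|\le M_0\ee^{\omega\tau}$, and compounding $O(1/\tau)$ such factors yields the useless bound $M_0^{O(1/\tau)}$ unless $M_0=1$. This is precisely where uniform sectoriality is indispensable: the $A_k^\ast$ generate analytic semigroups with uniform smoothing bounds $\|A_k^\ast\ee^{sA_k^\ast}\|_{\Ell(X)}\le C/s$, and $A\in\C^1([0,T],\Ell(D,X))$ makes consecutive frozen generators differ by $O(\tau)$. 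I would exploit these by telescoping the discrete propagator against the exact (bounded) evolution family $\U(t_n,t_j)$---or, equivalently, by a discrete perturbation/Gronwall argument in which the smoothing estimate absorbs the $1/s$ singularities and the Lipschitz bound supplies summable increments---to obtain an $M_{\mathrm{stab}}$ independent of $\tau$ and $n$. Combining the two ingredients then gives
\begin{equation*}
\|e_n\|\le M_{\mathrm{stab}}\sum_{j=1}^n\|d_j\|\le M_{\mathrm{stab}}\,\frac{T}{\tau}\,C\tau^3\big(\|g'\|_{D,\infty}+\|g''\|_{X,\infty}\big)=O(\tau^2),
\end{equation*}
which is the claimed second-order bound \eqref{eq:rhs}.
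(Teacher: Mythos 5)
This theorem is not proved in the paper at all: it is imported verbatim from \cite{Gonzalez-etal} as a known building block, so there is no in-paper proof to compare against. Your sketch correctly reconstructs the standard argument of that reference --- Lady Windermere's fan, the Duhamel representation of the defect with $g_n$ vanishing at the midpoint so that only $\|g'\|_{D,\infty}$ (via the graph-norm/smoothing identity for $\ee^{aA_n^\ast}-\ee^{bA_n^\ast}$) and $\|g''\|_{X,\infty}$ survive at order $\tau^3$, and a sectoriality-based stability bound for the frozen propagator products --- and you rightly flag the stability estimate as the one step that needs the parabolic smoothing and Lipschitz continuity of $A(\cdot)$ rather than naive submultiplicativity.
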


The following theorem is essentially the quasi-contractive, continuously differentiable special case of the consistency result from the proof of \citet[Thm.~3.2]{Batkai-Sikolya}, more specifically inequality (5) therein. It hinges on the fact that Corollary \ref{cor:wellposed} implies that the well-posedness, stability and local Hölder continuity conditions of that theorem are automatically satisfied.

\begin{theorem}[{\citet[Eq.~(5) in proof of Thm.~3.2]{Batkai-Sikolya}}]\label{thm:batkai}
We consider the problem $\NC$ on the Banach space $X$ and suppose the following.
\begin{enumA}
\item There exists a $c\in\RR$ such that $\|\ee^{h A(r)}\|\le \ee^{ch}$ for all $r\in\RR$ and $h >0$. Further, $A(t)=A+V(t)$, where $A$ is the generator of a strongly continuous semigroup, and $V(t)\in\Ell(X)$ for all $t\in\RR$.
\item The map $t\mapsto V(t)$ is continuously differentiable as a map from $[a,b]$ to $\Ell(X)$ for some $a,b\in\RR$.
\end{enumA}
Then for all $s,h\in\RR$ with $a\leq s<s+h\leq b$ we have the following estimate:
\begin{equation}\label{eq:magnuscons}
\|\U(s+h,s)-\ee^{\tau A(s+h/2)}\|\le L_{a,b}\ee^{c h}h^{2},
\end{equation}
where $L_{a,b}$ is the Lipschitz constant of $t\mapsto V(t)$ on $[a,b]$.
\end{theorem}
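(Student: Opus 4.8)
The plan is to compare the exact short-time evolution $\U(s+h,s)$ with the frozen-coefficient exponential $\ee^{hA(s+h/2)}$ through a variation-of-constants (Duhamel) identity, taking advantage of the fact that, after recentering the perturbation at the midpoint, $V(\cdot)$ vanishes there. First I would record the a priori bounds furnished by the hypotheses. Setting $B:=A(s+h/2)=A+V(s+h/2)$, assumption \term{a} gives $\|\ee^{hB}\|\le\ee^{ch}$ at once. Each $A(r)$ is a generator with common domain $D:=D(A)$ (as $V(r)$ is bounded), and $A(\cdot)x=Ax+V(\cdot)x\in\C^1([a,b],X)$ for $x\in D$ by \term{b}, while the semigroups are uniformly quasi-contractive by \term{a}; hence Corollary \ref{cor:wellposed} applies on $[a,b]$ and produces the evolution family $(\U(t,s))$ solving $\NC$. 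The rescaling $\widetilde A(r):=A(r)-c\,\Id$ converts \term{a} into a family of contraction generators, whose contractive evolution family $\widetilde\U$ satisfies $\|\widetilde\U(t,s)\|\le 1$; since $\U(t,s)=\ee^{c(t-s)}\widetilde\U(t,s)$, this yields the uniform bound $\|\U(t,s)\|\le\ee^{c(t-s)}$ for $a\le s\le t\le b$.

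Next I would decompose $A(r)=B+W(r)$ with $W(r):=V(r)-V(s+h/2)$, a bounded, continuously differentiable operator-valued function vanishing at $r=s+h/2$ and obeying $\|W(r)\|_{\Ell(X)}\le L_{a,b}\,|r-(s+h/2)|$. For $x\in D$ the curve $r\mapsto G(r)x:=\ee^{(s+h-r)B}\U(r,s)x$ is differentiable on $[s,s+h]$: using $\tfrac{\dd}{\dd r}\U(r,s)x=A(r)\U(r,s)x$ (valid as $\U(r,s)x\in D$ by Corollary \ref{cor:wellposed}) together with the commutation $B\ee^{tB}y=\ee^{tB}By$ on $D=D(B)$, the two contributions of the product rule combine to
\[
\tfrac{\dd}{\dd r}G(r)x=\ee^{(s+h-r)B}\big(A(r)-B\big)\U(r,s)x=\ee^{(s+h-r)B}W(r)\U(r,s)x.
\]
Integrating from $s$ to $s+h$ and noting $G(s)x=\ee^{hB}x$, $G(s+h)x=\U(s+h,s)x$, I obtain the Duhamel identity
\[
\U(s+h,s)x-\ee^{hB}x=\int_s^{s+h}\ee^{(s+h-r)B}W(r)\U(r,s)x\,\dd r,\qquad x\in D,
\]
which extends to all $x\in X$ by density of $D$ and boundedness of both sides.

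Finally I would take operator norms under the integral sign. The factors $\|\ee^{(s+h-r)B}\|\le\ee^{c(s+h-r)}$ and $\|\U(r,s)\|\le\ee^{c(r-s)}$ multiply to the constant $\ee^{ch}$, while $\|W(r)\|\le L_{a,b}|r-(s+h/2)|$ supplies one power of $h$ and the length of the integration window another, via
\[
\int_s^{s+h}\big|r-(s+h/2)\big|\,\dd r=\tfrac{h^2}{4}.
\]
Collecting the estimates gives $\|\U(s+h,s)-\ee^{hA(s+h/2)}\|\le L_{a,b}\ee^{ch}\tfrac{h^2}{4}\le L_{a,b}\ee^{ch}h^2$, which is the claim (indeed with a sharper constant).

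I expect the only genuinely delicate point to be the rigorous justification of the differentiation step and the subsequent density extension: specifically that $\U(r,s)$ carries the regularity subspace $D$ into $D$ with $\tfrac{\dd}{\dd r}\U(r,s)x=A(r)\U(r,s)x$ there, and that $B$ passes through its own semigroup on $D$ — facts guaranteed by Corollary \ref{cor:wellposed} and standard semigroup calculus. The numerical estimate itself is then elementary; the second order in $h$ is structural, arising solely from the symmetric vanishing of $W$ at the midpoint combined with the $O(h)$ length of the integration interval, with no regularity on $V$ beyond Lipschitz continuity required.
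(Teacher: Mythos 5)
Your proof is correct, but it does not follow the paper's route for the simple reason that the paper does not prove Theorem \ref{thm:batkai} at all: it is obtained as the quasi-contractive, continuously differentiable special case of the consistency estimate (inequality (5)) established in the proof of \cite[Thm.~3.2]{Batkai-Sikolya}, with Corollary \ref{cor:wellposed} invoked only to check that the well-posedness, stability and local H\"older continuity hypotheses of that theorem hold automatically in this setting. Your argument is a self-contained alternative, and it is the standard mechanism behind such midpoint consistency bounds: the midpoint-centred splitting $A(r)=B+W(r)$ with $B:=A(s+h/2)$ and $\|W(r)\|_{\Ell(X)}\le L_{a,b}|r-(s+h/2)|$, the Duhamel identity obtained by differentiating $r\mapsto\ee^{(s+h-r)B}\,\U(r,s)x$ on the common regularity subspace $D$, and the uniform bounds $\|\ee^{tB}\|\le\ee^{ct}$ and $\|\U(t,s)\|\le\ee^{c(t-s)}$ coming from hypothesis (a) and the quasi-contractive evolution family of Corollary \ref{cor:wellposed}. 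The computation $\int_s^{s+h}|r-(s+h/2)|\,\dd r=h^{2}/4$ is right, so you in fact obtain the sharper constant $L_{a,b}\ee^{ch}h^{2}/4$, which a fortiori gives \eqref{eq:magnuscons}. The one point that genuinely needs the care you flag is the product-rule differentiation: it requires $\U(r,s)x\in D=D(B)$ with $\tfrac{\dd}{\dd r}\U(r,s)x=A(r)\U(r,s)x$ (supplied by Corollary \ref{cor:wellposed} for $x\in D$, since $Y_s=D$ there) together with the commutation of $B$ with its own semigroup on $D$; the extension from $D$ to $X$ by density is then legitimate because both sides of the Duhamel identity depend boundedly on $x$. (You also correctly read the $\tau$ in the exponent of \eqref{eq:magnuscons} as $h$; that is a typo in the statement.) In short: the paper buys the result by citation at the cost of importing the framework of \cite{Batkai-Sikolya}, whereas your proof is longer but elementary, transparent about where the second order in $h$ comes from, and quantitatively slightly stronger.
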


To illustrate the process of obtaining our Magnus-type integrator, we shall first present a special case that already exhibits some of the new ideas involved, and then we indicate what further changes are needed to accommodate for the general case.

\begin{example}\label{exm:point-delay}
In this example we shall focus on the special case $F(\t u):=u(t-\delta)$, i.e., when the delay is concentrated on a specific point of the history function. Our equation \eqref{eq:delay} then takes the form
\begin{equation}\label{eq:delay_0}\tag{QDE'$_\varphi$}
\left\{
\begin{aligned}
\dt u(t) &= Q(u(t-\delta))u(t), & t\in[0,\infty), \\
u(s) &= \varphi(s), & s\in[-\delta,0].
\end{aligned}
\right.
\end{equation}
In this particular case, we have $A(t)=Q(u(t-\delta))$ in \eqref{eq:ncp}. Recall that $u(t-\delta)=\varphi(t-\delta)$ for $t\in[0,\delta]$, however, its value is unknown for $t>\delta$. Thus, we need to approximate $A((n+1/2)\tau)=Q(u((n+1/2)\tau-\delta))$ in the Magnus integrator \eqref{eq:magnusA} to obtain a working method. Now the natural idea would be to use the appropriate $\widehat u^{(\tau)}_*$, however $(n+1/2)\tau-\delta$ falls exactly between two points of our time grid, and has to be obtained via further approximation. We therefore introduce the corresponding term as an auxiliary value, obtained via another Magnus step with half time step. In full detail, we have the following.
\begin{definition*}
Let $N\in\NN$ be an arbitrary integer and $\tau:=\delta/N$. Then the \textbf{Magnus-type integrator for point-delay}, which yields an approximation $u_n^{(\tau)}$ to the solution $u(n\tau)$ of the point-delay equation \eqref{eq:delay_0} at time levels $n\tau$ is given as follows. We introduce the notation $u_n^{(\tau)}:=\varphi(n\tau)$ for $n=-N,\dots,0$. Then for $n\ge 0$, we have the recursion
\begin{equation}\label{eq:magnus1}
\begin{aligned}
u_{n+1/2}^{(\tau)} &:= \left\{\begin{array}{ll}
\varphi((n+1/2)\tau-\delta) & \text{for}\:\ n=0,1,\dots,N-1, \medskip\\
\ee^{\frac\tau 2 Q(u_{n-2N}^{(\tau)})}u_{n-N}^{(\tau)} & \text{for}\:\ n\ge N,
\end{array}\right. \\
u_{n+1}^{(\tau)} &:= \ee^{\tau Q(u_{n+1/2}^{(\tau)})}u_n^{(\tau)}.
\end{aligned}
\end{equation}
\end{definition*}

Note that since they are essentially auxiliary values, the way we indexed the terms $u_{n+1/2}^{(\tau)}$ is not indicative of the time layer they correspond to (which would actually be $n+1/2-N$). Rather, the indices reflect the natural order in which one would execute the algorithm, i.e.,
\[
\ldots, u_{n}^{(\tau)}\enspace;\enspace u_{n+1/2}^{(\tau)},u_{n+1}^{(\tau)}\enspace;\enspace u_{n+3/2}^{(\tau)},u_{n+2}^{(\tau)}\enspace;\enspace u_{n+5/2}^{(\tau)}\ldots,
\]
despite being able to compute $u_{n+1/2}^{(\tau)}$ already at earlier stages.
\end{example}

\medskip

In the general case of the quasilinear delay evolution equation \eqref{eq:delay}, we have $A(t)=Q(F(\t u))$ in \eqref{eq:ncp}. If we want to apply the Magnus integrator \eqref{eq:magnusA}, we need to be able to approximate $F(\t u)$. Even for $\t u$, all we have available is an approximation to some of its values, corresponding to the time levels in our grid. Thus we first need a discretisation of $F$ itself using only discrete values of $\t u$, and then use the approximation of those values in the final form of our method. The simplest approach is to make the discretisation of $F$ compatible with the original time grid, and use the same auxiliary Magnus step as in the previous Example \ref{exm:point-delay}. \medskip

To this end we introduce the approximation of $F$ in the following form
\begin{equation}\label{eq:Fapprox}
F(\xi)\approx\sum\limits_{\ell=0}^{\lfloor\frac{\delta-\epsilon}{\tau}\rfloor}\kappa_{\ell,\tau}F_{\ell,\tau}(\xi(-\delta+\ell\tau))
\end{equation}
for appropriate elements $\xi\in\C([-\delta,0],X)$, weights $\kappa_{\ell,\tau}\in\RR$, and functions $F_{\ell,\tau}\colon X\to X$ having properties to be detailed in Section \ref{sec:conv}. We rewrite now \eqref{eq:delay} as a nonautonomous problem \eqref{eq:ncp} with $A(t)=Q(F(\t u))$, apply the Magnus integrator \eqref{eq:magnusA}, and approximate $F$ as in \eqref{eq:Fapprox} to obtain for all $n\in\NN$:
\begin{equation}\label{eq:magnus_long1}
\begin{aligned}
& u((n+1)\tau)
\approx \ee^{\tau A((n+1/2)\tau)}u(n\tau) = \ee^{\tau Q\left(F\left(\prescript{}{(n+1/2)\tau}u\right)\right)}u(n\tau) \\
&\approx \ee^{\tau Q\left(\sum_{\ell=0}^{\lfloor\frac{\delta-\epsilon}{\tau}\rfloor}\kappa_{\ell,\tau}F_{\ell,\tau}\left(\prescript{}{(n+1/2)\tau}u(-\delta+\ell\tau)\right)\right)}u(n\tau)
= \ee^{\tau Q\left(\sum_{\ell=0}^{\lfloor\frac{\delta-\epsilon}{\tau}\rfloor}\kappa_{\ell,\tau}F_{\ell,\tau}\left(u((n+1/2)\tau-\delta+\ell\tau)\right)\right)}u(n\tau),
\end{aligned}
\end{equation}
where we used the definition of the history function in the last step. We approximate next the intermediate values using another Magnus step with time step $\tau/2$ but now by taking the left-rectangle rule when approximating the integral $\int_0^{\tau/2} A(n'\tau+\zeta)\dd\zeta\approx\frac\tau 2A(n'\tau)$ in the exponent, cf.~\citet{Casas-Iserles}:
\begin{equation}\label{eq:magnus_long2}
\begin{aligned}
&u((n+\ell)\tau-\delta+\tau/2)
\approx \ee^{\frac\tau 2A((n+\ell)\tau-\delta)}u((n+\ell)\tau-\delta)=\ee^{\frac\tau 2Q\left(F\left(\prescript{}{(n+\ell)\tau-\delta}u\right)\right)}u((n+\ell)\tau-\delta) \\
&\approx \ee^{\frac\tau 2Q\left(\sum_{k=0}^{\lfloor\frac{\delta-\epsilon}{\tau}\rfloor}\kappa_{k,\tau}F_{k,\tau}\left(u((n+\ell)\tau-\delta-\delta+k\tau)\right)\right)}u((n+\ell)\tau-\delta)
\end{aligned}
\end{equation}
for all $\ell=0,\dots,\lfloor\frac{\delta-\epsilon}{\tau}\rfloor$ with $n+\ell\geq N$. Observe that formula \eqref{eq:magnus_long2} can be reindexed by using $n\in\NN$ instead of $n+\ell$. However, since $A(\cdot)$ is not defined for negative times, whenever $n+\ell< N$, the values at the corresponding intermediate time levels should be obtained from the initial history function $\varphi$ instead. By combining \eqref{eq:magnus_long1} and \eqref{eq:magnus_long2}, we thus obtain the following definition.

\begin{definition}
Let $N\in\NN$ be an arbitrary integer and $\tau:=\delta/N$. Then the \textbf{Magnus-type integrator}, which yields an approximation $u_n^{(\tau)}$ to the solution $u(n\tau)$ of the delay equation \eqref{eq:delay} at time levels $n\tau$ is given as follows. As before, we use the notation $u_n^{(\tau)}:=\varphi(n\tau)$ for $n=-N,\dots,0$. Then for $n\ge 0$, we have the recursion
\begin{equation}\label{eq:magnus}
\begin{aligned}
u_{n+1/2}^{(\tau)} &:= \left\{\begin{array}{ll}
\varphi((n+1/2)\tau-\delta) & \text{for}\:\ n=0,1,\dots,N-1, \medskip\\
\ee^{\frac\tau 2 Q\left(
\sum_{\ell=0}^{\lfloor\frac{\delta-\epsilon}{\tau}\rfloor}
\kappa_{\ell,\tau} F_{\ell,\tau}\left(u_{n-2N+\ell}^{(\tau)}\right)
\right)}u_{n-N}^{(\tau)} & \text{for}\:\ n\ge N,
\end{array}\right. \\
u_{n+1}^{(\tau)} &:= \ee^{\tau 
Q\left(
\sum_{\ell=0}^{\lfloor\frac{\delta-\epsilon}{\tau}\rfloor}
\kappa_{\ell,\tau} F_{\ell,\tau}\left(u_{n+\ell+1/2}^{(\tau)}\right)
\right)
}
u_n^{(\tau)},
\end{aligned}
\end{equation}
where the exact properties of the weights $\kappa_{\ell,\tau}$ and of the functions $F_{\ell,\tau}$ will be presented in Section \ref{sec:conv}.
\end{definition}

For finite dimensional spaces $X$, it was shown in \citet{Csomos} that the Magnus-type integrator \eqref{eq:magnus1} was convergent of second order. Our present aim is to generalise this result to any Banach space $X$ and to general $F$ for the Magnus-type integrator \eqref{eq:magnus}.

\begin{remark}\label{rem:point-delay}
To fit Example \ref{exm:point-delay} in this general formulation, set $\kappa_{0,\tau}=1$ and $\kappa_{\ell,\tau}=0$ for $\ell\geq1$, with $F_{\ell,\tau}$ the identity for all $\ell\geq0$ .
\end{remark}

\begin{example}\label{exm:integral}
Let us consider a delay term where a fixed delay time interval uniformly governs the dynamics. More specifically, we shall take a closer look at the delay when
\begin{equation*}
F(\xi)=\frac{2}{\delta}
\int_{-\delta}^{-\delta/2} \xi(s)\dd s,
\end{equation*}
i.e, we have
\begin{equation*}
Q\left(\frac{2}{\delta}
\int_{-\delta}^{-\delta/2} u(t+s)\dd s
\right) \quad\text{with}\quad \delta>0
\end{equation*}
in \eqref{eq:delay}.

The first thing to note is that whenever $\delta/\tau=N$ is divisible by $2$, the endpoint $-\delta/2$ of the integral also falls on the discretisation grid, significantly simplifying things, and the odd $N$'s have to be treated slightly differently to fit the general framework. Alternatively, we could simply adapt the general scheme to this special situation by only considering even values for $N$, but we shall detail the odd $N$ case nevertheless.\\
In contrast to the point-interaction delay in Example \ref{exm:point-delay} where we could directly substitute the computed values $u^{(\tau)}_*$ into the delay term, we here have an integral $\int_{-\delta}^{-\delta/2} u(t+s)\dd s$ that itself has to be numerically approximated using some appropriate quadrature. Taking into consideration the order of the error that we need to achieve for the recursive inequalities to result in a second-order Magnus-type integrator, the error of quadrature has to be of the magnitude of $\tau^2$.

So for even $N$, we use the composite trapezoidal rule with nodes $-\delta+\ell\tau$ for $\ell=0,\dots,N/2$ as
\begin{equation*}
\frac{2}{\delta}\int_{-\delta}^{-\delta/2} u(t+s)\dd s\approx \frac 1N\left(u(t-\delta)+2\sum\limits_{\ell=1}^{N/2-1}u(t-\delta+\ell\tau)+u(t-\delta/2)\right).
\end{equation*}
The Magnus-type integrator thus has the form for $n\ge 0$:
\begin{equation*}
\begin{aligned}
u_{n+1/2}^{(\tau)} &:= \left\{\begin{array}{ll}
\varphi((n+1/2)\tau-\delta), &  n=0,\dots,N-1, \medskip\\
\ee^{\frac\tau 2 Q\left(
\frac{1}{N}
\left(
u_{n-2N}^{(\tau)}+u_{n-2N+N/2}^{(\tau)}+2\sum\limits_{\ell=1}^{N/2-1}u_{n-2N+\ell}^{(\tau)}
\right)
\right)}u_{n-N}^{(\tau)}, & n\ge N,
\end{array}\right. \\
u_{n+1}^{(\tau)} &:= \ee^{\tau 
Q\left(
\frac{1}{N}\left(
u_{n+1/2}^{(\tau)}+u_{n+N/2+1/2}^{(\tau)}+2\sum\limits_{\ell=1}^{N/2-1}u_{n+\ell+1/2}^{(\tau)}
\right)
\right)
}
u_n^{(\tau)}.
\end{aligned}
\end{equation*}
It has the form \eqref{eq:magnus} with weights $\kappa_{0,\tau}=\kappa_{N/2,\tau}=1/N$ and $\kappa_{\ell,\tau}=2/N$ for $\ell=1,\dots,N/2-1$ where each $F_{\ell,\tau}$ is the identity. We remark that the weights $\kappa_{\ell,\tau}$ sum up to $1$ (cf.~\eqref{eq:weights} later on).\\
For odd values of $N$, the point $\delta/2$ is not part of the grid, so we have to use the truncated approximation
\begin{align*}
\frac{2}{\delta}\int_{-\delta}^{-\delta/2-\tau/2} u(t+s)\dd s\approx \frac 1N\left(u(t-\delta)+2\sum\limits_{\ell=1}^{(N-1)/2}u(t-\delta+\ell\tau)\right)
\end{align*}
instead, again with error of order $\tau^2$, i.e., $\kappa_{0,\tau}=1/N$ and $\kappa_{\ell,\tau}=2/N$ for $\ell=1,\dots,(N-1)/2$.
 \medskip

It is reasonable to wonder what happens if the delays above are not given in the convenient form where we are looking at a convex combination/normalised integral, for instance, if we were dealing with a delay of the form $Q\left(\int_{-\delta}^{-\delta/2} u(t+s)\dd s\right)$. Actually, this is not really an issue, as we may then define $\mc{Q}(x):=Q(x\delta/2)$ and thereby revert to the convex combination case detailed above. It is rather a matter of convenience, allowing our Assumptions detailed below to be formulated in a less cumbersome way.
\end{example}

\section{Convergence}
\label{sec:conv}

This section contains our main result regarding the second-order convergence of Magnus-type integrator \eqref{eq:magnus} applied to the quasilinear delay equation \eqref{eq:delay}. We have already seen how problem \eqref{eq:delay} can formally be written as the nonautonomous problem $\NC_{0,\varphi(0)}$. Hence, Definition \ref{def:conv} of the convergent approximation remains valid also for the Magnus-type integrator \eqref{eq:magnus} applied to the delay problem \eqref{eq:delay}.
From now on we use the notations $\dot h_-(0)$ and $\dot h_+(0)$ for the left and right derivatives of a function $h$ at zero, respectively. \\
We will need the following list of assumptions, with their motivation following right after.

\begin{mainass}
Let $X$ be a Banach space, $W\subset X$ a closed subset and $D\subset X$ a dense subspace.
\begin{enumI}
\item\label{Q} We have $Q(x)=Q_0+\Q(x)=Q_0+\widehat{Q}(x)+c\Id$ for all $x\in X$, where $c\geq0$, $(Q_0,D)$ generates a contraction semigroup on $X$, $\left(x\mapsto \widehat{Q}(x)\right)\in\C(X,\Ell(X))$, and the operators $\widehat{Q}(w)\in\Ell(X)$ ($w\in W$) are all dissipative. We shall also assume $0\in\varrho(Q_0)$, which is no real added restriction as we may simply replace $Q_0$ by $Q_0-\veps\Id$ and $c$ by $c+\veps$ for some $\veps>0$.
\item[\hyt{$\mr{(\ref{smooth2})}_0$}{smooth}]
The function $\Q:X\to\Ell(X)$ is continuously differentiable on the set $W$.
\item
\label{smooth2} The function $\Q:X\to\Ell(X)$ is twice continuously differentiable on the set $W$.
\item\label{dissip} For any $w\in W\cap D$ the operator $\widehat{Q}(w)$ leaves $D$ invariant, and is bounded and dissipative on $(D,\|\cdot\|_D)$, where $\|x\|_D:=\|Q_0x\|_X+\|x\|_X$. In addition, $\Q:D\to\Ell(D)$ is continuously differentiable on $W\cap D$.
\item\label{analytic} The operator $(Q_0,D)$ is a sectorial operator generating an (analytic) contraction semigroup with sector $\Sigma_\alpha$ for some $\alpha>0$, and the operators $\widehat{Q}(w)\in\Ell(X)$ ($w\in W$) are all such that for any $|\phi|<\alpha$ the operator $\ee^{\ii\phi}\widehat{Q}(w)$ is dissipative.
\item[\hyt{$\mr{(\ref{ass:1})}_0$}{ass:0}] $F\in \C^1\left(\C([-\delta,0],X),X\right)$ with $F(u_1)=F(u_2)$ whenever $(u_1-u_2)|_{[-\delta,-\epsilon]}=0$.
\item\label{ass:1} $F\in \C^2\left(\C([-\delta,0],X),X\right)\cap \C^1\left(\C([-\delta,0],D),D\right)$ with $F(u_1)=F(u_2)$ whenever $(u_1-u_2)|_{[-\delta,-\epsilon]}=0$.
\item\label{ass:2} The functions $F_{\ell,\tau}:X\to X$ and weights $\kappa_{\ell,\tau}\in \mb{R}$ ($0\leq\ell\leq (\delta-\epsilon)/\tau$) are such that there exists a constant $L>0$ independent of $\tau$ such that 
\begin{equation}\label{eq:weights}
\sum_{\ell=0}^{\lfloor\frac{\delta-\epsilon}{\tau}\rfloor}
\left|\kappa_{\ell,\tau}\right|\leq L
\end{equation}
and the function
\begin{align}\label{eq:kvadra}
F_\tau(\xi):=\sum_{\ell=0}^{\lfloor\frac{\delta-\epsilon}{\tau}\rfloor}
\kappa_{\ell,\tau} F_{\ell,\tau}\left(\xi(-\delta+\ell\tau)\right)
\end{align}
satisfies
\begin{equation}\label{eq:kvadr_err}
\left\|
F(\xi)-F_\tau(\xi)
\right\|_X\leq \ms{C} \|\xi\|_{\C^2([-\delta,0],X)}\tau^2
\end{equation}
for some constant $\ms{C}\geq 0$ independent of $\tau$. In addition, there exists a constant $L_{F}>0$ such that $F$ and $F_{\ell,\tau}$ are Lipschitz with constant $L_F$ for all $\tau$ and $\ell$.
\item\label{ass:3} The set $W$ satisfies
\begin{equation*}
\left[ v\in(\C([-\delta,0],X)) \text{ and } v([-\delta,-\epsilon])\subset W\right]\Rightarrow F(v)\in W
\end{equation*}
and
\begin{equation}\label{eq:Winv}
\sum_{\ell=0}^{\lfloor\frac{\delta-\epsilon}{\tau}\rfloor}
\kappa_{\ell,\tau} F_{\ell,\tau}(W)\subset W
\end{equation}
 for all $\tau$.
\item\label{ass:4} The initial history function $\varphi$ is in $\C^2([-\delta,0],X)\cap \C^1([-\delta,0],D)$ and satisfies $\varphi(t)\in W$ for all $t\in[-\delta,0]$ and the boundary conditions
 \begin{align}
\label{eq:C1}
 \dot\varphi_-(0)&=Q(F(\varphi))\varphi(0),
 \\
 \label{eq:C2}
 \ddot\varphi_-(0)&=\left(\widetilde{Q}'(F(\varphi))F'(\varphi)\dot\varphi\right)\varphi(0)+Q(F(\varphi))^2\varphi(0).
  \end{align}
\end{enumI}
\end{mainass}

For non-autonomous equations, well-posedness is very much not easily guaranteed, and we chose to rely on the Corollary \ref{cor:wellposed} of Kato's result detailed earlier. This motivates the common domain prescribed by Assumption \eqref{Q}, the special form of the operators as bounded perturbations of the same (unbounded) $Q_0$, and the contractivity of the corresponding unperturbed semigroup. In addition, we need the smoothness of various terms of the equation provided by Assumptions \hyl{smooth} and \hyl{ass:0}.\\
The stronger versions \eqref{smooth2} and (part of) \eqref{ass:1} are the usual smoothness requirements on the various terms of the equation matching the desired order of the method.\\
In order to guarantee the preservation of second-order smoothness of the solution, we need to be able to work on the common domain $D$ as well, leading to Assumption \eqref{dissip} and the part of Assumption \eqref{ass:1} pertaining to $D$.\\
Assumption \eqref{analytic} mirrors the one in \citet{Gonzalez-etal} needed there to guarantee the second-order convergence of the Magnus method for autonomous equations.\\
Assumption \eqref{ass:2} is needed so the discretisation of $F$ does not ruin the order of the proposed method, Assumption \eqref{ass:4} on the initial history function guarantees the smooth transition of the solution at $t=0$, and the invariance conditions of Assumption \eqref{ass:3} make it possible to iterate the arguments beyond the initial $[0,\epsilon]$ solution window.

\begin{remark}
The condition \eqref{eq:Winv} is automatically satisfied when $W$ is convex, $\sum_{\ell=0}^{\lfloor\frac{\delta-\epsilon}{\tau}\rfloor} \kappa_{\ell,\tau}=1$ and $\kappa_{\ell,\tau}\geq 0$ and $F_{\ell,\tau}(W)\subset W$
 for all $\tau$ and $\ell$.
\end{remark}

The following results will show that under appropriate smoothness assumptions on $Q(\cdot)$ and the initial history function $\varphi$, the solution itself will exhibit similar smoothness properties on $X$ and $D$. Also, we shall show that Theorem \ref{thm:ostermann} is applicable to our setting.

The next two results show that our Assumptions imply that the semigroups generated by the operators $Q(w)$ ($w\in W$) are uniformly quasi-contractive, and uniformly sectorial as well.

\begin{lemma}\label{le:contraction}
Under Assumption \eqref{Q} each operator $Q(w)-c\Id$ ($w\in W$) with domain $D$ is the generator of a contraction semigroup on $X$.
In particular, the semigroups generated by $Q(w)$ are uniformly quasi-contractive, that is, $\|\ee^{\tau Q(w)}\|_{\Ell(X)}\le\ee^{\tau c}$ for all $\tau >0$ and $w\in W$.
\end{lemma}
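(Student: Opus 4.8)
The plan is to prove the statement pointwise in $w\in W$ by splitting $Q(w)-c\Id=Q_0+\widehat{Q}(w)$ into the generator $Q_0$ of a contraction semigroup and a bounded dissipative perturbation $\widehat{Q}(w)$, and then to combine the two at the level of the semigroups. First I fix $w\in W$ and note that, by Assumption \eqref{Q}, $\widehat{Q}(w)\in\Ell(X)$ is bounded and dissipative. Since $\lambda-\widehat{Q}(w)$ is boundedly invertible for every $\lambda>\|\widehat{Q}(w)\|$ by a Neumann series, it is in particular surjective, so the Lumer--Phillips theorem (cf.~\cite{Engel-Nagel}) applies and shows that $\widehat{Q}(w)$ generates a contraction semigroup; that is, $\|\ee^{t\widehat{Q}(w)}\|\le 1$ for all $t\ge0$. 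By hypothesis $Q_0$ generates a contraction semigroup as well, so $\|\ee^{tQ_0}\|\le1$.

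Next, because $\widehat{Q}(w)$ is bounded, the bounded perturbation theorem (cf.~\cite{Engel-Nagel}) guarantees that $(Q_0+\widehat{Q}(w),D)$ generates a strongly continuous semigroup. To upgrade this to a contraction estimate I would invoke the Lie--Trotter product formula, which is valid here precisely because the perturbation is bounded:
\[
\ee^{t(Q_0+\widehat{Q}(w))}x=\lim_{n\to\infty}\big(\ee^{\frac tn Q_0}\,\ee^{\frac tn\widehat{Q}(w)}\big)^n x,\qquad x\in X,\ t\ge0.
\]
Each of the two factors is a contraction by the previous step, hence so is every finite product appearing on the right; passing to the limit yields $\|\ee^{t(Q_0+\widehat{Q}(w))}\|\le1$. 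This establishes the first assertion, namely that $Q(w)-c\Id=Q_0+\widehat{Q}(w)$ generates a contraction semigroup. Finally, adding the scalar multiple $c\Id$ merely rescales the semigroup, $\ee^{tQ(w)}=\ee^{ct}\,\ee^{t(Q_0+\widehat{Q}(w))}$, so that $\|\ee^{\tau Q(w)}\|_{\Ell(X)}\le\ee^{\tau c}$ for all $\tau>0$; since $c\ge0$ is one fixed constant, independent of $w$, this bound is uniform over $w\in W$, which is exactly the claimed uniform quasi-contractivity.

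The one genuinely delicate point is the contraction (rather than merely quasi-contraction) bound for the sum. The naive approach through the resolvent gives only $\|R(\lambda,Q_0+\widehat{Q}(w))\|\le(\lambda-\|\widehat{Q}(w)\|)^{-1}$, which loses the benefit of dissipativity and yields at best $\|\ee^{t(Q_0+\widehat{Q}(w))}\|\le\ee^{\|\widehat{Q}(w)\|\,t}$; moreover one cannot conclude that $Q_0+\widehat{Q}(w)$ is dissipative just from $Q_0$ and $\widehat{Q}(w)$ being dissipative, since dissipativity need not be preserved under sums in a general Banach space (the tangent functionals realising it for the two summands need not coincide). The product-formula argument circumvents this entirely by working with the semigroups, where contractivity is obviously stable under composition and strong limits. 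Alternatively one could cite a ready-made dissipative bounded-perturbation theorem for contraction semigroups, but the Trotter route keeps the argument self-contained.
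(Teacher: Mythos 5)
Your proof is correct, but it takes a different route from the paper. The paper disposes of the lemma in two lines by citing the dissipative perturbation theorem \cite[Thm.~III.2.7]{Engel-Nagel}: since $\widehat{Q}(w)$ is dissipative and, being bounded, has $Q_0$-bound $0<1$, the sum $Q_0+\widehat{Q}(w)$ generates a contraction semigroup, and the rescaling by $c\Id$ gives the quasi-contractive bound. You instead first show that $\widehat{Q}(w)$ itself generates a contraction semigroup via Lumer--Phillips, invoke the bounded perturbation theorem for generation of the sum, and then obtain the contraction estimate from the Trotter product formula. This is legitimate: with both factors contractive the stability condition $\|(\ee^{\frac tn Q_0}\ee^{\frac tn\widehat{Q}(w)})^n\|\le 1$ holds, and $(Q_0+\widehat{Q}(w),D)$ is already known to be a generator, so the hypotheses of the Trotter/Chernoff formula (cf.~\cite[Cor.~III.5.8]{Engel-Nagel}) are met and the limit of contractions is a contraction. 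Your route is more self-contained but longer; the paper's citation is the standard shortcut. One small inaccuracy in your closing discussion: the claim that one cannot deduce dissipativity of $Q_0+\widehat{Q}(w)$ from that of the summands is overly pessimistic here. A generator of a contraction semigroup is dissipative with respect to \emph{every} element of the duality set $J(x)$ (cf.~\cite[Prop.~II.3.23]{Engel-Nagel}), so one may pick the tangent functional witnessing dissipativity of $\widehat{Q}(w)$ and it automatically works for $Q_0$ as well; combined with the range condition supplied by the bounded perturbation theorem, Lumer--Phillips then applies directly to the sum. That is essentially the content of the theorem the paper cites, and it would let you skip the product formula entirely.
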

\begin{proof}
The operators $\widehat Q(w)$ are by assumption dissipative, and with $Q_0$-bound 0 (as they are bounded operators). Thus the claims follow directly from \citet[Thm.~III.2.7]{Engel-Nagel} and the usual rescaling argument. 
\end{proof}

\begin{lemma}\label{le:sector}
Under Assumptions \eqref{Q} and \eqref{analytic} each operator $Q(w)-c\Id$ ($w\in W$) with domain $D$ is the generator of an analytic contraction semigroup on $\Sigma_\alpha$. In particular, the uniform sectoriality required in Theorem \ref{thm:ostermann} is satisfied by the family $(Q(w))_{w\in W}$.
\end{lemma}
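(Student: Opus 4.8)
The plan is to reduce the claim to the behaviour of the \emph{rotated} generators and invoke the standard characterisation of analytic contraction semigroups: a densely defined operator $A$ generates a $C_0$-semigroup that extends to a bounded analytic semigroup of contractions on the open sector $\Sigma_\alpha$ if and only if $\ee^{\ii\phi}A$ generates a contraction semigroup for every $\phi$ with $|\phi|<\alpha$ (see, e.g., \cite[Thm.~II.4.6]{Engel-Nagel} and the surrounding discussion of bounded analytic semigroups). Writing $Q(w)-c\Id=Q_0+\widehat Q(w)$ on the common domain $D$, it therefore suffices to show that the operator $\ee^{\ii\phi}\big(Q_0+\widehat Q(w)\big)$ generates a contraction semigroup for each fixed $|\phi|<\alpha$ and each $w\in W$.

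First I would treat the unbounded part. By Assumption \eqref{analytic} the operator $Q_0$ generates an analytic contraction semigroup $(T(z))_{z\in\Sigma_\alpha}$. For a fixed $\phi\in(-\alpha,\alpha)$ the one-parameter family $t\mapsto T(t\ee^{\ii\phi})$ ($t\ge 0$) is then a strongly continuous contraction semigroup, and differentiating along the ray shows that its generator is exactly $\ee^{\ii\phi}Q_0$ with domain $D$; in particular $\ee^{\ii\phi}Q_0$ is densely defined, dissipative, and $\lambda-\ee^{\ii\phi}Q_0$ is surjective for $\lambda>0$. Next I would add the bounded part: by Assumption \eqref{analytic} the bounded operator $\ee^{\ii\phi}\widehat Q(w)$ is dissipative for every $|\phi|<\alpha$ and $w\in W$. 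Hence
\[
\ee^{\ii\phi}\big(Q_0+\widehat Q(w)\big)=\ee^{\ii\phi}Q_0+\ee^{\ii\phi}\widehat Q(w)
\]
is, on the domain $D$, the sum of the generator of a contraction semigroup and a bounded dissipative operator. Arguing exactly as in the proof of Lemma \ref{le:contraction}, the bounded dissipative perturbation result \cite[Thm.~III.2.7]{Engel-Nagel} shows that this sum again generates a contraction semigroup. Since $\phi\in(-\alpha,\alpha)$ was arbitrary, the rotation characterisation yields that $Q_0+\widehat Q(w)=Q(w)-c\Id$ generates an analytic contraction semigroup on $\Sigma_\alpha$.

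Finally, for the uniform sectoriality demanded in Theorem \ref{thm:ostermann} I would observe that the sector angle $\alpha$ and the contractivity bound ($M=1$) are the \emph{same} for every $w\in W$, since they depend only on $Q_0$ and on the dissipativity of the rotated perturbations, not on the particular $w$. Consequently the resolvents $\big(\lambda-(Q(w)-c\Id)\big)^{-1}$ obey the sectorial resolvent estimate on $\Sigma_\alpha$ with a constant independent of $w$, and adding back $c\Id$ merely translates the vertex of the sector; this is precisely uniform sectoriality of the family $(Q(w))_{w\in W}$.

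The only genuinely delicate point is the interplay between the rotation characterisation and the perturbation step: one must ensure that, after rotating by $\ee^{\ii\phi}$, the bounded part is \emph{still} dissipative so that the contraction property survives the perturbation for every $|\phi|<\alpha$ simultaneously. This is exactly why Assumption \eqref{analytic} is stated in terms of all the rotated operators $\ee^{\ii\phi}\widehat Q(w)$ being dissipative rather than merely $\widehat Q(w)$ itself, and it is the hypothesis that makes the whole argument go through.
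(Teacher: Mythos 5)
Your proposal is correct and follows essentially the same route as the paper: both proofs reduce the claim to showing that each rotated operator $\ee^{\ii\phi}(Q(w)-c\Id)=\ee^{\ii\phi}Q_0+\ee^{\ii\phi}\widehat Q(w)$ generates a contraction semigroup via the bounded dissipative perturbation theorem \cite[Thm.~III.2.7]{Engel-Nagel}, which is exactly where Assumption \eqref{analytic} enters. The only (cosmetic) difference is the final step: you conclude analyticity on $\Sigma_\alpha$ directly from the rotation characterisation of bounded analytic contraction semigroups, whereas the paper combines the semigroup property with the analytic perturbation theorem \cite[Thm.~III.2.14]{Engel-Nagel}; both are valid and yield the same uniformity in $w\in W$.
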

\begin{proof}
The assumptions imply that for any $|\phi|<\alpha$ the operator $\ee^{\ii\phi}Q_0$ generates a contraction semigroup and $\ee^{\ii\phi}\widehat{Q}(w)$ is dissipative with $Q_0$-bound 0, hence each $\ee^{\ii\phi}(Q(w)-c\Id)$ generates a contraction semigroup as well by \citet[Thm.~III.2.7]{Engel-Nagel}. To see that the semigroup $(S_w(\tau))_{\tau\in\Sigma_\alpha}$ generated by $Q(w)-c\Id$ is analytic on $\Sigma_\alpha$, we use the semigroup property and the fact that \citet[Thm.~III.2.14]{Engel-Nagel} implies that $(Q_0-\veps \Id)+(\widetilde{Q}(w)+\veps\Id)$ generates an analytic semigroup on some sector $\Sigma_{\alpha'}$ with $\alpha'>0$.
\end{proof}

\begin{lemma}\label{le:C1}
Suppose Assumptions \eqref{Q}, \hyl{smooth} and \hyl{ass:0} hold, and let 
 $\varphi\colon[-\delta,0]\to X$ be continuously differentiable. Further assume $\varphi$ satisfies the boundary condition \eqref{eq:C1} and $\varphi([-\delta,0])\subset W$. Then there exists a unique solution $v:=v_\varphi\colon[-\delta,\epsilon]\to X$ to \eqref{eq:delay}, and we have $v\in \C^1([-\delta,\epsilon],X)$ and $v(t)\in D$ for all $t\in[0,\epsilon]$.
\end{lemma}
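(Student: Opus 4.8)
The plan is to exploit the minimal-delay structure: on the first window $[0,\epsilon]$ the operator $A(t)=Q(F(\prescript{}{t}u))$ is in fact completely determined by the history $\varphi$, so \eqref{eq:ncp} restricted to $[0,\epsilon]$ is a genuine nonautonomous Cauchy problem with known, sufficiently regular coefficients, to which Corollary \ref{cor:wellposed} applies. The function $v$ is then obtained by gluing $\varphi$ on $[-\delta,0]$ to the resulting solution on $[0,\epsilon]$, the boundary condition \eqref{eq:C1} being exactly what guarantees $\C^1$-smoothness across the junction $t=0$.

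First I would make the decoupling precise. For $t\in[0,\epsilon]$ and $s\in[-\delta,-\epsilon]$ one has $t+s\in[-\delta,0]$, so $F(\prescript{}{t}u)$ depends only on $\varphi|_{[t-\delta,t-\epsilon]}$. Concretely, writing $\eta_t:=(s\mapsto\varphi(t+s))\in\C([-\delta,-\epsilon],X)$ and fixing any bounded linear extension operator $E\colon\C([-\delta,-\epsilon],X)\to\C([-\delta,0],X)$ (e.g.\ constant continuation on $[-\epsilon,0]$), I set $w(t):=F(E\eta_t)$; by Assumption \hyl{ass:0} this is independent of the choice of $E$ and equals $F(\prescript{}{t}u)$. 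Since $\varphi\in\C^1$, a mean-value estimate together with the uniform continuity of $\dot\varphi$ on the compact interval $[-\delta,0]$ shows that $t\mapsto\eta_t$ is of class $\C^1$ from $[0,\epsilon]$ into $\C([-\delta,-\epsilon],X)$, with derivative $s\mapsto\dot\varphi(t+s)$; composing with the $\C^1$ map $\widetilde F:=F\circ E$ then yields $w\in\C^1([0,\epsilon],X)$. Moreover, as $\varphi([-\delta,0])\subset W$, the values $\eta_t(s)$ lie in $W$, so by the $F$-invariance of $W$ (Assumption \eqref{ass:3}) we obtain $w(t)\in W$ for all $t\in[0,\epsilon]$.

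Next I would check the hypotheses of Corollary \ref{cor:wellposed} for the family $A(t):=Q(w(t))=Q_0+\Q(w(t))$, $t\in[0,\epsilon]$. Each $A(t)$ has the common domain $D$ (a bounded perturbation of $Q_0$), and, because $w(t)\in W$, Lemma \ref{le:contraction} shows that these operators generate uniformly quasi-contractive semigroups, $\|\ee^{\sigma A(t)}\|\le\ee^{\sigma c}$. The map $t\mapsto A(t)x=Q_0x+\Q(w(t))x$ is $\C^1$ for every $x\in D$, since $\Q$ is $\C^1$ on $W$ (Assumption \hyl{smooth}) and $w\in\C^1$ takes values in $W$. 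Note that \eqref{eq:C1}, read as $\dot\varphi_-(0)=Q(F(\varphi))\varphi(0)=Q_0\varphi(0)+\Q(F(\varphi))\varphi(0)$, presupposes $\varphi(0)\in D$ for its right-hand side to define an element of $X$. Corollary \ref{cor:wellposed} then produces a unique classical solution $\widehat u\colon[0,\epsilon]\to X$ of $\dt\widehat u=A(t)\widehat u$ with $\widehat u(0)=\varphi(0)$, satisfying $\widehat u\in\C^1([0,\epsilon],X)$ and $\widehat u(t)\in D$ for all $t\in[0,\epsilon]$.

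Finally I would set $v:=\varphi$ on $[-\delta,0]$ and $v:=\widehat u$ on $[0,\epsilon]$; this is consistent at $t=0$ since $\widehat u(0)=\varphi(0)$, and because $F(\prescript{}{t}v)=w(t)$ for $t\in[0,\epsilon]$, the function $v$ solves \eqref{eq:delay} on $[-\delta,\epsilon]$, with $v(t)\in D$ on $[0,\epsilon]$. The one remaining point is $\C^1$-regularity across $t=0$: the right derivative is $\dot{\widehat u}_+(0)=A(0)\varphi(0)=Q(F(\varphi))\varphi(0)$ (using $\prescript{}{0}u=\varphi$), which coincides with the left derivative $\dot\varphi_-(0)$ precisely by \eqref{eq:C1}; hence $v\in\C^1([-\delta,\epsilon],X)$. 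Uniqueness is inherited from the uniqueness in Corollary \ref{cor:wellposed} together with the prescribed history. I expect the main technical obstacle to be the $\C^1$-in-$t$ regularity of $w$ as a map into $X$ — that is, rigorously differentiating $t\mapsto\prescript{}{t}u$ in $\C([-\delta,-\epsilon],X)$ and pushing this through $F$ — whereas the genuinely conceptual step is recognising that the minimal delay $\epsilon$ decouples $A(t)$ from the unknown on $[0,\epsilon]$.
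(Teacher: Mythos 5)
Your proposal is correct and follows essentially the same route as the paper: use the minimal delay $\epsilon$ together with Assumption \hyl{ass:0} to see that $A(t)=Q(F(\prescript{}{t}u))$ is determined by $\varphi$ on $[0,\epsilon]$, check the hypotheses of Corollary \ref{cor:wellposed} to solve the resulting nonautonomous Cauchy problem there, and glue with $\varphi$ at $t=0$ using \eqref{eq:C1}. The only cosmetic difference is that the paper realises the decoupling by extending $\varphi$ forward to $[0,\epsilon]$ as $\varphi(0)+\sigma\dot\varphi_-(0)$ so that $t\mapsto\prescript{}{t}\varphi$ is $\C^1$ into $\C([-\delta,0],X)$, rather than via your restriction-plus-extension operator $E$; your explicit verification that $w(t)\in W$ (needed both for uniform quasi-contractivity via Lemma \ref{le:contraction} and for applying Assumption \hyl{smooth}) spells out a point the paper leaves implicit.
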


\begin{proof}
Extend $\varphi$ in a continuously differentiable way to $[-\delta,\epsilon]$ by, say, $\varphi(\sigma):=\varphi(0)+\sigma\dot\varphi_{-}(0)$ for $\sigma\geq0$.\\
Let $J:=[0,\epsilon]$ and $A(t):=Q(F(\t\varphi))$. Note that $A(t)$ is then actually independent of the extension $\varphi|_{[0,\epsilon]}$ by Assumption \hyl{ass:0}. By Assumption \eqref{Q}, $D(A(t))=D$ for all $t\in J$, and
$
A(t)=Q_0+\widetilde{Q}(F(\t \varphi))
$. By our choice of extension for $\varphi$, $\t\varphi$ is continuously differentiable as a map $t\mapsto \t \varphi$ from $[0,\epsilon]$ to $\C([-\delta,0],X)$ (with derivative $\t{\dot\varphi}$). By Assumptions \hyl{smooth}, \hyl{ass:0}, we then have that $A(\cdot)x\in \C^1([0,\epsilon],X)$ for all $x\in D$. Since also $\varphi(0)\in D$, this allows us to apply Corollary \ref{cor:wellposed} to obtain that there exists a unique $v\colon J\to X$ with  $v(0)=\varphi(0)$ that solves the corresponding problem \eqref{eq:ncp}, Further, $v$ is continuously differentiable on $J$, and $v(t)\in D$ for all $t\in J$.  
By \eqref{eq:ncp} and \eqref{eq:C1}, we also have $v_{+}'(0)=\varphi_{-}'(0)$, so extending $v$ to $[-\delta,0]$ as $v=\varphi$ we preserve continuous differentiability.
\end{proof}

These results will allow us to apply Theorem \ref{thm:ostermann} in our setting. However, to obtain second-order convergence of the Magnus-type integrator, we need stronger smoothness properties than what we have shown above, as, among other things, the bounds on the RHS of \eqref{eq:rhs} are not automatically finite.

\begin{lemma}\label{le:C2}
Under the conditions of Lemma \ref{le:C1}, whenever
the solution $v:=v_\varphi$ to \eqref{eq:delay} is twice differentiable on $[0,\epsilon]$, we have $v(t)\in D(Q_0^2)\eqqcolon D_1$ for all $t\in [0,\epsilon]$,  $Q_0v(t)$ is differentiable on $[0,\epsilon]$ with derivative $Q_0\dot{v}(t)$, and the function $\ddot{v}(t)$ is continuous on $[0,\epsilon]$ if and only if $(Q(v(t-\delta)))^2v(t)$ (or, equivalently, $Q_0\dot{v}(t)$) is continuous.\\
On the other hand, whenever Assumptions \eqref{Q}, \hyl{smooth}, \eqref{dissip}, \eqref{ass:1} and \eqref{ass:4} hold,
we have $v\in \C^2([-\delta,\epsilon],X)\cap \C^1([-\delta,\epsilon],D)$.
\end{lemma}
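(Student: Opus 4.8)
The plan is to work on $[0,\epsilon]$, where by Assumption \hyl{ass:0} the operator is frozen to the history, $A(t)=Q(F(\t\varphi))=Q_0+B(t)$ with $B(t):=\widetilde{Q}(F(\t\varphi))\in\Ell(X)$. From \hyl{smooth}, \hyl{ass:0} and $\varphi\in\C^1$ one gets $t\mapsto F(\t\varphi)\in\C^1([0,\epsilon],X)$ and hence $B\in\C^1([0,\epsilon],\Ell(X))$; under the stronger Assumptions \eqref{ass:1} and \eqref{dissip} the same computation also yields $B\in\C^1([0,\epsilon],\Ell(D))$. Throughout I use that Lemma \ref{le:C1} already provides $v\in\C^1([-\delta,\epsilon],X)$ with $v(t)\in D$ and $\dot v(t)=Q_0v(t)+B(t)v(t)$ on $[0,\epsilon]$, and that $Q_0\colon D\to X$ is bounded with $0\in\varrho(Q_0)$.

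For the first part the engine is the closedness of $Q_0$. Since $v$ is assumed twice differentiable and $t\mapsto B(t)v(t)$ is $\C^1$, the identity $Q_0v(t)=\dot v(t)-B(t)v(t)$ shows that $t\mapsto Q_0 v(t)$ is differentiable. Feeding the difference quotients $\tfrac1h(v(t+h)-v(t))\to\dot v(t)$ and $Q_0\tfrac1h(v(t+h)-v(t))=\tfrac1h(Q_0v(t+h)-Q_0v(t))\to\dt Q_0v(t)$ into the closedness of $Q_0$ yields $\dot v(t)\in D$ together with $\dt Q_0v(t)=Q_0\dot v(t)$, which is the stated differentiability of $Q_0 v$. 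Now $A(t)v(t)=\dot v(t)\in D=D(A(t))$ means exactly $v(t)\in D(A(t)^2)$, and since the bounded part $B(t)$ leaves $D$ invariant this coincides with $D(Q_0^2)=D_1$. Finally, differentiating the evolution equation gives $\ddot v(t)=Q_0\dot v(t)+\dot B(t)v(t)+B(t)\dot v(t)$, while $A(t)^2v(t)=A(t)\dot v(t)=Q_0\dot v(t)+B(t)\dot v(t)$; as $\dot B(\cdot)v(\cdot)$ and $B(\cdot)\dot v(\cdot)$ are continuous, continuity of any one of $\ddot v$, $(Q(v(\cdot-\delta)))^2v(\cdot)$ and $Q_0\dot v$ is equivalent to that of $Q_0\dot v$, which proves the equivalence.

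For the second part the idea is to re-run the well-posedness machinery on the smaller Banach space $(D,\|\cdot\|_D)$ and transport the regularity back to $X$. First one checks that the part of $Q_0$ in $D$ (with domain $D_1$) generates a contraction semigroup on $(D,\|\cdot\|_D)$: the semigroup $(\ee^{tQ_0})$ leaves $D$ invariant, commutes with $Q_0$, and satisfies $\|\ee^{tQ_0}x\|_D\le\|x\|_D$, so the argument of Lemma \ref{le:contraction} applies verbatim on $D$ once \eqref{dissip} guarantees that each $\widehat{Q}(w)|_D$ ($w\in W\cap D$) is bounded and dissipative on $D$. Together with \eqref{ass:1} (which gives $F(\t\varphi)\in D$ and $t\mapsto F(\t\varphi)\in\C^1([0,\epsilon],D)$, hence $A(\cdot)x\in\C^1([0,\epsilon],D)$ for $x\in D_1$) and the fact that $\varphi(0)\in D_1$ (implicit in the right-hand side of \eqref{eq:C2} being well-defined), Corollary \ref{cor:wellposed} applies on $D$ and produces a solution $\tilde v\in\C^1([0,\epsilon],D)$ with $\tilde v(t)\in D_1$ and $\tilde v(0)=\varphi(0)$. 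Viewed in $X$ this $\tilde v$ is a classical solution, so by the uniqueness in Lemma \ref{le:C1} it equals $v$; hence $v\in\C^1([0,\epsilon],D)$ with $v(t)\in D_1$. Then in $X$ we have $\dot v=Q_0v+B(\cdot)v$ with $Q_0v\in\C^1([0,\epsilon],X)$ (since $v\in\C^1([0,\epsilon],D)$ and $Q_0\in\Ell(D,X)$, with derivative $Q_0\dot v$) and $B(\cdot)v\in\C^1([0,\epsilon],X)$, so $\dot v\in\C^1([0,\epsilon],X)$, i.e. $v\in\C^2([0,\epsilon],X)$. Matching the one-sided derivatives at $0$ via \eqref{eq:C1} for the first and \eqref{eq:C2} for the second derivative (the latter reproducing $\ddot v_+(0)=(\widetilde{Q}'(F(\varphi))F'(\varphi)\dot\varphi)\varphi(0)+Q(F(\varphi))^2\varphi(0)=\ddot\varphi_-(0)$) extends both regularity statements across $0$ to all of $[-\delta,\epsilon]$.

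The main obstacle is the second part's change of Banach space: one must verify carefully that $Q_0$ still generates a contraction semigroup on $(D,\|\cdot\|_D)$ and that the bounded perturbation and $F$ retain the dissipativity and $\C^1$ properties there, so that Corollary \ref{cor:wellposed} is genuinely applicable on $D$. This is exactly the place where Assumptions \eqref{dissip} and \eqref{ass:1} are indispensable, and it is also what underlies the invariance of $D$ under $B(t)$ and the identification $D(A(t)^2)=D(Q_0^2)$ used in the first part.
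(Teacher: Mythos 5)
Your proposal is correct and follows essentially the same route as the paper: the first part rests on the closedness of $Q_0$ applied to the difference quotients of $v$ (the paper splits the quotient of $\dot v$ into four terms, you equivalently work with $Q_0v=\dot v-B(\cdot)v$), and the second part reruns the well-posedness argument of Lemma \ref{le:C1} on $(D,\|\cdot\|_D)$ via the part of $Q_0$ in $D$ (the paper cites \cite[Prop.~II.5.2]{Engel-Nagel} for this restriction) before matching second derivatives at $0$ through \eqref{eq:C2}. You are in fact slightly more explicit than the paper on the need for $\varphi(0)\in D_1$ when solving in $D$, which is a welcome clarification rather than a deviation.
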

\begin{proof}

By Lemma \ref{le:C1} the solution $v$ is continuously differentiable on $[-\delta,\epsilon]$, $v(t)\in D$ for all $t\in[0,\epsilon]$, and \eqref{eq:delay} is satisfied. Now for $t\in[0,\epsilon]$, we have for small enough $h$ (for $t=0$ and $t=\epsilon$ only with $h>0$ and $h<0$, respectively) that
\begin{align}\label{eq:quotient}
\nonumber \frac{\dot{v}(t+h)-\dot{v}(t)}{h}=&\frac{Q(F(\prescript{}{t+h}v))v(t+h)-Q(F(\t v))v(t)}{h}\\
\nonumber =&(Q(F(\prescript{}{t+h}v))-Q(F(\t v)))\frac{v(t+h)-v(t)}{h}\\
\nonumber &+Q_0\frac{v(t+h)-v(t)}{h}\\
\nonumber &+\widetilde{Q}(F(\t v))\frac{v(t+h)-v(t)}{h}\\
&+\frac{Q(F(\prescript{}{t+h}v))-Q(F(\t v))}{h}v(t).
\end{align}
In the first term of the last expression, $Q(F(\prescript{}{t+h}v))-Q(F(\t v))=\widetilde Q(F(\prescript{}{t+h}v))-\widetilde Q(F(\t v))\in\Ell(X)$ tends to zero in operator norm, and $v$ being a classical solution implies that $\frac{v(t+h)-v(t)}{h}$ converges as well, so the first term tends to 0 as $h\to 0$. Similarly, the fourth term converges to
\[
\left(\ds \widetilde{Q}(F(\prescript{}{s}v))|_{s=t}\right) v(t)=\left(\widetilde{Q}'(F(\prescript{}{t}v))F'(\prescript{}{t}v)\t{\dot v}\right)v(t),
\] and the third term to $\widetilde{Q}(F(\t v))\dot{v}(t)$. Note that these are all continuous in $t$. Finally, since the left-hand side has a limit as $h\to 0$, by closedness of $Q_0$ one has $\lim_{h\to 0}Q_0\frac{v(t+h)-v(t)}{h}=Q_0\dot{v}(t)$.\\
Also $(Q(F(\t v)))^2v(t)=Q_0\dot{v}(t)+\widetilde{Q}(F(\t v))\dot{v}(t)$, where $\widetilde{Q}(F(\t v))\dot{v}(t)$ is continuous, so continuity of $\ddot{v}(t)$ and $(Q(F(\t v)))^2v(t)$ on $[0,\epsilon]$ are both equivalent to continuity on $[0,\epsilon]$ of $Q_0\dot{v}(t)$.

For the last claim, in light of \citet[Prop. II.5.2]{Engel-Nagel}, Assumptions \eqref{dissip} and \eqref{ass:1} imply that Lemmas \ref{le:contraction} and \ref{le:C1} actually apply also when replacing the space $X$ by $D$, and each generator by its restriction. In addition, $D$ being a subspace of $X$ with a stronger norm implies that whenever $\dot\varphi(t)$ ($t\in[-\delta,0]$) or $\dot{v}(t)$ ($t\geq0$) exists in both $X$ and $D$, the derivatives actually coincide.
Hence $v\in \C^1([-\delta,\epsilon],D)$, which implies that $Q_0v\in \C^1([-\delta,\epsilon],X)$, meaning that all four terms in equation \eqref{eq:quotient} above have $\|\cdot\|_X$-continuous limits, i.e., $\ddot{v}(t)\in \C([0,\epsilon],X)$.
Finally, the condition \eqref{eq:C2} on $\ddot\varphi_{-}(0)$ serves to match the left and right second derivatives of the solution $v\colon[-\delta,\epsilon]\to X$ at 0, so $v\in \C^2([-\delta,\epsilon],X)$.
\end{proof}

As a consequence of the above, applied to the appropriate initial history functions, we will be able to guarantee finiteness of the RHS in \eqref{eq:rhs}. More specifically, we have the following result.

\begin{proposition}\label{prop:g}
Suppose Assumptions \eqref{Q}, \eqref{smooth2}, \eqref{dissip}, \eqref{ass:1} and \eqref{ass:4} hold, and $\varphi\in \C^2([-\delta,0],X)\cap \C^1([-\delta,0],D)$ satisfies the boundary conditions \eqref{eq:C1} and \eqref{eq:C2}. Let $v$ denote the unique solution to \eqref{eq:delay} guaranteed by Lemma \ref{le:C1}. Then for any $\tau>0$ and $s\in[0,\epsilon-\tau]$, the functions $\gamma_s(t):=\gamma_{\varphi,s}:~[s,s+\tau]\to X$ defined as
\begin{equation*}
\gamma_s(t):=\big(Q(F(\prescript{}{s}v))-Q(F(\prescript{}{s+\tau/2}v))\big)v(t)
\end{equation*}
satisfy $\gamma_s(t)\in D$ for all $t\in[s,s+\tau]$, and $\gamma_s\in \C^2([s,s+\tau],X)\cap \C^1([s,s+\tau],D)$.
\end{proposition}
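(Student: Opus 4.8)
The plan rests on one structural observation: although $\gamma_s$ looks like a product of two $t$-dependent factors, the first factor does \emph{not} depend on $t$ -- only the fixed history functions $\prescript{}{s}v$ and $\prescript{}{s+\tau/2}v$ enter it. Writing $Q(w)=Q_0+\widehat{Q}(w)+c\Id$ as in Assumption \eqref{Q}, the unbounded part $Q_0$ and the scalar $c\Id$ cancel in the difference, so that with
\[
B:=Q(F(\prescript{}{s}v))-Q(F(\prescript{}{s+\tau/2}v))=\widehat{Q}(F(\prescript{}{s}v))-\widehat{Q}(F(\prescript{}{s+\tau/2}v))
\]
we simply have $\gamma_s(t)=B\,v(t)$ with $B$ a \emph{fixed} operator. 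The whole statement then reduces to showing that $B$ is a bounded operator on both $X$ and $D$ that leaves $D$ invariant, and that $v$ carries the advertised regularity; the regularity of $\gamma_s$ will be inherited from that of $v$, since applying a fixed bounded operator commutes with difference quotients and their limits.

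First I would verify that $w_1:=F(\prescript{}{s}v)$ and $w_2:=F(\prescript{}{s+\tau/2}v)$ lie in $W\cap D$. For membership in $W$: since $F$ depends only on the restriction to $[-\delta,-\epsilon]$ and $s,\,s+\tau/2<\epsilon$, the relevant time-arguments $s+\sigma$ (resp.\ $s+\tau/2+\sigma$) with $\sigma\in[-\delta,-\epsilon]$ all lie in $[-\delta,0)$, where $v=\varphi$ takes values in $W$ by Assumption \eqref{ass:4}; hence $w_1,w_2\in W$ by Assumption \eqref{ass:3}. For membership in $D$: Lemma \ref{le:C2} -- whose hypotheses follow from ours, as \eqref{smooth2} strengthens \hyl{smooth} -- gives $v\in\C^1([-\delta,\epsilon],D)\subset\C([-\delta,\epsilon],D)$, so the history maps $\prescript{}{s}v$ and $\prescript{}{s+\tau/2}v$ belong to $\C([-\delta,0],D)$; Assumption \eqref{ass:1} then yields $w_1,w_2\in D$.

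With $w_1,w_2\in W\cap D$ in hand, Assumption \eqref{dissip} tells us that each $\widehat{Q}(w_i)$ leaves $D$ invariant and is bounded on $(D,\|\cdot\|_D)$, while Assumption \eqref{Q} gives $\widehat{Q}(w_i)\in\Ell(X)$; consequently $B=\widehat{Q}(w_1)-\widehat{Q}(w_2)\in\Ell(X)\cap\Ell(D)$ with $B(D)\subseteq D$. It then remains to assemble the conclusions. The same application of Lemma \ref{le:C2} already noted gives $v\in\C^2([-\delta,\epsilon],X)\cap\C^1([-\delta,\epsilon],D)$ and $v(t)\in D$ for $t\in[s,s+\tau]\subseteq[0,\epsilon]$. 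Hence $\gamma_s(t)=B\,v(t)\in D$ because $B$ preserves $D$; boundedness of $B$ on $X$ gives $\gamma_s^{(k)}(t)=B\,v^{(k)}(t)$ for $k=0,1,2$ and thus $\gamma_s\in\C^2([s,s+\tau],X)$; and boundedness of $B$ on $D$ likewise gives $\gamma_s\in\C^1([s,s+\tau],D)$.

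The only genuine work, and the main point to get right, is the membership $w_1,w_2\in W\cap D$. The $W$-part is a matter of tracking that the $[-\delta,-\epsilon]$-history window stays inside $[-\delta,0)$, so that $F$ only sees $\varphi$-values, which are known to lie in $W$; the $D$-part hinges on propagating the $D$-valued $\C^1$-regularity of $v$ supplied by Lemma \ref{le:C2} through $F$ by means of Assumption \eqref{ass:1}. Everything downstream is routine, precisely because $B$ is constant in $t$ and bounded on both $X$ and $D$.
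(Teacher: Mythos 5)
Your proposal is correct and follows essentially the same route as the paper: the unbounded part $Q_0$ (and the scalar $c\Id$) cancels in the difference, leaving a fixed bounded operator applied to $v(t)$, after which Lemma \ref{le:C2} supplies the regularity of $v$ and Assumptions \eqref{Q}, \eqref{dissip}, \eqref{ass:1} supply boundedness and $D$-invariance of that operator. Your explicit verification that $F(\prescript{}{s}v)$ and $F(\prescript{}{s+\tau/2}v)$ lie in $W\cap D$ is a welcome elaboration of a step the paper leaves implicit.
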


\begin{proof}
We have
\begin{align*}
\big(Q(F(\prescript{}{s}v))-Q(F(\prescript{}{s+\tau/2} v))\big)v(t)&=\big(\Q(F(\prescript{}{s}v))-\Q(F(\prescript{}{s+\tau/2} v))\big)v(t)\\
&=\left(\Q(F(\prescript{}{s}v))\right)v(t)-\left(\Q(F(\prescript{}{s+\tau/2} v))\right)v(t).
\end{align*}
By Lemma \ref{le:C2} we have $v\in \C^2([-\delta,\epsilon],X)\cap \C^1([-\delta,\epsilon],D)$, and together with Assumptions \eqref{smooth2}, \eqref{dissip} and \eqref{ass:1}, we are done.
\end{proof}

We note here that if $W=X$, all of the above results would automatically extend to the interval $[-\delta,\infty)$ by iteration, as the solution obviously stays in $X$. However, in many applications some of the assumptions may only be known for certain special subsets $W$ of $X$. A typical hurdle would be that no choice of $c$ will render all of the operators $\widehat{Q}(w):=Q(w)-c\Id$ generators of contraction semigroups. What may happen instead is that $X$ is some function space, and the Assumptions are only valid for the positive cone $W:=X_+$, or more generally, for sets of functions $W$ that are uniformly bounded below, and this is exactly the case in our example from Section \ref{sec:exm}. \medskip

In such cases, the hope is that the set $W$ is in some way ``closed'' or ``invariant'' under both \eqref{eq:delay} and the numerical method so the argument can be iterated with the Assumptions still satisfied. This double invariance is not unheard of, as in the case of PDE's, positivity of the solution semigroup and positivity-preserving numerical methods are a well-studied area.\\
For the invariance of $W$ under \eqref{eq:delay}, we shall actually make use of the convergence of the Magnus-type integrator and its $W$-preserving property, in a way somewhat reminiscent of how positivity of operator semigroups is shown via the Post--Widder Inversion Formula \citep[cf.][Corollary III.5.5]{Engel-Nagel}.

In what follows we will use the following notations for any $T>0$ for which the solution $u=u_\varphi$ is known to exist as a (twice) differentiable function $[-\delta,T]\to X$:
\begin{enumA}
\item $M_{u,T}:=\|u\|_{\C([-\delta,T],X)}$,
\item $L_{u,T}$ and $L_Q$ denote the Lipschitz constants of the functions $u$ and $\Q|_{u([-\delta,T])}$, respectively, that is, $\|u(t)-u(s)\|\le L_{u,T}|t-s|$ for all $T\geq t,s\ge -\delta$, and $\|\Q(w_1)-\Q(w_2)\|\le L_Q\|w_1-w_2\|$ for all $w_1,w_2\in u([-\delta,T])$.
\end{enumA}
We state now our main result.
\begin{theorem}\label{thm:main} Let $X$ be a Banach space, $m\geq 0$ an arbitrary integer, $\delta>0$ and $\epsilon\in(0,\delta]$.
Suppose further that Assumptions \eqref{Q}--\eqref{ass:4} are satisfied.
Then all of the following hold:
\begin{enumA}
\item\label{W1} the unique solution $u=u_\varphi$ to \eqref{eq:delay} exists for all $t\in[-\delta,m\epsilon]$ and $u\in \C^2([-\delta,m\epsilon],X)\cap \C^1([-\delta,m\epsilon],D)$,
\item\label{W2} $u(t)\in W$ for all $t\in[-\delta,m\epsilon]$,
\item\label{W3} the Magnus-type integrator \eqref{eq:magnus} applied to \eqref{eq:delay} never leaves the set $W$ on $[-\delta,m\epsilon]$, i.e., for $(n+1)\tau\leq m\epsilon$, we have $u^{(\tau)}_{n+1}\in W$,
\item\label{W4} the Magnus-type integrator is convergent of second order on $[-\delta,m\epsilon]$, i.e., there exists a constant $\alpha_m\geq0$ independent of $n$ and $\tau$ such that
\begin{equation}\label{eq:rec_bound}
\|u((n+1)\tau)-u_{n+1}^{(\tau)}\|\le \alpha_m\tau^2
\end{equation}
holds for all $n\in\NN$ with $(n+1)\tau\in[0,m\epsilon]$.
\end{enumA}
\end{theorem}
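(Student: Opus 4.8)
The plan is to prove all four claims \ref{W1}--\ref{W4} simultaneously by induction on $m$. The case $m=0$ is immediate: on $[-\delta,0]$ the solution is the prescribed history $\varphi$, which lies in $W$ and has the required regularity by \eqref{ass:4}, while the integrator \eqref{eq:magnus} reproduces $\varphi$ exactly on the grid, so it stays in $W$ and the error vanishes. For the inductive step I assume \ref{W1}--\ref{W4} on $[-\delta,m\epsilon]$ and extend everything to $[-\delta,(m+1)\epsilon]$. The crucial structural observation is that, since $F$ depends only on the restriction to $[-\delta,-\epsilon]$, for $t\in[m\epsilon,(m+1)\epsilon]$ the operator $A(t)=Q(F(\prescript{}{t}u))$ is determined by $u|_{[t-\delta,t-\epsilon]}\subset u|_{[-\delta,m\epsilon]}$, which is already known and, by the induction hypothesis \ref{W2}, takes values in $W$. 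Hence on this new slab \eqref{eq:ncp} is a genuine (a posteriori) nonautonomous problem whose generators all have argument in $W$, so Assumptions \eqref{Q}, \eqref{dissip} and \eqref{analytic} apply to them.

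For \ref{W1} I would shift time by $m\epsilon$ and run Lemmas \ref{le:C1} and \ref{le:C2} with the new history $\psi_m:=\prescript{}{m\epsilon}u$. By the induction hypothesis $\psi_m\in\C^2([-\delta,0],X)\cap\C^1([-\delta,0],D)$ and $\psi_m([-\delta,0])\subset W$, and since $u$ is already a $\C^2$ solution up to $m\epsilon$, its one-sided derivatives at $m\epsilon$ satisfy exactly the compatibility relations \eqref{eq:C1}--\eqref{eq:C2} required of $\psi_m$. Lemma \ref{le:C1} then yields a unique solution on $[m\epsilon,(m+1)\epsilon]$ lying in $D$, and Lemma \ref{le:C2} upgrades this to $\C^2(\cdot,X)\cap\C^1(\cdot,D)$; the matching of first and second derivatives at $m\epsilon$ glues the pieces into a solution on all of $[-\delta,(m+1)\epsilon]$ with the claimed regularity.

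For \ref{W3} I note that every exponent appearing in \eqref{eq:magnus} is of the form $Q(w)$ with $w=\sum_\ell\kappa_{\ell,\tau}F_{\ell,\tau}(u_*^{(\tau)})$, whose inputs $u_*^{(\tau)}$ lie in $W$ by the induction hypothesis (for the auxiliary half-steps, either through the history or through earlier full steps). By \eqref{eq:Winv} we get $w\in W$, and the assumed invariance of $W$ under the semigroups $(\ee^{tQ(w)})_{t\ge0}$ ($w\in W$) keeps every iterate in $W$. For \ref{W4} I would first apply Theorem \ref{thm:ostermann} to the \emph{ideal} Magnus scheme \eqref{eq:magnusA} built from the exact operators $A(t)=Q(F(\prescript{}{t}u))$ along the true solution: Lemmas \ref{le:C1}, \ref{le:C2} and Proposition \ref{prop:g} guarantee that the quantities $\|g'\|_{D,\infty}$ and $\|g''\|_{X,\infty}$ on the right of \eqref{eq:rhs} are finite, yielding $\|u(n\tau)-\widehat u_n^{(\tau)}\|\le C\tau^2$ for the idealised method. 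It then remains to compare $\widehat u_n^{(\tau)}$ with the genuine iterate $u_n^{(\tau)}$. Writing the one-step difference as $\ee^{\tau Q(w)}u_n^{(\tau)}-\ee^{\tau Q(\widetilde w)}\widehat u_n^{(\tau)}$, the uniform quasi-contractivity of Lemma \ref{le:contraction} together with the perturbation identity $\ee^{\tau Q(w)}-\ee^{\tau Q(\widetilde w)}=\int_0^\tau\ee^{(\tau-\sigma)Q(w)}(\widehat Q(w)-\widehat Q(\widetilde w))\ee^{\sigma Q(\widetilde w)}\dd\sigma$ and the Lipschitz bound on $\Q$ give $\|\ee^{\tau Q(w)}-\ee^{\tau Q(\widetilde w)}\|\le C\tau\|w-\widetilde w\|$. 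The argument discrepancy $\|w-\widetilde w\|$ splits into the quadrature error \eqref{eq:kvadr_err} (of order $\tau^2$, hence $O(\tau^3)$ per step) and a term controlled, via \eqref{eq:weights} and the constant $L_F$ of \eqref{ass:2}, by the running maximum of the global and half-step errors. Feeding this into a discrete Grönwall inequality over the $O(1/\tau)$ steps of the slab closes the recursion, gives $\|\widehat u_n^{(\tau)}-u_n^{(\tau)}\|\le C\tau^2$, and the triangle inequality yields \eqref{eq:rec_bound} with a constant $\alpha_m$. Finally \ref{W2} follows for free: for $t\in[m\epsilon,(m+1)\epsilon]$ take $n=\lfloor t/\tau\rfloor$, so $u_n^{(\tau)}\in W$ by \ref{W3} and $u_n^{(\tau)}\to u(t)$ as $\tau=\delta/N\to0$ by \ref{W4} and continuity of $u$; since $W$ is closed, $u(t)\in W$.

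The main obstacle I anticipate is the bookkeeping in \ref{W4}. A naive summation of the local consistency error of Theorem \ref{thm:batkai} only gives first order, so the second order genuinely rests on the midpoint symmetry captured by Theorem \ref{thm:ostermann}, which forces us to propagate $D$-regularity of the solution and of the commutator-type functions $\gamma_s$ of Proposition \ref{prop:g}; this is where the smoothness Lemmas \ref{le:C1} and \ref{le:C2} do real work. The second delicate point is the coupling between the full-step iterates and the auxiliary half-step values $u_{n+1/2}^{(\tau)}$: each half-step value is only $O(\tau^2)$-accurate, which suffices precisely because it enters an exponent premultiplied by $\tau$, but since both errors feed into each other through the arguments of $Q$, the Grönwall recursion must be set up for a combined error quantity rather than for the full-step error alone, keeping all constants uniform in $\tau$ (they may, and do, grow with $m$, which is permitted).
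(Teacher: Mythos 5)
Your proposal is correct and follows essentially the same route as the paper's proof: induction over slabs of length $\epsilon$, Lemmas \ref{le:C1}--\ref{le:C2} and Proposition \ref{prop:g} feeding Theorem \ref{thm:ostermann} for the idealised scheme built from the exact operators, a perturbation/telescoping comparison with the actual scheme via uniform quasi-contractivity and the variation of constants formula, Theorem \ref{thm:batkai} for the auxiliary half-steps, and closedness of $W$ together with \eqref{W3}--\eqref{W4} for \eqref{W2}. The only (harmless) difference is that you set up a discrete Grönwall recursion for a combined error inside each slab, whereas the paper exploits the structural fact you yourself note at the outset -- the minimum delay $\epsilon$ forces every error entering an argument of $Q$ to refer to a time at least $\epsilon$ earlier, hence already controlled by the inductive constant $\alpha_m$ -- so that a plain telescopic sum over the $O(1/\tau)$ steps suffices and no intra-slab coupling actually occurs.
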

\begin{proof}
We shall proceed by induction on $m$. Indeed, let us assume that assertions \eqref{W1}--\eqref{W4} are valid for some integer $m\geq0$ and all $\tau>0$ with $\delta/\tau\in\mb{N}$. This is trivially true for $m=0$.\\
Now, when $t\leq (m+1)\epsilon$, the generator $A(t):=Q(F(\t u))$ only depends on the $\C^1$ function $u|_{[-\delta,m\epsilon]}$, and by the inductive assumption this runs in $W$.\\
On the one hand, this means that the Magnus-type integrator \eqref{eq:magnus} will indeed stay in $W$ at least until time $(m+1)\epsilon$, covering the inductive step for \eqref{W3}.\\
On the other hand, letting $\phi:=\prescript{}{m\epsilon}u$, this also means that $\phi\in\C^2([-\delta,0],X)\cap \C^1([-\delta,0],D)$ and $\phi$ satisfies the boundary conditions \eqref{eq:C1} and \eqref{eq:C2}. Hence Lemma \ref{le:C2} can be applied to the equation (QDE$_\phi$), implying that the solution $u$ can be uniquely extended to $[m\epsilon, (m+1)\epsilon]$ as $u|_{[m\epsilon, (m+1)\epsilon]}:=v_\phi$, and then $u\in\C^2([-\delta,(m+1)\epsilon],X)\cap\C^1([-\delta,(m+1)\epsilon],D)$, completing the inductive step for \eqref{W1}.\\
We now claim that the conditions of Theorem \ref{thm:ostermann} are satisfied for $T:=(m+1)\epsilon$ and $A(t):=Q(F(\t u))$, and the bound on the RHS of \eqref{eq:rhs} is finite. Indeed, the norm on $D$ is defined as the graph norm of $Q_0$, and bounded perturbations always lead to an equivalent graph norm, hence the graph norms of $A(t)$ are all equivalent to $\|\cdot\|_D$. The uniform sectoriality property follows from Lemma \ref{le:sector}. Next note that $A(t)=Q_0+\widetilde{Q}(F(\t v))$, and by \eqref{W1} and Assumptions \eqref{smooth2} and \eqref{ass:1} we have $\widetilde{Q}(F(\prescript{}{\cdot} u))\in \C^1([-\delta,(m+1)\epsilon],\Ell(X))$. Since the norm on $\Ell(X)$ dominates the norm on $\Ell(D,X)$, we a fortiori have $\widetilde{Q}(F(\prescript{}{\cdot}v))\in \C^1([-\delta,\epsilon],\Ell(D,X))$. Concerning $g_n$ in \eqref{eq:g}, note that for any $n$ such that $(n+1)\tau\in(\epsilon,(m+1)\epsilon]$, we have $g_n=\gamma_{\prescript{}{(n+1)\tau-\epsilon}u,\epsilon-\tau}$, which by Proposition \ref{prop:g} is in $\C^2([(n+1)\tau-\epsilon,(n+1)+\tau],X)\cap\C^1([(n+1)\tau-\epsilon,(n+1)+\tau],D)$, hence the RHS in \eqref{eq:rhs} is indeed finite.\\
Knowing this we are ready to show that the Magnus-type integrator maintains its second-order convergence up to $(m+1)\epsilon$. \medskip

Let us denote the global error of the method by
\begin{equation*}
\veps_n=\left\{\begin{array}{cl}
0 &\:\text{for } n=-2N,\dots,0, \\
\|u(n\tau)-u_n^{(\tau)}\| &\:\text{for } n=1,2,\dots.
\end{array}\right.
\end{equation*}
Hence, we need to estimate the term $\veps_{n+1}$ for $n\in\NN$ with $(n+1)\tau\in[0,(m+1)\epsilon]$.
The triangle inequality implies
\begin{equation}\label{eq:tri1}
\veps_{n+1}=\|u((n+1)\tau)-u_{n+1}^{(\tau)}\|\le\underbrace{\|u((n+1)\tau)-\widehat u_{n+1}^{(\tau)}\|}_{\hyt{\text{(T1)}}{t1}}+\underbrace{\|\widehat u_{n+1}^{(\tau)}-u_{n+1}^{(\tau)}\|}_{\hyt{\text{(T2)}}{t2}},
\end{equation}
where $\widehat u_{n+1}^{(\tau)}$ is defined in \eqref{eq:magnusA} with $A(t)=Q(F(\t u))$, i.e., with the exact solution $u$, as
\begin{equation*}
\widehat u_{n+1}^{(\tau)}=\ee^{\tau Q(F(\prescript{}{(n+1/2)\tau}u))}\widehat u_n^{(\tau)}.
\end{equation*}
The first term in \eqref{eq:tri1} is the global error of method \eqref{eq:magnusA} being of second order by Theorem \ref{thm:ostermann}:
\begin{equation}\label{eq:term1}
\hyl{t1}=\|u((n+1)\tau)-\widehat u_{n+1}^{(\tau)}\|\le C_0\tau^2
\end{equation}
with the finite constant $C_{0,m}:=C\left(\left\|g'|_{[-\delta,m\epsilon]}\right\|_{D,\infty}+\left\|g''|_{[-\delta,m\epsilon]}\right\|_{X,\infty}\right)$. \medskip

For bounding \hyl{t2}, we write the difference as a telescopic sum of differences where only one term of the product changes at a time. By Assumption \eqref{ass:3} and \eqref{W1} we may apply Lemma \ref{le:contraction}, and using $n\tau<(m+1)\epsilon$ write
\begin{align}
\nonumber\hyl{t2}=&\|\widehat u_{n+1}^{(\tau)}-u_{n+1}^{(\tau)}\|
= \Big\|\prod\limits_{k=0}^n\ee^{\tau Q(F(\prescript{}{(k+1/2)\tau }u))}\varphi(0)-\prod\limits_{k=0}^n\ee^{\tau 
Q\left(
\sum_{\ell=0}^{\lfloor\frac{\delta-\epsilon}{\tau}\rfloor}
\kappa_{\ell,\tau} F_{\ell,\tau}\left(u_{k+\ell+1/2}^{(\tau)}\right)
\right)}\varphi(0)\Big\|
\\
\nonumber\le&\left\|\sum\limits_{j=0}^n\Big(\prod\limits_{k=j+1}^n\ee^{\tau Q(F(\prescript{}{(k+1/2)\tau }u))}\Big)\cdot\left(\ee^{\tau Q(F(\prescript{}{(j+1/2)\tau }u))}-\ee^{\tau Q\left(
\sum_{\ell=0}^{\lfloor\frac{\delta-\epsilon}{\tau}\rfloor}
\kappa_{\ell,\tau} F_{\ell,\tau}\left(u_{j+\ell+1/2}^{(\tau)}\right)
\right)}\right)\right.\\
\nonumber&\left.\cdot\left(\prod\limits_{k=0}^{j-1}\ee^{\tau Q\left(
\sum_{\ell=0}^{\lfloor\frac{\delta-\epsilon}{\tau}\rfloor}
\kappa_{\ell,\tau} F_{\ell,\tau}\left(u_{k+\ell+1/2}^{(\tau)}\right)
\right)}\right)\varphi(0)\right\|
\\
\nonumber\le&\sum\limits_{j=0}^n\Big(\prod\limits_{k=j+1}^n\big\|\ee^{\tau Q(F(\prescript{}{(k+1/2)\tau }u))}\big\|\Big)\cdot\left\|\ee^{\tau Q(F(\prescript{}{(j+1/2)\tau }u))}-\ee^{\tau Q\left(
\sum_{\ell=0}^{\lfloor\frac{\delta-\epsilon}{\tau}\rfloor}
\kappa_{\ell,\tau} F_{\ell,\tau}\left(u_{j+\ell+1/2}^{(\tau)}\right)
\right)}\right\|\\
\nonumber&\cdot\left(\prod\limits_{k=0}^{j-1}\left\|\ee^{\tau Q\left(
\sum_{\ell=0}^{\lfloor\frac{\delta-\epsilon}{\tau}\rfloor}
\kappa_{\ell,\tau} F_{\ell,\tau}\left(u_{k+\ell+1/2}^{(\tau)}\right)
\right)}\right\|\|\varphi(0)\|\right)
\\
\le& \ee^{c(m+1)\epsilon}\sum\limits_{j=0}^n\underbrace{\left\|\ee^{\tau Q(F(\prescript{}{(j+1/2)\tau }u))}-\ee^{\tau Q\left(
\sum_{\ell=0}^{\lfloor\frac{\delta-\epsilon}{\tau}\rfloor}
\kappa_{\ell,\tau} F_{\ell,\tau}\left(u_{j+\ell+1/2}^{(\tau)}\right)
\right)}\right\|}
_{\hyt{\text{(T3$j$)}}{t3j}}
\|\varphi(0)\|\label{eq:term2}.
\end{align}
Now we use the variation of constants formula and Lipschitz continuity of $\widetilde{Q}$ to obtain (recall $\tau\leq(m+1)\epsilon$)
\begin{align}
\nonumber \hyl{t3j}=&
\left\| \int_0^\tau \ee^{(\tau-s)Q\left(\sum_{\ell=0}^{\lfloor\frac{\delta-\epsilon}{\tau}\rfloor}
\kappa_{\ell,\tau} F_{\ell,\tau}\left(u_{j+\ell+1/2}^{(\tau)}\right)\right)}\right.\\
&\cdot\left.
\left(
Q(F(\prescript{}{(j+1/2)\tau }u))-
Q\left(
          \sum_{\ell=0}^{\lfloor\frac{\delta-\epsilon}{\tau}\rfloor}
          \kappa_{\ell,\tau} F_{\ell,\tau}\left(u_{j+\ell+1/2}^{(\tau)}\right)
    \right)
\right)
\ee^{(\tau-s)
Q(F(\prescript{}{(j+1/2)\tau }u))}\dd s 
\right\| \nonumber 
\\
\le & \tau \ee^{2c(m+1)\epsilon}L_Q\underbrace{
\left\|F(\prescript{}{(j+1/2)\tau }u)-\sum_{\ell=0}^{\lfloor\frac{\delta-\epsilon}{\tau}\rfloor}
          \kappa_{\ell,\tau} F_{\ell,\tau}\left(u_{j+\ell+1/2}^{(\tau)}\right)
\right\|}_{\hyt{\text{(T4$j$)}}{t4j}}\label{eq:term3}.
\end{align}
Here we will have to insert an approximating term involving $F_\tau$ defined in \eqref{eq:kvadra} and use Assumption \eqref{ass:2}. We have
\begin{align}\label{eq:term4*}
\hyl{t4j}\leq 
\underbrace{\left\|
F(\prescript{}{(j+1/2)\tau }u)-
F_\tau(\prescript{}{(j+1/2)\tau }u)
\right\|}
_{\hyt{\text{(T4a$j$)}}{t4aj}}+
\underbrace{\left\|
F_\tau(\prescript{}{(j+1/2)\tau }u)-
\sum_{\ell=0}^{\lfloor\frac{\delta-\epsilon}{\tau}\rfloor}
          \kappa_{\ell,\tau} F_{\ell,\tau}\left(u_{j+\ell+1/2}^{(\tau)}\right)
          \right\|}
_{\hyt{\text{(T4b$j$)}}{t4bj}},
\end{align}
and by Assumption \eqref{ass:2} and inequality \eqref{eq:kvadr_err}, we have
\begin{equation}\label{eq:term4a}
\hyl{t4aj}\leq  \ms{C}\tau^2\|u\|_{\C^2([-\delta,m\epsilon],X)}.
\end{equation}
Now we turn our attention to the second term, and obtain by the triangle inequality and the uniform Lipschitz continuity of the $F_{*,\tau}$'s that
\begin{align}
\hyl{t4bj}&\leq
\sum_{\ell=0}^{\lfloor\frac{\delta-\epsilon}{\tau}\rfloor}
\left|\kappa_{\ell,\tau}\right|L_F
\left\|
u((j+\ell+1/2)\tau-\delta)-
u_{j+\ell+1/2}^{(\tau)}
          \right\|
\nonumber\\
&=\sum_{\ell=j}^{ j+\lfloor\frac{\delta-\epsilon}{\tau}\rfloor}
\left|\kappa_{\ell-j,\tau}\right|L_F
\underbrace{\left\|
u((\ell+1/2)\tau-\delta)-
u_{\ell+1/2}^{(\tau)}
          \right\|}
_{\hyt{\text{(T5$\ell$)}}{t5l}}.\label{eq:term4b}
\end{align}
Since $(n+1)\tau\leq(m+1)\epsilon$, we have up to now shown
\begin{align}\label{eq:est1}
\hyl{t2}\leq \tau C_{1,m}\left(
\ms{C}\tau(m+1)\epsilon\|u\|_{\C^2([-\delta,m\epsilon],X)}+
\sum_{j=0}^n 
\sum_{\ell=j}^{ j+\lfloor\frac{\delta-\epsilon}{\tau}\rfloor}
\left|\kappa_{\ell-j,\tau}\right|L_F
\hyl{t5l}
\right),
\end{align}
where $C_{1,m}:=\ee^{3c(m+1)\epsilon}L_Q\|\varphi(0)\|$.
So now we have to bound the norm \hyl{t5l}. Since both terms in the difference take the same value $\varphi((\ell+1/2)\tau-\delta)$, the norm \hyl{t5l} is equal to zero for $\ell=0,\dots,N-1$, hence, we only need to consider the indices $\ell\ge N$. 

By the inductive hypothesis there exists an $\alpha_m$ independent of $\tau$ such that for any index $q\geq -2N$ with $q\tau\leq m\epsilon$ we have $\veps_q\leq\alpha_m\tau^2$.
Recall that by the definitions of the evolution family $\U(\cdot,\cdot)$ and of our Magnus-type integrator
\begin{align*}
u((\ell+1/2)\tau-\delta)&=\U((\ell+1/2)\tau-\delta,\ell\tau-\delta)u(\ell\tau-\delta) \quad \text{and}\\
u_{\ell+1/2}^{(\tau)}&=\ee^{\frac\tau2Q\left(
\sum_{k=0}^{\lfloor\frac{\delta-\epsilon}{\tau}\rfloor}
\kappa_{k,\tau} F_{k,\tau}\left(u_{\ell-2N+k}^{(\tau)}\right)
\right)}u_{\ell-N}^{(\tau)}
\end{align*}
for all $\ell\ge N$, hence inserting the term $\ee^{\frac\tau2Q\left(
\sum_{k=0}^{\lfloor\frac{\delta-\epsilon}{\tau}\rfloor}
\kappa_{k,\tau} F_{k,\tau}\left(u_{\ell-2N+k}^{(\tau)}\right)
\right)}u(\ell\tau-\delta)$ and applying the triangle inequality leads to
\begin{align*}\
\hyl{t5l} &\le \underbrace{\left\|\U((\ell+1/2)\tau-\delta,\ell\tau-\delta)-\ee^{\frac\tau2Q\left(
\sum_{k=0}^{\lfloor\frac{\delta-\epsilon}{\tau}\rfloor}
\kappa_{k,\tau} F_{k,\tau}\left(u_{\ell-2N+k}^{(\tau)}\right)
\right) }\right\|}_{\hyt{\text{(T6$\ell$)}}{t6l}}\underbrace{\|u(\ell\tau-\delta)\|}_{\le M_{u,m\epsilon}} \\
&+\underbrace{\left\|\ee^{\frac\tau2Q\left(
\sum_{k=0}^{\lfloor\frac{\delta-\epsilon}{\tau}\rfloor}
\kappa_{k,\tau} F_{k,\tau}\left(u_{\ell-2N+k}^{(\tau)}\right)
\right)}\right\|}_{\le \ee^{c(m+1)\epsilon/2}}\underbrace{\|u(\ell\tau-\delta)-u_{\ell-N}^{(\tau)}\|}_{=\veps_{\ell-N}}\\
&\leq M_{u,m\epsilon}\hyl{t6l}+C_{2,m}\tau^2
\end{align*}
where $C_{2,m}:=\alpha_m\ee^{c(m+1)\epsilon/2}$.

Approximating the evolution family using the midpoint rule we have
\begin{align*}
\hyl{t6l}\leq&
\underbrace{\left\|\U((\ell+1/2)\tau-\delta,\ell\tau-\delta)-\ee^{\frac\tau2Q\left(
F\left(
\prescript{}{(\ell+1/4)\tau-\delta}u
\right)
\right) }\right\|}_{\hyt{\text{(T6a$\ell$)}}{t6al}}\\
{}&+
\underbrace{\left\|\ee^{\frac\tau2Q\left(
F\left(
\prescript{}{(\ell+1/4)\tau-\delta}u
\right)\right) }
-\ee^{\frac\tau2Q\left(
\sum_{k=0}^{\lfloor\frac{\delta-\epsilon}{\tau}\rfloor}
\kappa_{k,\tau} F_{k,\tau}\left(u_{\ell-2N+k}^{(\tau)}\right)
\right)} \right\|}_{\hyt{\text{(T6b$\ell$)}}{t6bl}}.
\end{align*}

Since we for all $t,s\in[0,m\epsilon]$ have
\[
\|Q(F(\prescript{}{t}u))-Q(F(\prescript{}{s}u))\|\leq L_QL_F\|\prescript{}{t}u-\prescript{}{s}u\|_\infty\leq L_QL_FL_{u,m\epsilon}|t-s|,
\]
where $L_F$ is the Lipschitz constant of the continuous function $F\in\C^2\left(\C([-\delta,0],X),X\right)$, Theorem \ref{thm:batkai} and \eqref{eq:magnuscons} imply
\begin{align*}
\hyl{t6al}\leq L_QL_FL_{u,m\epsilon}e^{c(m+1)\epsilon/2}\left(\frac\tau 2\right)^2.
\end{align*}
Furthermore the variation of constants formula and the uniform quasi-contractivity of the semigroups involved guaranteed by \eqref{W1} and Assumption \eqref{ass:3} yield
\begin{align*}
\hyl{t6bl}\leq\frac\tau2 \ee^{c(m+1)\epsilon}\underbrace{
\left\|
Q\left(
F\left(
\prescript{}{(\ell+1/4)\tau-\delta}u
\right)\right) -
Q\left(
\sum_{k=0}^{\lfloor\frac{\delta-\epsilon}{\tau}\rfloor}
\kappa_{k,\tau} F_{k,\tau}\left(u_{\ell-2N+k}^{(\tau)}\right)
\right)
\right\|
}_{\hyt{\text{(T7$\ell$)}}{t7l}}.
\end{align*}
Finally, we have using the bound \eqref{eq:kvadr_err} 
\begin{align*}
\hyl{t7l}\leq&L_Q\left\|
F\left(
\prescript{}{(\ell+1/4)\tau-\delta}u
\right)-
\sum_{k=0}^{\lfloor\frac{\delta-\epsilon}{\tau}\rfloor}
\kappa_{k,\tau} F_{k,\tau}\left(u_{\ell-2N+k}^{(\tau)}\right)
\right\|\\
\leq&L_Q\left(\left\|
F\left(
\prescript{}{(\ell+1/4)\tau-\delta}u
\right)-
F\left(
\prescript{}{\ell\tau-\delta}u
\right)
\right\|
+\left\|
F\left(
\prescript{}{\ell\tau-\delta}u
\right)-
\sum_{k=0}^{\lfloor\frac{\delta-\epsilon}{\tau}\rfloor}
\kappa_{k,\tau} F_{k,\tau}\left(u_{\ell-2N+k}^{(\tau)}\right)
\right\|\right)\\
\leq& L_QL_FL_{u,m\epsilon}\frac\tau4\\
{}&+L_Q
\left(
\left\|F\left(
\prescript{}{\ell\tau-\delta}u
\right)-F_\tau\left(
\prescript{}{\ell\tau-\delta}u
\right)\right\|+
\left\|
F_\tau\left(
\prescript{}{\ell\tau-\delta}u
\right)-
\sum_{k=0}^{\lfloor\frac{\delta-\epsilon}{\tau}\rfloor}
\kappa_{k,\tau} F_{k,\tau}\left(u_{\ell-2N+k}^{(\tau)}\right)\right\|
\right)\\
\leq&L_QL_FL_{u,m\epsilon}\frac\tau4+L_Q\ms{C}\|u\|_{\C^2([-\delta,m\epsilon])}\tau^2+L_Q
\sum_{k=0}^{\lfloor\frac{\delta-\epsilon}{\tau}\rfloor}
\left|\kappa_{k,\tau}\right|\underbrace{\|u(\ell\tau-\delta+k\tau)-u_{\ell-2N+k}^{(\tau)}\|}_{\veps_{\ell+k-2N}}.
\end{align*}

 Now note that $\lfloor\frac{\delta-\epsilon}{\tau}\rfloor-N\leq -\frac\epsilon\tau<0$, so all indices for the $\epsilon$'s appearing above are between $-2N$ and $n-\lfloor\epsilon/\tau \rfloor$. Also, $\tau\leq \delta$.
Thus, using \eqref{eq:weights}, we obtain 
\begin{align*}
\hyl{t7l}\leq L_Q\left(L_fL_{u,m\epsilon}/4+ \ms{C}\|u\|_{\C^2([-\delta,m\epsilon])}\delta+L\alpha_m\delta\right)\tau,
\end{align*}
and so
\begin{align*}
\hyl{t6l}\leq C_{3,m}\tau^2
\end{align*}
where $C_{3,m}:=\ee^{c(m+1)\epsilon/2}L_QL_FL_{u,m\epsilon}/4+\ee^{c(m+1)\epsilon}L_Q\left(L_fL_{u,m\epsilon}/4+ \ms{C}\|u\|_{\C^2([-\delta,m\epsilon])}\delta+L\alpha_m\delta\right)/2$.

Hence
\begin{align}\label{eq:t5l}
\hyl{t5l}\leq 
M_{u,m\epsilon}\hyl{t6l}+C_{2,m}\tau^2\leq C_{4,m}\tau^2
\end{align}
for all $\ell\geq N$ where $C_{4,m}:=M_{u,m\epsilon}C_{3,m}+C_{2,m}$, and so also for all $\ell\geq0$.
By assumption $(n+1)\tau\leq(m+1)\epsilon$, hence
\begin{equation*}
\hyl{t2}\leq \tau C_{1,m}\left(
\ms{C}\tau(m+1)\epsilon\|u\|_{\C^2([-\delta,m\epsilon],X)}+
\sum_{j=0}^n 
\sum_{\ell=j}^{ j+\lfloor\frac{\delta-\epsilon}{\tau}\rfloor}
\left|\kappa_{\ell-j,\tau}\right|L_F
\hyl{t5l}
\right)
\leq C_{5,m}\tau^2
\end{equation*}
with $C_{5,m}:=C_{1,m}(m+1)\epsilon\left(\ms{C}\|u\|_{\C^2([-\delta,m\epsilon],X)}+LL_FC_{4,m}\right)$.

Thus, combining this with inequality \eqref{eq:term1} and substituting into inequality \eqref{eq:tri1}, we obtain
\begin{align*}
\veps_{n+1}\leq (C_{0,m}+C_{5,m})\tau^2,
\end{align*}
and setting
\[
\alpha_{m+1}:=C_{0,m}+C_{5,m}
\]
concludes the inductive step for \eqref{W4}.

Finally, to see that claim \eqref{W2} is also true, we choose $N_\ell:=2^\ell$, hence, $\tau_\ell=\delta/2^\ell$. Then by \eqref{W4} for any $q/2^z\in[0,(m+1)\epsilon/\delta]$ ($q,z\in\mb{N}$) we have that
\begin{equation*}
\lim_{\ell\to\infty} u_{q2^{\ell-z}}^{(\tau_\ell)}=u(\tfrac{q}{2^z}\delta)
\end{equation*}
with the left-hand side consisting of points in $W$ only. Since $W$ is closed, $u(\frac{q}{2^z}\delta)\in W$ for any $\delta q/2^z\in[0,(m+1)\epsilon]$ ($q,z\in\mb{N}$), but by continuity of the solution this then implies $u(t)\in W$ for all $t\in[0,(m+1)\epsilon]$.
\end{proof}

\begin{remark} If $\varphi$ is only given at the grid points, and hence the values $\varphi((\ell+1/2)\tau-\delta)$ are not known (or for any reason the use of off-grid exact values in the method is undesirable),  we may amend the method and approximate these values by the average $(\varphi(\ell\tau-\delta)+\varphi((\ell+1)\tau-\delta))/2$ for indices $\ell=0,\dots,N-1$. The error of this approximation can be bounded above by $C_6\tau^2$ with $C_6:=\|\varphi''\|_\infty/4$. This would lead to an upper bound on \hyl{t5l} of unchanged order of magnitude $O(\tau^2)$, with the new constant $C_{4,m}:=\max\left\{M_{u,m\epsilon}C_{3,m}+C_{2,m},C_6\right\}$ in \eqref{eq:t5l}.
\end{remark}

At this point it is natural to ask whether higher order convergence may be achieved by applying a higher order truncation of the Magnus expansion. However, to our knowledge not even the building blocks Theorems \ref{thm:ostermann} and \ref{thm:batkai} have higher order variants available yet, so these are all open questions to be further investigated.

\section{Application to an epidemic model}
\label{sec:exm}

As an illustrative example we treat an epidemic model which is based on the classical SIR model introduced in \citet{Kermack-McKendrick-1927} but takes into account the effect of the vaccination, the space-dependency of infection \citep[cf.][]{Kendall}, and the random movement of individuals as well. And most importantly we include the latent period which leads to a delay equation (see, e.g., in \citet{He-Tsai}, \citet{Huang-Takeuchi}, \citet{Xu}). \medskip

Let $\Omega\subset\RR^2$ be the space domain, $\delta>0$ the latent period, and for all time values $t\geq -\delta$ let $S(t),I(t),R(t)\colon\Omega\to\RR$ denote the spatial distribution of susceptible, infected, and recovered individuals within the total population, respectively. We assume that each of these functions lies in $Y:=L^2(\Omega)$, and our state space will be the Hilbert space $X:=Y^3$ with the norm 
\begin{equation*}
\|(x_1,x_2,x_3)\|_X^2:=\|x_1\|_2^2+\|x_2\|_2^2+\|x_3\|_2^2.
\end{equation*}
We will also use the norm 
\begin{equation*}
\|(x_1,x_2,x_3)\|_1:=\|x_1\|_1+\|x_2\|_1+\|x_3\|_1.
\end{equation*}
Note that for any $h\in L^2(\Omega)$ we have $\|h\|_1\leq \|h\|_2\cdot\lambda(\Omega)^{1/2}$, hence for any $x\in X$, we have $\|x\|_1\leq \|x\|_X\sqrt{3\lambda(\Omega)}$, where $\lambda(\Omega)$ denotes the Lebesgue measure of $\Omega$. \medskip

The temporal change of $S$, $I$, $R$ depends on various phenomena, from which we first consider the infection-related ones. The number of susceptible individuals decreases because they are in contact with infected people and get infected. More precisely, the actual change in the number of susceptibles depends on itself and on the number of encounters between these susceptibles and those who were infected one latent period ago. The number of infected individuals naturally increases by the same amount, and decreases with the number of people who recover. To consider an even more realistic model, we take into account the effect of vaccination as well, when the vaccinated individuals become recovered (immune) ones. \medskip

Moreover, one can consider the nonhomogeneous spatial distribution of the various populations as well. To do so we suppose that the infected individuals have a space-dependent influence on the susceptible ones. For instance, the healthy individuals get infected more likely closer to the infected ones. \medskip

We further consider a certain dynamics of the population, namely, the random movement (diffusion) of the individuals which leads to the faster transfer of the infection (see, e.g., \citet{He-Tsai}, \citet{Xu}). This process will be described by the Laplacian operator $\Delta:=\partial_x^2+\partial_y^2$ on $\Omega$ with the homogeneous Neumann boundary condition. \medskip 

Based on the considerations above, a compartment-type model can be formulated. Let $\beta>0$ denote the infection rate, $\gamma>0$ the recovery rate, and $\nu>0$ the vaccination rate. We also introduce a term $\mc I\colon[0,\infty)\to Y$ that will incorporate both the space-dependence of the infection process, and the time-delay involved.
More specifically, for all $t\in[0,\infty)$, we here let
\begin{equation}\label{eq:calI}
\mc I(t)=G\big(S(t-\delta),I(t-\delta),R(t-\delta)\big)
\end{equation}
for some appropriate function $G\colon X\to Y$. Typically, $G$ will depend only on the second coordinate, and for instance take the form of a convolution.

Then we consider the following system of (delayed) integro-differential equations:
\begin{equation}\label{eq:dxsir}
\left\{
\begin{aligned}
\tfrac{\dd}{\dd t} S(t) &= \Delta S(t)-\beta S(t)\mc I(t)-\nu S(t),  \\
\tfrac{\dd}{\dd t} I(t) &= \Delta I(t)+\beta S(t)\mc I(t)-\gamma I(t), \\
\tfrac{\dd}{\dd t} R(t) &= \Delta R(t)+\nu S(t)+\gamma I(t)
\end{aligned}
\right.
\end{equation}
for all $t\ge 0$. Due to the delay term $\mc{I}(t)$ we also need history functions $\varphi_S,\varphi_I,\varphi_R\colon[-\delta,0]\to Y$ such that
\begin{equation*}
S(s) = \varphi_S(s),\quad I(s) = \varphi_I(s),\quad R(s) = \varphi_R(s)
\end{equation*}
for all $s\in[-\delta,0]$. We assume that $\varphi_S(s),\varphi_I(s),\varphi_R(s)\geq 0$ holds for all $s\in[-\delta,0]$, and $S(s)+I(s)+R(s)$ is constant on $[-\delta,0]$.\medskip

Since the analytic solution to problem \eqref{eq:dxsir} is unknown, our aim is to approximate it using the Magnus-type integrator \eqref{eq:magnus}. To do so we introduce the function $u\colon[-\delta,\infty)\to X$ as
\begin{equation*}
u(t)=\big(S(t),I(t),R(t)\big)
\end{equation*}
for all $t\in[-\delta,\infty)$, and the operator family
\begin{equation*}
Q(w) =
\left(\begin{array}{lcr}
\Delta+\mc{M}_{-\beta G(w) }-\nu & 0 & 0 \\
\mc{M}_{\beta G(w) } & \Delta-\gamma & 0 \\
\nu & \gamma & \Delta
\end{array}\right)
\end{equation*}
for $w\in X$, where $\mc{M}_g\in\ms{L}(Y)$ denotes the multiplication operator $h\mapsto h\cdot g$ (this is a bounded operator whenever $g\in L^\infty(\Omega)$) and $\Delta$ is the Laplacian operator with the homogeneous Neumann boundary condition. Then the epidemic model \eqref{eq:dxsir} can be written as a quasilinear delay equation \eqref{eq:delay} with $F$ being the evaluation at $-\delta$. The latter means that this example is actually a special case of the point-delay presented in Example \ref{exm:point-delay} (with the choices of $\kappa_{\ell,\tau}$ and $F_{\ell,\tau}$ detailed in Remark \ref{rem:point-delay}), the Assumptions \eqref{ass:1}-\eqref{ass:3} are automatically satisfied. We further assume that $\varphi:=(\varphi_S,\varphi_I,\varphi_R)$ satisfies Assumption \eqref{ass:4}. \medskip

Our aim is to show that the Magnus-type integrator \eqref{eq:magnus} applied to the epidemic model \eqref{eq:dxsir} is convergent of second order. So by Theorem \ref{thm:main} we would need to check that Assumptions \eqref{Q}--\eqref{analytic} also hold, with $Q$ defined above.The issue is that we do not have uniform quasi-contractivity for $Q(w)$ if $w$ is allowed to run through all of $X$. Fortunately, there actually exists a natural invariant set $W\subset X$ that allows us to apply Theorem \ref{thm:main} and that prevents the blow-up of the solution. This invariant set will in addition ensure that the Magnus-type integrator preserves both positivity and the total population (i.e., the value $S+I+R$) when applied to problem \eqref{eq:dxsir}.

The next proposition covers Assumption \eqref{Q} under very mild conditions on the map $G$ (we assume neither continuity, nor linearity here).
\begin{proposition}\label{prop:invariant_example}
Let $\Omega\subset \RR^2$ be a bounded open set with boundary $\partial \Omega$ being a smooth Jordan curve, and $X:=(L^2(\Omega))^3$ the Banach lattice with norm $\|(x_1,x_2,x_3)\|_X^2:=\|x_1\|_2^2+\|x_2\|_2^2+\|x_3\|_2^2$. Further, let $\ms{I}>0$ be a constant, and
\begin{equation*}
W:=\Big\{w=(w_1,w_2,w_3)\in X_+\Big|
\int_\Omega w_1+w_2+w_3=\ms{I}
\Big\}.
\end{equation*}
Let further $G\colon X\to \C(\overline\Omega)$ be a positive
map such that there exists a constant $\mk{C}\geq0$ with
$\|G(w)\|_\infty\leq \mk{C} \|w\|_1$ for all $w\in W$.
Define the operators $\widetilde{P}(w)\in\ms{L}(X)$ for $w\in W$ as
\begin{equation}\label{eq:Qtilde}
\widetilde P(w) =
\left(\begin{array}{lcr}
\mc{M}_{-\beta G(w) }-\nu & 0 & 0 \\
\mc{M}_{\beta G(w) } & -\gamma & 0 \\
\nu & \gamma & 0
\end{array}\right),
\end{equation}
where $\beta,\gamma,\nu>0$ and for any $g\in \C(\overline\Omega)$, $\mc{M}_g\in\ms{L}(Y)$ denotes the multiplication operator $h\mapsto h\cdot g$. Let 
\begin{equation*}
H:=\left\{h\in Y\Big| h\in H^2(\Omega)\; \mbox{ and }\; \frac{\partial h}{\partial\mf{n}}=0 \mbox{ on }\Omega\right\},
\end{equation*}
and finally let $(P_0,D)$ be the diagonal Laplacian operator with the homogeneous Neumann boundary condition, i.e.,
\begin{equation*}
P_0 =
\left(\begin{array}{ccc}
\Delta & 0 & 0 \\
0 & \Delta & 0 \\
0 & 0 & \Delta
\end{array}\right)
\end{equation*}
with domain
\begin{equation*}
D =
\left\{
x\in X\Big|
x_j\in H \mbox{ for all } j=1,2,3
\right\}.
\end{equation*}
Then there exists $\alpha>0$ such that $\widetilde{P}(w)-\alpha\Id$ is dispersive (and in particular dissipative) for all $w\in W$, and the operators $(Q(w))_{w\in W}$ given by $Q(w):=P_0+\widetilde{P}(w)$ are generators of positive, uniformly quasi-contractive semigroups $(S_w(t))_{t\geq0}$ that leave $W$ invariant.
\end{proposition}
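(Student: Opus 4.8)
The plan is to route everything through the notion of dispersivity in the Hilbert lattice $X=L^2(\Omega)^3\cong L^2(\Omega;\RR^3)$, equipped with the componentwise order. Here the norm is smooth and the positive tangent functional to $u^+$ is simply $u^+/\|u^+\|_X$, so a densely defined operator $A$ is dispersive exactly when $\langle Au,u^+\rangle_X=\sum_{j=1}^3\int_\Omega (Au)_j\,u_j^+\le 0$ for all $u\in D(A)$. The key preliminary observation is that on $W$ every $w$ satisfies $\|w\|_1=\int_\Omega(w_1+w_2+w_3)=\ms{I}$ (all $w_j\ge0$), so the hypothesis on $G$ yields the \emph{uniform} bound $\|G(w)\|_\infty\le\mk{C}\ms{I}=:K$. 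Consequently $\mc M_{\beta G(w)}$ and $\widetilde P(w)$ are bounded uniformly in $w\in W$, and writing $g:=G(w)$ we have $0\le\beta g\le\beta K$ pointwise.

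First I would establish dispersivity of $\widetilde P(w)-\alpha\Id$. For $u=(u_1,u_2,u_3)\in X$, using $u_iu_j^+\le u_i^+u_j^+$ (valid since $u_i\le u_i^+$ and $u_j^+\ge0$) together with $0\le\beta g\le\beta K$, a direct computation gives
\begin{align*}
\big\langle(\widetilde P(w)-\alpha\Id)u,\,u^+\big\rangle_X
&\le\int_\Omega\Big[-(\nu+\alpha)(u_1^+)^2-(\gamma+\alpha)(u_2^+)^2-\alpha(u_3^+)^2\\
&\qquad\quad+\beta K\,u_1^+u_2^++\nu\,u_1^+u_3^++\gamma\,u_2^+u_3^+\Big].
\end{align*}
Bounding the three mixed products by Young's inequality, the integrand becomes a quadratic form in $(u_1^+,u_2^+,u_3^+)$ whose diagonal coefficients are all nonpositive as soon as $\alpha\ge\tfrac12(\beta K+\nu+\gamma)$. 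Choosing this $\alpha$ (depending only on $\beta,\gamma,\nu,\mk{C},\ms{I}$, hence uniform in $w\in W$) renders the integrand pointwise nonpositive, so $\widetilde P(w)-\alpha\Id$ is dispersive, and in particular dissipative. Securing this estimate \emph{uniformly} in $w$ is the one genuinely technical point of the proof; everything else is standard lattice-semigroup bookkeeping.

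Next I would treat $Q(w)=P_0+\widetilde P(w)$. The diagonal Neumann Laplacian $P_0$ is self-adjoint and generates a positive analytic contraction semigroup; it is moreover dispersive, since for $u\in D$ integration by parts with the homogeneous Neumann condition gives $\langle P_0u,u^+\rangle_X=-\sum_j\int_\Omega|\nabla u_j^+|^2\le0$. Because in the Hilbert lattice the criterion $\langle\,\cdot\,,u^+\rangle_X\le0$ is additive over summands, $Q(w)-\alpha\Id=P_0+(\widetilde P(w)-\alpha\Id)$ is dispersive on $D$. Since $\widetilde P(w)-\alpha\Id\in\ms{L}(X)$ is a bounded perturbation of the generator $P_0$, the operator $Q(w)-\alpha\Id$ generates a $C_0$-semigroup, so $\ran(\lambda-(Q(w)-\alpha\Id))=X$ for large $\lambda>0$. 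By the Phillips characterisation of generators of positive contraction semigroups (cf.~\cite{Engel-Nagel}), $Q(w)-\alpha\Id$ generates a positive contraction semigroup; equivalently $Q(w)$ generates a positive semigroup $(S_w(t))_{t\ge0}$ with $\|S_w(t)\|\le\ee^{\alpha t}$, and since $\alpha$ is uniform in $w\in W$ these are uniformly quasi-contractive.

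Finally, for the invariance of $W$ I would combine positivity with conservation of total mass. Positivity already gives $S_w(t)X_+\subset X_+$. For the mass, note that for $y\in D$ the reaction terms cancel row-wise, leaving $\sum_{j}(Q(w)y)_j=\Delta(y_1+y_2+y_3)$; equivalently $\mf 1:=(1,1,1)\in D$ satisfies $Q(w)^*\mf 1=0$. Hence for $v\in D$,
\begin{equation*}
\tfrac{\dd}{\dd t}\int_\Omega\sum_{j}(S_w(t)v)_j=\int_\Omega\Delta\Big(\sum_{j}(S_w(t)v)_j\Big)=0
\end{equation*}
by the Neumann boundary condition, so $\int_\Omega\sum_j(S_w(t)v)_j=\int_\Omega\sum_jv_j$; by density of $D$ in $X$ and boundedness of $S_w(t)$ this identity extends to all $v\in X$. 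Together with positivity this shows $S_w(t)v\in W$ whenever $v\in W$, i.e.\ the semigroups $(S_w(t))_{t\ge0}$ leave $W$ invariant, completing the argument.
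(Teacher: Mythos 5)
Your proposal is correct, and its overall architecture coincides with the paper's: establish uniform dispersivity of $\widetilde P(w)-\alpha\Id$ on $W$ (using $\|w\|_1=\ms I$ there), deduce that $Q(w)-\alpha\Id$ generates a positive contraction semigroup, and obtain invariance of $W$ by combining positivity with conservation of $\int_\Omega(w_1+w_2+w_3)$ via the divergence theorem and the Neumann condition. The differences are in the execution of the two analytic steps. For dispersivity you work with the unnormalised tangent functional $u^+$ in the Hilbert lattice and absorb the off-diagonal terms by Young's inequality, arriving at $\alpha\ge\tfrac12(\beta\mk C\ms I+\nu+\gamma)$; the paper instead uses the componentwise-normalised duality element $(f_1^+/\|f_1^+\|_Y,f_2^+/\|f_2^+\|_Y,f_3^+/\|f_3^+\|_Y)$, under which the $\nu$- and $\gamma$-cross terms cancel against the corresponding diagonal terms and only the $\beta$-term survives, giving the slightly sharper constant $\alpha=\beta\mk C\ms I$. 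For generation and positivity you prove dispersivity of the full operator $Q(w)-\alpha\Id$ (adding the dispersive Neumann Laplacian) and invoke the Phillips-type characterisation together with the range condition supplied by bounded perturbation; the paper instead cites a perturbation theorem for positive semigroups (a positive contraction semigroup generator plus a dispersive relatively bounded perturbation again generates one). Both routes are standard and valid; yours is marginally more self-contained at the cost of a slightly larger quasi-contractivity constant, which is immaterial for the application. One cosmetic point: the Phillips characterisation you appeal to is found in the positivity literature (e.g.\ the cited book of B\'atkai, Kramar Fijav\v{z} and Rhandi) rather than in Engel--Nagel, so the reference should be adjusted.
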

\begin{proof}
First let us show that for each $w\in W$ and $x\in X$ the (total population) function 
\begin{equation*}
t\mapsto\int_\Omega (S_w(t)x)_1+(S_w(t)x)_2+(S_w(t)x)_3
\end{equation*}
is constant. This, with positivity of the semigroups (to be shown after), would imply the invariance of $W$. Let us therefore consider some $x=(x_1,x_2,x_3)\in D$, and note that by differentiability of the orbit $t\mapsto S_w(t)x$ we have
\begin{align*}
&\ddt\int_\Omega (S_w(t)x)_1+(S_w(t)x)_2+(S_w(t)x)_3=\int_\Omega (Q(w)x)_1+(Q(w)x)_2+(Q(w)x)_3\\
=& \int_\Omega \left(\Delta x_1-\beta G(w)x_1-\nu x_1\right)+ \left(\Delta x_2+\beta G(w)x_1-\gamma x_2\right)+ \left(\Delta x_3+\nu x_1+\gamma x_2\right)\\
=& \int_\Omega \Delta x_1+\Delta x_2+\Delta x_3=\int_{\partial\Omega} \nabla\,x_1+\nabla\,x_2+\nabla\,x_3=0,
\end{align*}
where we used the divergence theorem and the boundary condition, so the integral remains constant whenever $x\in D$. Now $D$ is dense in $X$, so by standard arguments this holds for all $x\in X$. \medskip

Next, let us look at positivity. By classical PDE theory it is known that the Laplacian with Neumann boundary condition generates a strongly continuous contraction semigroup on $L^2(\Omega)$, and so $(P_0,D)$ also generates a strongly continuous contraction semigroup on $X$. We shall view each operator $P_0+\widetilde{P}(w)$ as a bounded perturbation of $P_0$.\\
By \citet[Thm. 13.3]{Batkai-etal} we have that if $A$ is the generator of a positive strongly continuous contraction semigroup, and $B$ is a dispersive and $A$-bounded operator with $A$-bound $a_0<1$, then $A+B$ is also the generator of a positive strongly continuous contraction semigroup. In this case we only aim for uniform quasi-contractivity, so it is enough to set $A=P_0$ and show that there exists some $\alpha>0$ such that each $\widetilde{P}(w)-\alpha\Id$ is dispersive (any bounded operator is $P_0$-bounded with bound 0). To show dispersivity, we resort to \citet[Prop. 11.12]{Batkai-etal}. Let us fix $w\in W$, $f\in X$, and a corresponding $f^*\in\ms{J}^+(f)$, i.e. an $f^*\in (X^*)_+=X_+$ such that $\langle f,f^*\rangle_X=\|f^+\|_X$ and $\|f^*\|_X\in\{0,1\}$. In this case $f^*$ is of the form $(f_1^+/\|f_1^+\|_Y,f_2^+/\|f_2^+\|_Y,f_3^+/\|f_3^+\|_Y)$ with the convention $0/0=0$.

We then have
\begin{align*}
&\langle (\widetilde{P}(w)-\alpha)f,f^*\rangle_X\\
=&\langle (-\beta G(w)f_1-\nu f_1,\beta G(w)f_1-\gamma f_2,\nu f_1+\gamma f_2), (f_1^+/\|f_1^+\|_Y,f_2^+/\|f_2^+\|_Y,f_3^+/\|f_3^+\|_Y)\rangle_X \\
&-\alpha\langle f,f^*\rangle \\
=& \nu\big(\langle f_1,f_3^+/\|f_3^+\|_Y\rangle-\langle f_1,f_1^+/\|f_1^+\|_Y\rangle\big)+\gamma\big(\langle f_2,f_3^+/\|f_3^+\|_Y\rangle-\langle f_2,f_2^+/\|f_2^+\|_Y\rangle\big) \\
&-\beta \langle G(w)f_1,f_1^+/\|f_1^+\|_Y\rangle+\beta \langle G(w)f_1,f_2^+/\|f_2^+\|_Y\rangle \\
&-\alpha \langle f_1,f_1^+/\|f_1^+\|_Y\rangle-\alpha \langle f_2,f_2^+/\|f_2^+\|_Y\rangle-\alpha \langle f_3,f_3^+/\|f_3^+\|_Y\rangle.
\end{align*}
Since $\langle f_j,f_j^+/\|f_j^+\|_Y\rangle=\|f_j^+\|_Y$ and $\langle f_j,f_k^+/\|f_k^+\|_Y\rangle\le\langle f_j^+,f_k^+/\|f_k^+\|_Y\rangle\le\|f_j^+\|_Y$ for all $j,k=1,2,3$, and due to the non-negativity of $G(w)$, we have
\begin{align*}
&\langle (\widetilde{P}(w)-\alpha)f,f^*\rangle_X \\
&\le \beta\langle G(w)f_1^+,f_2^+/\|f_2^+\|_Y\rangle-\beta\langle G(w)f_1^+,f_1^+/\|f_1^+\|_Y\rangle-\alpha\|f_1^+\|_Y-\alpha\|f_2^+\|_Y-\alpha\|f_3^+\|_Y \\
&\le(\beta\|G(w)\|_\infty-\alpha)\|f_1^+\|_Y\le (\beta C\|w\|_1-\alpha)\|f_1^+\|_Y=(\beta \mk{C}\ms I-\alpha)\|f_1^+\|_Y.
\end{align*}
Thus we may choose any $\alpha\ge\beta \mk{C}\ms{I}$ to obtain dispersive perturbations as required. Finally, note that a bounded dispersive operator is always dissipative.
\end{proof}

To be able to guarantee that Assumptions \eqref{smooth2}--\eqref{analytic} are also satisfied, we need to impose stronger conditions on $G$.

\begin{definition}
The linear function $G\colon X\to \C(\overline\Omega)$ is called \emph{well-adapted} if there exists a constant $\mk{C}>0$ such that the following estimates hold for all $w\in D$:
\begin{align}
\label{eq:Gw0} \|Gw\|_\infty &\le \mk{C}\|w\|_1, \\
\label{eq:Gw1} \|\partial_j(Gw)\|_\infty &\le \mk{C}\|w\|_1, \text{ for } j=1,2,3, \\
\label{eq:Gw2} \|\Delta(Gw)\|_\infty &\le \mk{C}\|w\|_1.
\end{align}
\end{definition}

Note that the Laplacian $P_0$ has 0 as an eigenvalue (with the constant functions in each coordinate as eigenvectors), so in order to define a norm on $D$, we have to shift the operator $P_0$.

\begin{definition}
Fix an $\veps>0$, set $Q_0:=P_0-\veps \mr{Id}$, and let $D$ be equipped with the norm
\[
\|x\|_D:=\|x\|_X+\|x\|_{Q_0}=\|x\|_X+\|Q_0x\|_X.
\]
It is well known that changing the value of $\veps>0$ leads to an equivalent norm.
Similarly, let $H$ be equipped with the norm
\[
\|f\|_H:=\|f\|_Y+\|f\|_{\Delta-\veps}=\|f\|_Y+\|(\Delta-\veps \mr{Id})f\|_Y.
\]
\end{definition}

\begin{lemma}\label{lem:D_C2}
Let $G\colon X\to\C(\overline\Omega)$ be well-adapted. Then the function $\widetilde P\colon D\to\Ell(D)$, defined in \eqref{eq:Qtilde}, is continuously differentiable.
\end{lemma}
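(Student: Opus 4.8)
The plan is to exploit that $\widetilde P$ is \emph{affine} in $w$ and thereby reduce the whole statement to a single multiplier bound. Since $G$ is linear and the assignment $g\mapsto\mc M_g$ is linear in $g$, the map $w\mapsto\widetilde P(w)$ splits as $\widetilde P(w)=\widetilde P_0+\widetilde P_1(w)$, where $\widetilde P_0$ is the constant matrix of bounded operators obtained by setting $G\equiv0$ (the entries $-\nu,-\gamma,\nu,\gamma$), and $\widetilde P_1(w)x=(-\beta\,G(w)x_1,\ \beta\,G(w)x_1,\ 0)$ depends linearly on $w$. An affine map between Banach spaces whose linear part is bounded is automatically $\C^\infty$, with constant derivative equal to that linear part. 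Hence it suffices to show that $\widetilde P_1\colon D\to\Ell(D)$ is a bounded linear operator; then $\widetilde P'(w)=\widetilde P_1$ for every $w$, a constant and therefore continuous map, which gives continuous differentiability (indeed smoothness).

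Fixing $w,x\in D$, the first task is to verify that $\widetilde P_1(w)x\in D$, i.e.\ that $G(w)x_1\in H$. That $G(w)x_1\in H^2(\Omega)$ is routine, since $G(w)\in W^{2,\infty}(\Omega)$ by \eqref{eq:Gw0}--\eqref{eq:Gw2} and $x_1\in H^2(\Omega)$, so the product lies in $H^2$ and its Laplacian is controlled by the Leibniz expansion below. I expect the genuine difficulty to lie precisely here: membership in $H$ also requires the homogeneous Neumann condition $\partial(G(w)x_1)/\partial\mf n=0$ on $\partial\Omega$. Writing $\partial(G(w)x_1)/\partial\mf n=(\partial G(w)/\partial\mf n)\,x_1+G(w)\,(\partial x_1/\partial\mf n)$, the second summand vanishes because $x_1\in H$, but the first summand has to be dealt with using the specific boundary behaviour encoded in a well-adapted $G$ (rather than the mere $H^2$-regularity of the product, which is automatic). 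This boundary-compatibility step is the main obstacle and must be settled before any norm estimate is meaningful.

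Granting the membership, the quantitative bound is a direct multiplier estimate. By the definition of $\|\cdot\|_D$ and the block structure, $\|\widetilde P_1(w)x\|_D\le\sqrt2\,\beta\big(\|G(w)x_1\|_Y+\|(\Delta-\veps)(G(w)x_1)\|_Y\big)$, so it is enough to bound the two $Y$-norms by $\|w\|_D\|x\|_D$ up to a constant. The first is immediate from \eqref{eq:Gw0}: $\|G(w)x_1\|_Y\le\mk C\|w\|_1\|x_1\|_Y$. For the second, Leibniz gives
\[
\Delta(G(w)x_1)=(\Delta G(w))\,x_1+2\,\nabla G(w)\cdot\nabla x_1+G(w)\,\Delta x_1,
\]
and I would bound the three terms in $\|\cdot\|_Y$ using \eqref{eq:Gw2}, \eqref{eq:Gw1} and \eqref{eq:Gw0} respectively, each contributing a factor $\mk C\|w\|_1$. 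The factors $\|x_1\|_Y$ and $\|\Delta x_1\|_Y$ are dominated by $\|x\|_D$ directly; the cross term needs $\|\nabla x_1\|_Y$, which I control by the Neumann integration-by-parts identity $\|\nabla x_1\|_Y^2=-\langle x_1,\Delta x_1\rangle_Y\le\|x_1\|_Y\|\Delta x_1\|_Y\lesssim\|x\|_D^2$. The leftover $-\veps\,G(w)x_1$ coming from $(\Delta-\veps)(\cdot)$ is absorbed into the first estimate.

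Finally I would pass from $\|w\|_1$ to $\|w\|_D$ via the embedding $\|w\|_1\le\sqrt{3\lambda(\Omega)}\,\|w\|_X\le\sqrt{3\lambda(\Omega)}\,\|w\|_D$ recorded earlier. Collecting the contributions yields $\|\widetilde P_1(w)\|_{\Ell(D)}\le K\|w\|_D$ with $K$ depending only on $\beta,\veps,\mk C$ and $\lambda(\Omega)$. This is exactly the boundedness of the linear map $\widetilde P_1$, and by the affine reduction of the first paragraph it completes the proof that $\widetilde P$ is continuously differentiable. As stressed, the only step that is not a routine computation is the verification that multiplication by $G(w)$ preserves the Neumann--Laplacian domain $H$; the $\C^1$-regularity is essentially free once that and the multiplier bound are in hand.
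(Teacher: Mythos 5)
Your argument follows the paper's proof essentially step for step: the same affine decomposition of $\widetilde P$ into a constant matrix plus a linear part, the same reduction to boundedness of $w\mapsto\mc M_{\beta(Gw)}$ from $D$ into $\Ell(H)$, the same Leibniz expansion of $\Delta\big((Gw)x_1\big)$ estimated term by term via \eqref{eq:Gw0}--\eqref{eq:Gw2}, the same Green's-identity bound $\|\nabla x_1\|_Y^2=-\langle x_1,\Delta x_1\rangle_Y\le\|x_1\|_Y\|\Delta x_1\|_Y$ (up to the harmless $-\veps$ shift), and the same final passage $\|w\|_1\le\sqrt{3\lambda(\Omega)}\,\|w\|_X\le\sqrt{3\lambda(\Omega)}\,\|w\|_D$. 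The quantitative multiplier estimate in your proposal is therefore correct and matches the paper's.

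The one step you explicitly do not settle --- that $(Gw)x_1$ actually belongs to $H$, i.e.\ satisfies the homogeneous Neumann condition --- is a genuine gap in your write-up: ``granting the membership'' is not a proof, and, as you yourself observe, the well-adapted bounds \eqref{eq:Gw0}--\eqref{eq:Gw2} control only sup-norms of $Gw$ and its derivatives and say nothing about $\partial(Gw)/\partial\mf{n}$ on $\partial\Omega$. Since $\partial\big((Gw)x_1\big)/\partial\mf{n}=\big(\partial(Gw)/\partial\mf{n}\big)x_1$ for $x_1\in H$, and elements of $H$ need not vanish on $\partial\Omega$, domain preservation requires $\partial(Gw)/\partial\mf{n}=0$ on $\partial\Omega$; for the convolution operators \eqref{eq:Gconvolution} this is an additional hypothesis on the kernel $h$, not a consequence of well-adaptedness. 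Be aware, however, that the paper's own proof skips this point entirely --- it only estimates $\|gf\|_Y+\|(\Delta-\veps)(gf)\|_Y$ and never verifies the boundary condition --- so you have correctly located a step at which the argument, in both your version and the paper's, is incomplete as written; resolving it would require either strengthening the definition of well-adaptedness or enlarging the domain $D$ on which $\widetilde P(\cdot)$ is required to act.
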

\begin{proof}
It suffices to show that $\widetilde P$ is an affine map, that is, it is the sum of a bounded linear map from $D$ to $\Ell(D)$ and a constant. For $w\in D$ we have
\begin{equation*}
\widetilde P(w)=\left(\begin{array}{ccc}
\mc M_{-\beta(Gw)} -\nu &0 & 0 \\
\mc M_{\beta(Gw)} & -\gamma & 0 \\
\nu & \gamma & 0
\end{array}\right)
=\left(\begin{array}{ccc}
\mc M_{-\beta(Gw)} & 0 & 0 \\
\mc M_{\beta(Gw)} & 0 & 0 \\
0 & 0 & 0
\end{array}\right)
+\left(\begin{array}{ccc}
 -\nu & 0 & 0 \\
0 & -\gamma & 0 \\
\nu & \gamma & 0
\end{array}\right),
\end{equation*}
where $\mc M$ stands for the corresponding multiplication operator. Hence, we should show that the map $D\ni w\mapsto \mc M_{\beta(Gw)}\in\Ell(H)$ is bounded,
i.e., that $\|(Gw)f\|_H\le c_0\|w\|_D\cdot \|f\|_H$ for some appropriate constant $c_0>0$.  \medskip

To this end, for any $f\in Y$, we first rewrite the left-hand side by using the notation $g:=Gw$ as
\begin{equation}\label{eq:Gwfminden}
\begin{aligned}
\|gf\|_H&=\|gf\|_Y+\|\Delta(gf)-\veps gf)\|_Y \\
&=\|gf\|_Y+\|(\Delta g)f+\langle \nabla g,\nabla f\rangle+g(\Delta f)-\veps gf\|_Y \\
&\le \|gf\|_Y + \|(\Delta f-\veps f)g\|_Y+\|(\Delta g)f\|_Y+2\|\langle \nabla g,\nabla f\rangle\|_Y \\
&\le \|g\|_\infty\cdot\|f\|_Y+\|g\|_\infty\cdot\|f\|_{\Delta-\veps}+\|(\Delta g)f\|_Y+2\|\langle \nabla g,\nabla f\rangle\|_Y \\
&\le \|g\|_\infty\cdot\|f\|_H+\|\Delta g\|_\infty\cdot\|f\|_Y+2\|\langle \nabla g,\nabla f\rangle\|_Y.
\end{aligned}
\end{equation}
The norm of the scalar product can be estimated as follows
\begin{equation*}
\|\langle\nabla g,\nabla f\rangle\|_Y^2=\int_\Omega\big|\langle\nabla g,\nabla f\rangle\big|^2 \le \int_\Omega|\nabla g|^2\cdot|\nabla f|^2 \le 3\mk{C}^2\|w\|_1^2\cdot\int_\Omega|\nabla f|^2,
\end{equation*}
where we used inequality \eqref{eq:Gw1} from Lemma \ref{lem:Gw}. By using Green's identity and the homogeneous Neumann boundary condition in $H$, we rewrite the integral term as
\begin{align*}
\int_\Omega|\nabla f|^2&=\int_\Omega(\nabla f)(\nabla f) = \int_{\partial\Omega}f(\nabla f)\mf n-\int_\Omega f(\Delta f) = -\int_\Omega f(\Delta f) \\
&=-\int_\Omega f(\Delta f-\veps f)-\int_\Omega \veps f^2\le -\int_\Omega f(\Delta f-\veps f)\le \|f\|_Y\cdot\|(\Delta-\veps)f\|_Y \\
&\le \tfrac 14\big(\|f\|_Y+\|(\Delta-\veps)f\|_Y\big)^2,
\end{align*}
where we used the inequality of arithmetic and geometric means in the last step.
Altogether we have the inequality
\begin{equation*}\label{eq:nabla}
\begin{aligned}
\|\langle \nabla g,\nabla f\rangle\|_Y &\le \sqrt{3\mk{C}^2\|w\|_1^2\cdot\tfrac 14\big(\|f\|_Y+\|(\Delta-\veps)f\|_Y\big)^2} = \tfrac{{\sqrt3\mk{C}}}2\|w\|_1\cdot\big|\|f\|_Y+\|(\Delta-\veps)f\|_Y\big| \\
&=\tfrac{{\sqrt3\mk{C}}}2\|w\|_1\cdot\|f\|_H.
\end{aligned}
\end{equation*}
Since also $\|g\|_\infty\le \mk{C}\|w\|_1$ by inequality \eqref{eq:Gw0} in Lemma \ref{lem:Gw}, and $\|\Delta g\|_\infty\le \mk{C}\|w\|_1$ by \eqref{eq:Gw2}, the inequality \eqref{eq:Gwfminden} leads to
\begin{align*}
&\|(Gw)f\|_H\le\|g\|_\infty\cdot\|f\|_H+\|\Delta g\|_\infty\cdot\|f\|_Y+2\|\langle \nabla g,\nabla f\rangle\|_Y\le (2+\sqrt3)\mk{C}\|w\|_1\cdot\|f\|_H,
\end{align*}
hence
\begin{align*}
\big\|\mc M_{\beta(Gw)}\big\|_{\Ell(H)}=&\sup_{\|f\|_H\le 1}\|\beta(Gw)f\|_H\le \sup_{\|f\|_H\le 1} (2+\sqrt3)\mk{C}\beta\|w\|_1\cdot\|f\|_H \\
=& (2+\sqrt3)\mk{C}\beta\|w\|_1\le c_0\beta\|w\|_X \le c_0\beta\|w\|_D
\end{align*}
for some appropriate constant $c_0>0$.
Thus, the linear map $D\ni w\mapsto\mc M_{\beta(Gw)}\in\Ell(H)$ is bounded, and its bound depends on the infection rate $\beta$ and the constant $\mk{C}$ from Lemma \ref{lem:Gw}.
\end{proof}

\begin{remark}\label{rem:Gw}
In the above proof we actually showed $\big\|\mc M_{Gw}\big\|_{\Ell(H)}\leq (2+\sqrt3)\mk{C}\|w\|_1$, which for $w\in W\cap D$ means $\big\|\mc M_{Gw}\big\|_{\Ell(H)}\leq (2+\sqrt3)\mk{C}\ms{I}$.
\end{remark}

\begin{lemma}\label{lem:dissip}
Let $G\colon X\to \C(\overline\Omega)$ be a well-adapted function. Then there exists an $\eta>0$ such that the operator $\P(w)-\eta\Id$ is dissipative on $D$ for every $w\in W\cap D$.
\end{lemma}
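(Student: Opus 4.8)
The plan is to verify the Lumer--Phillips dissipativity condition on the graph-norm space $(D,\|\cdot\|_D)$ directly: for each $x\in D\setminus\{0\}$ I would exhibit a normalized tangent functional $\Phi\in D^*$ and show $\langle(\P(w)-\eta)x,\Phi\rangle\le 0$. The key structural fact I intend to exploit is that the underlying space $X=(L^2(\Omega))^3$ is a Hilbert space, whereas only $D$ carries the non-Hilbertian sum norm $\|x\|_D=\|x\|_X+\|Q_0x\|_X$; this lets me build $\Phi$ out of the (unique) Hilbert tangent functionals of the two pieces.

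First I would record two facts at the level of $X$. Since $G(w)\ge 0$ and $\|G(w)\|_\infty\le\mk{C}\ms{I}$ for $w\in W\cap D$ (as in Proposition \ref{prop:invariant_example}), a direct computation of $\langle\P(w)y,y\rangle_X$ --- discarding the nonpositive term $-\beta\langle G(w)y_1,y_1\rangle_X$ and absorbing the cross terms $\beta\langle G(w)y_1,y_2\rangle_X$ and $\nu\langle y_1,y_3\rangle_X+\gamma\langle y_2,y_3\rangle_X$ by Young's inequality --- yields a constant $\alpha>0$, depending only on $\beta,\gamma,\nu,\mk{C},\ms{I}$, with $\langle\P(w)y,y\rangle_X\le\alpha\|y\|_X^2$ for \emph{every} $y\in X$ (not merely $y\in W$, which is what I shall need). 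Moreover, by Lemma \ref{lem:D_C2} the operator $\P(w)$ is bounded on $(D,\|\cdot\|_D)$, so $\P(w)-\eta\Id$ has domain all of $D$.

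Next, for $x\in D\setminus\{0\}$ write $a=\|x\|_X$ and $b=\|Q_0x\|_X$; since $Q_0=P_0-\veps\Id$ is injective we have $b>0$. Using $\phi_1=x/a$ and $\phi_2=Q_0x/b$, I define $\Phi(y):=\langle y,\phi_1\rangle_X+\langle Q_0y,\phi_2\rangle_X$ for $y\in D$. Then $\Phi(x)=a+b=\|x\|_D$ while $|\Phi(y)|\le\|y\|_X+\|Q_0y\|_X=\|y\|_D$, so $\Phi$ is a normalized tangent functional at $x$. Writing $Q_0\P(w)x=\P(w)Q_0x+[Q_0,\P(w)]x$ and applying the bound from the previous paragraph to both $x$ and $Q_0x$ gives
$$\langle(\P(w)-\eta)x,\Phi\rangle=\Big(\tfrac1a\langle\P(w)x,x\rangle_X-\eta a\Big)+\Big(\tfrac1b\langle Q_0\P(w)x,Q_0x\rangle_X-\eta b\Big)\le(\alpha-\eta)(a+b)+\tfrac1b\langle[Q_0,\P(w)]x,Q_0x\rangle_X.$$

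The main work, and the step I expect to be the real obstacle, is to bound the commutator $[Q_0,\P(w)]x$ in $X$ by a constant multiple of $\|x\|_D$, uniformly in $w\in W\cap D$. Since $Q_0$ is the diagonal shifted Laplacian and the constant entries $-\nu,-\gamma,\nu,\gamma$ of $\P(w)$ commute with it, the commutator reduces to the classical identity $[\Delta,\mc M_g]f=(\Delta g)f+2\langle\nabla g,\nabla f\rangle$ with $g=\pm\beta(Gw)$ acting on $x_1$ only. The well-adaptedness estimates \eqref{eq:Gw1} and \eqref{eq:Gw2} give $\|\Delta(Gw)\|_\infty,\|\nabla(Gw)\|_\infty\le\mk{C}\ms{I}$, and the first-order term is controlled via $\|\nabla x_1\|_2^2=-\langle x_1,\Delta x_1\rangle_X\le\|x_1\|_2\|\Delta x_1\|_2$ together with $\|\Delta x_1\|_2\le b+\veps a$ (from $(\Delta x)_1=(Q_0x)_1+\veps x_1$) and the arithmetic--geometric mean inequality, giving $\|\nabla x_1\|_2\le\tfrac12((1+\veps)a+b)$. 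Combining these yields $\|[Q_0,\P(w)]x\|_X\le K\|x\|_D$ with $K$ depending only on $\beta,\mk{C},\ms{I},\veps$, hence $\tfrac1b\langle[Q_0,\P(w)]x,Q_0x\rangle_X\le K\|x\|_D$. Therefore
$$\langle(\P(w)-\eta)x,\Phi\rangle\le(\alpha-\eta+K)\|x\|_D\le0\qquad\text{whenever }\eta\ge\alpha+K,$$
and since $\alpha$ and $K$ are uniform over $w\in W\cap D$, this choice of $\eta$ shows that $\P(w)-\eta\Id$ is dissipative on $D$ for all $w\in W\cap D$, as claimed.
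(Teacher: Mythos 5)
Your proof is correct, and it reaches the same conclusion by a noticeably different organization of the estimates. The paper's proof stays coordinatewise: it views each component space $H$ with the norm $\|g\|_Y+\|(\Delta-\veps)g\|_Y$ as a diagonal in an $\ell^1$-sum of two Hilbert spaces, picks the tangent functional $(g/\|g\|_Y,\,g/\|g\|_{\Delta-\veps})$, and then bounds the troublesome pairings $\langle\beta(Gw)f_1,f_k/\|f_k\|_{\Delta-\veps}\rangle_{\Delta-\veps}$ in one stroke by Cauchy--Schwarz using the operator bound $\|\mc M_{Gw}\|_{\Ell(H)}\le(2+\sqrt3)\mk{C}\ms{I}$ already established in Lemma \ref{lem:D_C2}. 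Your tangent functional is the same basic device (pair $y$ with $x$ and $Q_0y$ with $Q_0x$ in the underlying Hilbert norm, suitably normalized), except that you combine the three components in $\ell^2$ rather than $\ell^1$ --- which in fact matches the norm $\|x\|_D=\|x\|_X+\|Q_0x\|_X$ of Assumption (\ref{dissip}) more literally than the paper's own computation does. The genuinely different step is your splitting $Q_0\P(w)=\P(w)Q_0+[Q_0,\P(w)]$: this cleanly isolates the statement ``uniform quasi-dissipativity on $X$ plus an $X\to X$ commutator bound by $\|\cdot\|_D$ implies uniform quasi-dissipativity on $D$'', and the commutator $[\Delta,\mc M_{\beta Gw}]f=(\Delta(\beta Gw))f+2\langle\nabla(\beta Gw),\nabla f\rangle$ is then controlled by exactly the well-adaptedness bounds \eqref{eq:Gw1}--\eqref{eq:Gw2} and the Green's-identity estimate on $\|\nabla x_1\|_2$ that the paper uses \emph{inside} the proof of Lemma \ref{lem:D_C2}. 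So your route re-derives those gradient estimates instead of quoting Remark \ref{rem:Gw}, at the price of a little duplication but with the benefit of making the mechanism (base-space dissipativity survives passage to the graph norm up to a commutator) explicit and reusable; both arguments produce an explicit admissible $\eta$ depending only on $\beta,\gamma,\nu,\mk{C},\ms{I},\veps$. The only point to be aware of is that, like the paper, you must invoke Lemma \ref{lem:D_C2} to know $\P(w)x\in D$ so that $Q_0\P(w)x$ is defined --- you do cite it, so no gap.
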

\begin{proof}
We need to show that
\begin{equation}\label{eq:dissip}
\Re\left\langle\left(\begin{array}{ccc}\mc M_{-\beta(Gw)}-\nu-\eta &0&0\\ \mc M_{\beta(Gw)}&-\gamma-\eta&0\\ \nu&\gamma&-\eta\end{array}\right)
\left(\begin{array}{c}f_1\\f_2\\f_3\end{array}\right),\left(\begin{array}{c}f_1^*\\f_2^*\\f_3^*\end{array}\right)\right\rangle_{\sigma(D,D^*)}\le 0
\end{equation}
holds for all $f=(f_1,f_2,f_3)\in H$ and some elements $f_j^*\in\ms{J}(f_j)\subset H^*$ from the duality sets (i.e., $g^*\in\ms{J}(g)$ is an element of the dual space of norm 1 such that $\langle g,g^*\rangle=\|g\|_H$). Recall that $H$ is equipped with the norm $\|g\|_H:=\|g\|_Y+\|g\|_{\Delta-\veps}$. Before proceeding, we need to better understand the dual space $H^*$ and what we may choose as an element in $\ms{J}(g)$. Note that $H$ with the given norm is essentially the diagonal subspace of the elements of the form $(g,g)$ of the $\ell^1$-sum of the spaces $(Y,\|\cdot\|_Y)$ and $(H,\|\cdot\|_{\Delta-\veps})$. \medskip

Now $Y$ is a Hilbert space by definition, whilst $(H,\|\cdot\|_{\Delta-\veps})$ is a Hilbert space due to the norm being induced by the inner product $\langle h_1,h_2\rangle_{\Delta-\veps}:=\langle (\Delta-\veps)h_1,(\Delta-\veps)h_2\rangle_Y$.
Hence the dual $H^*$ is a factor space of the $\ell^\infty$ sum of the corresponding dual spaces. A suitable choice for $g^*\in\ms{J}(g)$ is then the element $(g/\|g\|_Y,g/\|g\|_{\Delta-\veps})\in H^*$ which acts on $(H,\|\cdot\|_H)$ as follows:
\begin{equation*}
\langle h,g^*\rangle_{\sigma(H,H^*)}:=\langle h,g/\|g\|_Y\rangle_Y +\langle h,g/\|g\|_{\Delta-\veps}\rangle_{\Delta-\veps}
\end{equation*}
and this is how we choose to define $f_j^*$ for $j=1,2,3$. \medskip

The weak evaluation in \eqref{eq:dissip} then expands to:
\begin{align*}
&\langle-\beta(Gw)f_1,f_1^*\rangle_{\sigma(H,H^*)}-(\nu+\eta)\langle f_1,f_1^*\rangle_{\sigma(H,H^*)}+\langle\beta(Gw)f_1,f_2^*\rangle_{\sigma(H,H^*)}\\&-(\gamma+\eta)\langle f_2,f_2^*\rangle_{\sigma(H,H^*)}+\nu\langle f_1,f_3^*\rangle_{\sigma(H,H^*)}
+\gamma\langle f_2,f_3^*\rangle_{\sigma(H,H^*)}-\eta\langle f_3,f_3^*\rangle_{\sigma(H,H^*)} \\
=&-\eta\big(\|f_1\|_H+\|f_2\|_H+\|f_3\|_H\big)+\Big\langle-\beta(Gw)f_1,\frac{f_1}{\|f_1\|_Y}\Big\rangle_Y+\Big\langle-\beta(Gw)f_1,\frac{f_1}{\|f_1\|_{\Delta-\veps}}\Big\rangle_{\Delta-\veps} \\
&-\nu\|f_1\|_H+\Big\langle\beta(Gw)f_1,\frac{f_2}{\|f_2\|_Y}\Big\rangle_Y+\Big\langle\beta(Gw)f_1,\frac{f_2}{\|f_2\|_{\Delta-\veps}}\Big\rangle_{\Delta-\veps}-\gamma\|f_2\|_H \\
&+\nu\Big\langle f_1,\frac{f_3}{\|f_3\|_Y}\Big\rangle_Y+\nu\Big\langle f_1,\frac{f_3}{\|f_3\|_{\Delta-\veps}}\Big\rangle_{\Delta-\veps}+\gamma\Big\langle f_2,\frac{f_3}{\|f_3\|_Y}\Big\rangle_Y+\gamma\Big\langle f_2,\frac{f_3}{\|f_3\|_{\Delta-\veps}}\Big\rangle_{\Delta-\veps}=:(\ast).
\end{align*}
The properties of the dual elements, Remark \ref{rem:Gw} and the bound \eqref{eq:Gw0} in Lemma \ref{lem:Gw} imply the following inequalities for $j,k=1,2,3$:
\begin{align*}
&\left|\Re\Big\langle\beta(Gw)f_j,\frac{f_k}{\|f_k\|_Y}\Big\rangle_Y \right|\le \left|\Big\langle\beta(Gw)f_j,\frac{f_k}{\|f_k\|_Y}\Big\rangle_Y \right|
\le|\beta(Gw)f_j\|_Y\cdot\Big\|\frac{f_k}{\|f_k\|_Y}\Big\|_Y\\& \le \beta\|\mc M_{Gw}\|_{\Ell(Y)}\cdot\|f_j\|_Y 
\le \beta\mk{C}\|w\|_1\cdot\|f_j\|_Y = \beta\mk{C}\ms{I}\|f_j\|_Y ,
\end{align*}
\begin{align*}
&\left|\Re\Big\langle\beta(Gw)f_j,\frac{f_k}{\|f_k\|_{\Delta-\veps}}\Big\rangle_{\Delta-\veps}\right|
\le \left|\Big\langle\beta(Gw)f_j,\frac{f_k}{\|f_k\|_{\Delta-\veps}}\Big\rangle_{\Delta-\veps}\right|\\ &\le \|\beta(Gw)f_j\|_{\Delta-\veps}\cdot\Big\|\frac{f_k}{\|f_k\|_{\Delta-\veps}}\Big\|_{\Delta-\veps} \le \beta\|(Gw)f_j\|_H\\
&\le \beta\|\mc M_{Gw}\|_{\Ell(H)}\cdot\|f_j\|_H\le (2+\sqrt3)\beta\mk{C}\ms{I}\|f_j\|_H,
\end{align*}
\begin{align*}
&\Re\Big\langle f_j,\frac{f_k}{\|f_k\|_Y}\Big\rangle+\Re\Big\langle f_j,\frac{f_k}{\|f_k\|_{\Delta-\veps}}\Big\rangle_{\Delta-\veps}
\le \|f_j\|_Y\cdot\Big\|\frac{f_k}{\|f_k\|_Y}\Big\|_Y + \|f_j\|_{\Delta-\veps}\cdot\Big\|\frac{f_k}{\|f_k\|_{\Delta-\veps}}\Big\|_{\Delta-\veps} \\
&=\|f_j\|_Y+\|f_j\|_{\Delta-\veps}=\|f_j\|_H.
\end{align*}
Then we obtain
\begin{align*}
\Re\,(\ast) \le& -\eta\big(\|f_1\|_H+\|f_2\|_H+\|f_3\|_H\big)+(6+2\sqrt3)\beta\mk{C}\ms{I}\|f_1\|_H-\nu\|f_1\|_H \\
&-\gamma\|f_2\|_H+\nu\|f_1\|_H+\gamma\|f_2\|_H\\
=&-\eta\big(\|f_1\|_H+\|f_2\|_H+\|f_3\|_H\big)+(6+2\sqrt3)\beta\mk{C}\ms{I}\|f_1\|_H.
\end{align*}
That is, one may choose $\eta=(6+2\sqrt3)\beta\mk{C}\ms{I}$.
\end{proof}

\begin{lemma}\label{lem:dissip2}
For any $\alpha<\pi/2$ there exists an $\eta>0$ such that the operator $\ee^{\ii\phi}(\P(w)-\eta\Id)$ is dissipative on $X$ for every $w\in W$ and $\phi\leq\alpha$.
\end{lemma}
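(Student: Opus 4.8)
The plan is to exploit that $X=(L^2(\Omega))^3$ is a Hilbert space, so that the duality set of any $f$ reduces (via Riesz) to the single normalised vector $f$, and dissipativity of a bounded operator $A$ is equivalent to $\Re\langle Af,f\rangle_X\le 0$. Writing $A=\P(w)-\eta\Id$, dissipativity of $\ee^{\ii\phi}A$ means $\Re\big(\ee^{\ii\phi}\langle Af,f\rangle_X\big)\le 0$. The crucial reduction is that this must hold \emph{simultaneously} for all $|\phi|\le\alpha$: since $\Re(\ee^{\ii\phi}z)=|z|\cos(\phi+\arg z)$, requiring $\Re(\ee^{\ii\phi}z)\le 0$ for every $|\phi|\le\alpha$ is equivalent to asking that $z=\langle Af,f\rangle_X$ lie in the closed sector of half-angle $\tfrac\pi2-\alpha$ about the negative real axis, i.e. that $-\Re z\ge\tan\alpha\,|\Im z|$. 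The hypothesis $\alpha<\tfrac\pi2$ is precisely what keeps $\tan\alpha$ finite. Thus it suffices to find $\eta>0$, uniform in $w\in W$, with
\[
\eta\|f\|_X^2-\Re\langle\P(w)f,f\rangle_X\ \ge\ \tan\alpha\,\big|\Im\langle\P(w)f,f\rangle_X\big|\qquad\text{for all }f\in X.
\]
This equivalence is the only genuinely delicate point; the remainder is the same Cauchy--Schwarz/arithmetic--geometric bookkeeping already used in Proposition \ref{prop:invariant_example} and Lemma \ref{lem:dissip}.

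Next I would compute $\langle\P(w)f,f\rangle_X$ explicitly from the matrix form \eqref{eq:Qtilde}. Writing $f=(f_1,f_2,f_3)$ and using that $G(w)\ge 0$ is real-valued (so $\mc M_{G(w)}$ is self-adjoint on $Y$ and $\langle G(w)f_1,f_1\rangle_Y\ge 0$ is real), one obtains
\[
\langle\P(w)f,f\rangle_X=-\beta\langle G(w)f_1,f_1\rangle_Y-\nu\|f_1\|_Y^2-\gamma\|f_2\|_Y^2+\beta\langle G(w)f_1,f_2\rangle_Y+\nu\langle f_1,f_3\rangle_Y+\gamma\langle f_2,f_3\rangle_Y.
\]
The first three terms are real and nonpositive, so they contribute nonnegatively to $-\Re\langle\P(w)f,f\rangle_X$ and nothing to the imaginary part; only the three off-diagonal cross terms carry an imaginary part and can have an unfavourable real part.

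Finally I would control the cross terms. Setting $a:=\|f_1\|_Y$, $b:=\|f_2\|_Y$, $c:=\|f_3\|_Y$ (so $\|f\|_X^2=a^2+b^2+c^2$), Cauchy--Schwarz together with $\|G(w)\|_\infty\le\mk C\|w\|_1=\mk C\ms I$ for $w\in W$ (cf.~\eqref{eq:Gw0} and Proposition \ref{prop:invariant_example}, recalling that $\|w\|_1=\ms I$ on $W$) gives $|\langle G(w)f_1,f_2\rangle_Y|\le\mk C\ms I\,ab$, $|\langle f_1,f_3\rangle_Y|\le ac$ and $|\langle f_2,f_3\rangle_Y|\le bc$, with the same bounds for the real and imaginary parts. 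Discarding the favourable nonnegative diagonal terms bounds $-\Re\langle\P(w)f,f\rangle_X$ below by $-(\beta\mk C\ms I\,ab+\nu ac+\gamma bc)$ and $|\Im\langle\P(w)f,f\rangle_X|$ above by $\beta\mk C\ms I\,ab+\nu ac+\gamma bc$, so the displayed inequality reduces to
\[
\eta(a^2+b^2+c^2)\ \ge\ (1+\tan\alpha)\big(\beta\mk C\ms I\,ab+\nu ac+\gamma bc\big).
\]
Applying $2xy\le x^2+y^2$ to each product shows the right-hand side is at most $\tfrac12(1+\tan\alpha)(\beta\mk C\ms I+\nu+\gamma)(a^2+b^2+c^2)$, so any choice $\eta\ge\tfrac12(1+\tan\alpha)(\beta\mk C\ms I+\nu+\gamma)$ works; this bound is manifestly independent of $w\in W$ and of $\phi$, which completes the argument.
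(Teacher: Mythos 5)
Your proof is correct, but it takes a genuinely different route from the paper. The paper verifies the defining inequality $\Re\langle \ee^{\ii\phi}(\P(w)-\eta)f,f^\star\rangle\le 0$ directly for each fixed $\phi$, using a componentwise-normalised dual vector $f_j^\star=f_j/\|f_j\|_Y$ and absorbing the $\phi$-dependence only at the very end via $\cos\phi\ge\cos\alpha$; its admissible $\eta$ comes out as $\max\bigl\{\tfrac{2\beta\mk{C}\ms{I}+(1-\cos\alpha)\nu}{\cos\alpha},\tfrac{(1-\cos\alpha)\gamma}{\cos\alpha}\bigr\}$. You instead exploit the Hilbert structure of $X$ to reduce the whole family of conditions over $|\phi|\le\alpha$ to a single sector condition on the numerical range, $-\Re z\ge\tan\alpha\,|\Im z|$ with $z=\langle(\P(w)-\eta)f,f\rangle_X$, and then verify that once by Cauchy--Schwarz and AM--GM; your equivalence $\Re(\ee^{\ii\phi}z)\le0$ for all $|\phi|\le\alpha$ iff $z$ lies in the closed sector of half-angle $\pi/2-\alpha$ about the negative real axis is correct (reading ``$\phi\le\alpha$'' as $|\phi|\le\alpha$, consistent with Assumption (e)), and the explicit computation of $\langle\P(w)f,f\rangle_X$, the sign and reality of the diagonal terms, and the bound $\|G(w)\|_\infty\le\mk{C}\ms{I}$ on $W$ are all right. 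What your approach buys is conceptual clarity (uniform sectoriality of the numerical range is exactly what analyticity of angle $\alpha$ requires) and a single clean estimate; what it costs is nothing here, since $\P(w)$ is bounded on the Hilbert space $X$ so the duality set is the singleton $\{f/\|f\|_X\}$ and dissipativity is equivalent to $\Re\langle Af,f\rangle\le0$. The resulting constant $\eta=\tfrac12(1+\tan\alpha)(\beta\mk{C}\ms{I}+\nu+\gamma)$ differs from the paper's, but both diverge as $\alpha\to\pi/2$ and the lemma only asserts existence of some $\eta$, so this is immaterial.
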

\begin{proof}
By \citet[Prop.~II.3.23]{Engel-Nagel}, we need to show that
\begin{equation}\label{eq:dissipexp}
\Re\left\langle\left(\begin{array}{ccc}\ee^{\ii\phi}(\mc M_{-\beta(Gw)}-\nu-\eta) &0&0\\ \ee^{\ii\phi}\mc M_{\beta(Gw)}&\ee^{\ii\phi}(-\gamma-\eta)&0\\ \ee^{\ii\phi}\nu&\ee^{\ii\phi}\gamma&-\ee^{\ii\phi}\eta\end{array}\right)
\left(\begin{array}{c}f_1\\f_2\\f_3\end{array}\right),\left(\begin{array}{c}f_1^\star\\f_2^\star\\f_3^\star\end{array}\right)\right\rangle_X\le 0
\end{equation}
holds for all $f=(f_1,f_2,f_3)\in H$ and $f_j^\star=f_j/\|f_j\|_Y$ for $j=1,2,3$ (again, $0/0=0$). Similarly as in the proof of Lemma \ref{lem:dissip}, we have the following upper bound on the real part of the scalar product:
\begin{align*}
&\Re\ee^{\ii\phi}\Big\langle -\beta(Gw)f_1,\frac{f_1}{\|f_1\|_Y}\Big\rangle_Y - \Re\ee^{\ii\phi}(\nu+\eta)\|f_1\|_Y + \Re\ee^{\ii\phi}\Big\langle -\beta(Gw)f_1,\frac{f_2}{\|f_2\|_Y}\Big\rangle_Y \\
&- \Re\ee^{\ii\phi}(\gamma+\eta)\|f_2\|_Y + \Re\ee^{\ii\phi}\nu\Big\langle f_1,\frac{f_3}{\|f_3\|_Y}\Big\rangle_Y
+ \Re\ee^{\ii\phi}\gamma\Big\langle f_1,\frac{f_2}{\|f_3\|_Y}\Big\rangle_Y - \Re\ee^{\ii\phi}\eta\|f_3\|_Y \\
&\le \|f_1\|_Y\big(2\beta\|\mc M_{Gw}\|_{\Ell(Y)}+\nu-\Re\ee^{\ii\phi}(\nu+\eta)\big) + \|f_2\|_Y\big(\gamma-\Re\ee^{\ii\phi}(\gamma+\eta)\big) - \|f_3\|_Y\Re\ee^{\ii\phi}\eta\\
&\le \|f_1\|_Y\big(2\beta\|Gw\|_{\infty}+\nu-\Re\ee^{\ii\phi}(\nu+\eta)\big) + \|f_2\|_Y\big(\gamma-\Re\ee^{\ii\phi}(\gamma+\eta)\big) - \|f_3\|_Y\Re\ee^{\ii\phi}\eta
.
\end{align*}
By \eqref{eq:Gw0} we have $\|Gw\|_{\infty}\leq\mk{C} \|w\|_1=\mk{C}\ms{I}$, and also $0<\cos\alpha\leq\cos\phi$.
Thus one may choose 
\[
\eta=\max\left\{\frac{2\beta\mk{C}\ms{I}+(1-\cos\alpha)\nu}{\cos \alpha}
,\frac{(1-\cos\alpha)\gamma}{\cos\alpha}
\right\}
\]
to satisfy inequality \eqref{eq:dissipexp}.
\end{proof}

In applications, a widely used class of maps $G\colon X\to\C(\overline\Omega)$ is that of convolution-type operators \citep[as in][]{Takacs-etal}, i.e., we have
\begin{equation}\label{eq:Gconvolution}
(Gw)(z):=\int_\Omega \langle w(x),h(z-x)\rangle\dd x
\end{equation}
for all $w\in X$ and $z\in\Omega$, where $h\in\left(\C^2\left(\overline{\Omega-\Omega}\right)\right)^3$ is the convolution kernel and $\langle\cdot,\cdot\rangle$ denotes the scalar product in $\RR^3$. Then the terms $\pm\beta S(t)\mc I(t)$ in \eqref{eq:dxsir} describe an infection process where the point-to-point infection rate depends only on the directed vector between the points. We note that in most cases, $G$ depends only on the distribution $I$ of the infected individuals, and not on $S$ or $R$. \medskip

\begin{lemma}\label{lem:Gw}
The map $G\colon X\to\C(\overline\Omega)$ defined in \eqref{eq:Gconvolution} is well-adapted.
\end{lemma}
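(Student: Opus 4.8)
The plan is to exploit that $Gw$ is a convolution against the smooth kernel $h=(h_1,h_2,h_3)$, so that every derivative demanded in \eqref{eq:Gw0}--\eqref{eq:Gw2} can be moved onto $h$ and then estimated by pairing an $L^\infty$ bound on the kernel (or its derivative) against the $L^1$ norm of $w$. Concretely, for $z\in\Omega$ I would write
\[
(Gw)(z)=\sum_{i=1}^3\int_\Omega w_i(x)\,h_i(z-x)\,\dd x,
\]
and record that, since each $h_i\in\C^2\!\left(\overline{\Omega-\Omega}\right)$ on the compact set $\overline{\Omega-\Omega}$, the kernel together with all of its first- and second-order partials is bounded and uniformly continuous there.

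First I would justify differentiating under the integral sign. For a fixed spatial partial $\partial_j$ the integrand $w_i(x)h_i(z-x)$ has $z$-derivative dominated by $\|\partial_j h_i\|_\infty\,|w_i(x)|$, which is integrable because $\Omega$ is bounded and hence $w_i\in L^2(\Omega)\subset L^1(\Omega)$; the dominated convergence theorem then yields
\[
\partial_j(Gw)(z)=\sum_{i=1}^3\int_\Omega w_i(x)\,(\partial_j h_i)(z-x)\,\dd x,
\qquad
\Delta(Gw)(z)=\sum_{i=1}^3\int_\Omega w_i(x)\,(\Delta h_i)(z-x)\,\dd x,
\]
and the same uniform continuity of $\partial_j h_i$ and $\Delta h_i$ shows these are continuous in $z$. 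Thus $Gw\in\C^2(\overline\Omega)$ and the three norms appearing in the definition of \emph{well-adapted} are meaningful.

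The estimates then follow at once. For each $T\in\{\Id,\partial_1,\partial_2,\Delta\}$ (with $\partial_1,\partial_2$ the spatial partials) and every $z\in\Omega$, the same manipulation gives
\[
\big|(T\,Gw)(z)\big|\le\sum_{i=1}^3\|T h_i\|_\infty\,\|w_i\|_1\le\Big(\max_{1\le i\le 3}\|T h_i\|_\infty\Big)\|w\|_1,
\]
using $\|w\|_1=\sum_i\|w_i\|_1$. Taking the supremum over $z$ and then setting
\[
\mk{C}:=\max_{1\le i\le 3}\max\big\{\|h_i\|_\infty,\ \|\partial_1 h_i\|_\infty,\ \|\partial_2 h_i\|_\infty,\ \|\Delta h_i\|_\infty\big\}<\infty
\]
simultaneously establishes \eqref{eq:Gw0}, \eqref{eq:Gw1} and \eqref{eq:Gw2} for all $w\in X$ (a fortiori for $w\in D$), the finiteness of $\mk{C}$ being guaranteed by $h\in\left(\C^2\!\left(\overline{\Omega-\Omega}\right)\right)^3$ together with compactness of $\overline{\Omega-\Omega}$. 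The only step that is not purely mechanical is the interchange of differentiation and integration (and the resulting continuity of the derivatives); but this is the textbook application of dominated convergence, whose hypotheses are supplied exactly by the $\C^2$-regularity of $h$ and the boundedness of $\Omega$, so I anticipate no genuine obstacle.
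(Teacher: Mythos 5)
Your proof is correct and follows essentially the same route as the paper: move each derivative onto the kernel $h$ via the convolution structure and bound the result by $\|Th\|_\infty\cdot\|w\|_1$, taking $\mk{C}$ as the maximum of the relevant sup-norms of $h$ and its derivatives. The only difference is that you explicitly justify differentiating under the integral sign by dominated convergence, a step the paper's proof performs silently; otherwise the estimates coincide.
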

\begin{proof}
The convolution \eqref{eq:Gconvolution} is linear in $w$. It remains to show the validity of the norm estimates.
The inequality \eqref{eq:Gw0} directly follows from the convolution form \eqref{eq:Gconvolution} as
\begin{align*}
\|Gw\|_\infty&=\sup_{z\in\Omega}|(Gw)(z)| = \sup_{z\in\Omega}\Big|\int_\Omega\langle w(x),h(z-x)\rangle \dd x\Big|\le\sup_{z\in\Omega}\int_\Omega\big|\langle w(x),h(z-x)\rangle\big|\dd x \\
&=\sup_{z\in\Omega}\int_\Omega|w(x)|\cdot|h(z-x)|\dd x\le \|h\|_\infty\cdot\int_\Omega |w(x)|\dd x=\|h\|_\infty\cdot\|w\|_1.
\end{align*}
The inequality \eqref{eq:Gw1} can be shown similarly for $j=1,2,3$:
\begin{align*}
\|\partial_j(Gw)\|_\infty &= \sup_{z\in\Omega}\big|\big(\partial_j(Gw)\big)(z)\big|=\sup_{z\in\Omega}\Big|\partial_j\int_\Omega\langle w(x),h(z-x)\rangle\dd x\Big| \\
&\le \sup_{z\in\Omega}\int_\Omega\big|\partial_j\langle w(x),h(z-x)\rangle\big|\dd x = \sup_{z\in\Omega}\int_\Omega\big|\langle w(x),(\partial_jh)(z-x)\rangle\big|\dd x \\
&\le \sup_{z\in\Omega}\int_\Omega|w(x)|\cdot|(\partial_jh)(z-x)|\dd x \le \|\partial_jh\|_\infty\cdot\int_\Omega|w(x)|\dd x=\|\partial_jh\|_\infty\cdot\|w\|_1.
\end{align*}
To prove the last inequality \eqref{eq:Gw2}, we write
\begin{align*}
\|\Delta(Gw)\|_\infty &= \sup_{z\in\Omega}\big|\big(\Delta(Gw)\big)(z)\big|=\sup_{z\in\Omega}\Big|\int_\Omega\langle w(x),h(z-x)\rangle\dd x\Big| \\
&\le \sup_{z\in\Omega}\int_\Omega \big|\Delta\langle w(x),h(z-x)\rangle\big|\dd x = \sup_{z\in\Omega}\int_\Omega \big|\langle w(x),(\Delta h)(z-x)\rangle\big|\dd x \\
&\le \sup_{z\in\Omega}\int_\Omega |w(x)|\cdot|(\Delta h)(z-x)|\dd x \le \|\Delta h\|_\infty\cdot\int_\Omega|w(x)|\dd x
= \|\Delta h\|_\infty \cdot\|w\|_1
\end{align*}
with the notation $\Delta h = (\Delta h_1,\Delta h_2,\Delta h_3)$.
\end{proof}

\begin{proposition}\label{prop:example_D}
Assume that the conditions of Proposition \ref{prop:invariant_example} are satisfied, and $G$ is of the convolution form given in \eqref{eq:Gconvolution}. Then Assumptions \eqref{smooth2}--\eqref{analytic} also hold for $Q_0$, $\Q:=\P+\veps\mr{Id}$, $\alpha\in(0,\pi/2)$ and an appropriate $c>0$.
\end{proposition}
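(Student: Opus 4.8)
The plan is to verify Assumptions \eqref{smooth2}, \eqref{dissip} and \eqref{analytic} one at a time, leaning on the four preceding lemmas. At the outset I would invoke Lemma~\ref{lem:Gw} so that $G$ is well-adapted, making Lemmas~\ref{lem:D_C2}, \ref{lem:dissip} and \ref{lem:dissip2} applicable, and fix an arbitrary $\alpha\in(0,\pi/2)$ (the sector angle in the statement). For \eqref{smooth2}, the key point is that linearity of $G$ makes $\Q=\P+\veps\Id$ \emph{affine} in $w$: its nonconstant entries are the multiplication blocks $\mc M_{\pm\beta G(w)}$, so $\Q(w)=\mc L(w)+\mc B$ with $\mc L$ linear and $\mc B$ a constant block. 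The linear part is bounded from $X$ to $\Ell(X)$, since $\|\mc M_{\beta G(w)}\|_{\Ell(Y)}\le\beta\|Gw\|_\infty\le\beta\mk C\|w\|_1\le\beta\mk C\sqrt{3\lambda(\Omega)}\,\|w\|_X$ by \eqref{eq:Gw0} together with the norm comparison recorded at the start of the section. An affine map with bounded linear part is of class $\C^\infty$ (its first Fréchet derivative is the constant $\mc L$, all higher derivatives vanish), hence a fortiori $\C^2$ on $W$.

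For \eqref{dissip}, fix $w\in W\cap D$. Lemma~\ref{lem:D_C2} shows that $\mc M_{\beta(Gw)}$ maps $H$ boundedly into $H$, so $\P(w)$, and with it $\widehat{Q}(w)=\P(w)-(c-\veps)\Id$, leaves $D$ invariant and is bounded on $(D,\|\cdot\|_D)$; the same lemma gives the continuous differentiability of $\Q\colon D\to\Ell(D)$ on $W\cap D$. For dissipativity I would take the $\eta_D>0$ from Lemma~\ref{lem:dissip} with $\P(w)-\eta_D\Id$ dissipative on $D$ and write $\widehat{Q}(w)=(\P(w)-\eta_D\Id)+(\eta_D+\veps-c)\Id$, which is dissipative on $(D,\|\cdot\|_D)$ as soon as $c\ge\eta_D+\veps$.

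For \eqref{analytic}, the operator $Q_0=P_0-\veps\Id$ is self-adjoint with spectrum contained in $(-\infty,-\veps]$, hence sectorial and the generator of an analytic contraction semigroup on $\Sigma_\alpha$ for every $\alpha<\pi/2$, settling the requirement on $Q_0$. Lemma~\ref{lem:dissip2} then supplies $\eta_X>0$ with $\ee^{\ii\phi}(\P(w)-\eta_X\Id)$ dissipative on $X$ for all $w\in W$ and $|\phi|\le\alpha$. Splitting $\ee^{\ii\phi}\widehat{Q}(w)=\ee^{\ii\phi}(\P(w)-\eta_X\Id)+\ee^{\ii\phi}(\eta_X+\veps-c)\Id$ and noting that $\ee^{\ii\phi}\mu\Id$ is dissipative for real $\mu\le0$ and $|\phi|<\pi/2$ (its pairing with the relevant duality element equals $\mu\cos\phi$ times a nonnegative quantity), dissipativity of $\ee^{\ii\phi}\widehat{Q}(w)$ follows once $c\ge\eta_X+\veps$.

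Finally I would set $c:=\veps+\max\{\eta_D,\eta_X,\beta\mk C\ms I\}$, where the last entry makes $\widehat{Q}(w)=\Q(w)-c\Id$ dissipative on $X$ as required for \eqref{Q} via Proposition~\ref{prop:invariant_example}. I expect the main obstacle to be precisely this bookkeeping of the shift: one must check that a single constant $c$ renders $\widehat{Q}(w)$ simultaneously dissipative on $X$, dissipative on $(D,\|\cdot\|_D)$, and dissipative after rotation by $\ee^{\ii\phi}$. Since each condition is merely a lower bound on $c$, taking the maximum resolves it, and the remaining ingredients — affineness $\Rightarrow\C^2$ and analyticity of the shifted Neumann Laplacian — are routine once the lemmas are in hand.
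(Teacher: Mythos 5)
Your proposal is correct and follows essentially the same route as the paper: the affine-map argument for Assumption \eqref{smooth2}, Lemmas \ref{lem:D_C2} and \ref{lem:dissip} for Assumption \eqref{dissip}, Lemma \ref{lem:dissip2} plus analyticity of the Neumann Laplacian for Assumption \eqref{analytic}, and a single $c$ taken as $\veps$ plus the maximum of the required shifts (the paper's explicit choice $c=\max\bigl\{\tfrac{2\beta\mk{C}\ms{I}+(1-\cos\alpha)\nu}{\cos\alpha},\tfrac{(1-\cos\alpha)\gamma}{\cos\alpha},(6+2\sqrt3)\beta\mk{C}\ms{I}\bigr\}+\veps$ is exactly your $\veps+\max\{\eta_D,\eta_X\}$, which already dominates $\beta\mk{C}\ms{I}$). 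You merely spell out details the paper leaves implicit, such as the dissipativity of the rotated nonpositive multiple of the identity.
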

\begin{proof}
First let us show Assumption \eqref{smooth2}. Note that the map $w\mapsto \mc{M}_{\beta(Gw)}$ is linear and bounded as a map $X\to \C(\overline\Omega)$, and so $w\mapsto\widetilde{Q}(w)$ is a bounded affine map $X\to \ms{L}(X)$, hence infinitely differentiable.\\
Let $c:=\max\left\{\frac{2\beta\mk{C}\ms{I}+(1-\cos\alpha)\nu}{\cos \alpha}
,\frac{(1-\cos\alpha)\gamma}{\cos\alpha},(6+2\sqrt3)\beta\mk{C}\ms{I}\right\}+\veps$.
Then Assumption \eqref{dissip} follows from Lemmas \ref{lem:D_C2} and \ref{lem:dissip}, whilst Assumption \eqref{analytic} follows from Lemma \ref{lem:dissip2} and the standard fact that the homogeneous Neumann Laplacian is analytic on $L^2(\Omega)$ with angle $\pi/2$.
\end{proof}

Combining Propositions \ref{prop:invariant_example} and \ref{prop:example_D}, and recalling that Assumptions \eqref{ass:1}--\eqref{ass:3} are automatically satisfied with the choices detailed in Remark \ref{rem:point-delay}, we see that Theorem \ref{thm:main} can be applied to our example, and we obtain the main result of this section.

\begin{corollary}
Assume that the conditions of Proposition \ref{prop:invariant_example} are satisfied, and $G$ is of the convolution form given in \eqref{eq:Gconvolution}. Further, let the initial history function $\varphi$ satisfy Assumption \eqref{ass:4}. Then we have convergence of second order on any compact time-interval of the Magnus-type integrator \eqref{eq:magnus} applied to the epidemic model \eqref{eq:dxsir}. Moreover, the total population remains constant and the positivity of the solution is preserved.
\end{corollary}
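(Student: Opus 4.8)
The plan is to check that the epidemic system \eqref{eq:dxsir}, recast as the quasilinear delay equation \eqref{eq:delay} with operator family $Q(w)=P_0+\widetilde P(w)$ and with $F$ the evaluation map $\xi\mapsto\xi(-\delta)$, satisfies every one of Assumptions \eqref{Q}--\eqref{ass:4}, and then to quote Theorem \ref{thm:main}. The analytic assumptions come essentially for free from the preceding results: Assumption \eqref{Q} is precisely the content of Proposition \ref{prop:invariant_example}, where $Q_0:=P_0-\veps\,\mr{Id}$ generates a contraction semigroup with $0\in\varrho(Q_0)$, the bounded operators $\widehat Q(w)=\widetilde Q(w)-c\,\mr{Id}$ are dissipative on $W$, and $w\mapsto\widetilde Q(w)$ is continuous. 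Since $G$ is of convolution form, Lemma \ref{lem:Gw} shows it is well-adapted, and Proposition \ref{prop:example_D} then supplies Assumptions \eqref{smooth2}--\eqref{analytic} with the common constant $c$ and a sector angle $\alpha\in(0,\pi/2)$.

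It remains to verify Assumptions \eqref{ass:1}--\eqref{ass:4}, and here the point-delay structure recorded in Remark \ref{rem:point-delay} does the work: one sets $\kappa_{0,\tau}=1$, $\kappa_{\ell,\tau}=0$ for $\ell\geq1$, and takes every $F_{\ell,\tau}$ to be the identity. Then $F(\xi)=\xi(-\delta)$ is linear and bounded, hence lies in $\C^2$ and is Lipschitz with constant $1$ (and likewise as a map into $D$), and it depends only on the restriction to $[-\delta,-\epsilon]$ because $-\delta\in[-\delta,-\epsilon]$; this gives \eqref{ass:1}. For \eqref{ass:2} the discretisation $F_\tau$ reproduces $F$ exactly, so the quadrature error \eqref{eq:kvadr_err} vanishes and \eqref{eq:weights} holds with $L=1$. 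Assumption \eqref{ass:3} is immediate: $F(v)=v(-\delta)\in W$ whenever $v([-\delta,-\epsilon])\subset W$, and \eqref{eq:Winv} reduces to $W\subset W$ since $F_{0,\tau}=\mr{Id}$. Finally \eqref{ass:4} is imposed on $\varphi$ by hypothesis.

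With all Assumptions in force, Theorem \ref{thm:main} applies for every integer $m\geq0$, giving existence and $\C^2\cap\C^1$-regularity of the solution, its membership in $W$, the $W$-preservation of the integrator, and the bound \eqref{eq:rec_bound} on $[-\delta,m\epsilon]$. Any compact interval $[0,T]$ lies in some $[0,m\epsilon]$, so conclusion \eqref{W4} yields the claimed second-order convergence on arbitrary compact intervals. Conclusion \eqref{W2} places the exact solution $u(t)$ in $W$ for all $t$, which by the definition of $W$ means exactly that $u(t)\geq0$ (positivity of $S,I,R$) and $\int_\Omega S(t)+I(t)+R(t)=\ms I$ (conservation of the total population); conclusion \eqref{W3} transfers both properties to the numerical approximations $u^{(\tau)}_n$.

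I expect no genuine obstacle, since the substantive analysis is already contained in Propositions \ref{prop:invariant_example} and \ref{prop:example_D} and Lemma \ref{lem:Gw}. The only points demanding care are the bookkeeping that reconciles the constants $\veps$, $c$, and $\alpha$ between the two propositions, so that the single decomposition $Q(w)=Q_0+\widehat Q(w)+c\,\mr{Id}$ of Assumption \eqref{Q} holds simultaneously with the dissipativity and sectoriality estimates, and the final translation of the abstract invariance $u(t)\in W$ into the two physically meaningful conservation statements.
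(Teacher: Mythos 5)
Your proposal is correct and follows essentially the same route as the paper: Assumption (\ref{Q}) from Proposition \ref{prop:invariant_example}, Assumptions (\ref{smooth2})--(\ref{analytic}) from Lemma \ref{lem:Gw} and Proposition \ref{prop:example_D}, Assumptions (\ref{ass:1})--(\ref{ass:3}) from the point-delay choices of Remark \ref{rem:point-delay}, Assumption (\ref{ass:4}) by hypothesis, and then Theorem \ref{thm:main} with the definition of $W$ giving positivity and conservation of $\int_\Omega S+I+R$. You merely spell out the verification of (\ref{ass:1})--(\ref{ass:3}) in more detail than the paper, which simply declares them automatic.
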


\begin{remark}
We may also combine the above setting with Example \ref{exm:integral}, reflecting a different transmission/infection dynamic. Instead of formula \eqref{eq:calI}, we then consider
\begin{equation}\label{eq:calIintegral}
\mc I(t):=\frac 2\delta\int_{-\delta}^{-\delta/2} G\left(u(t+s)\right)\dd s
\end{equation}
in the model \eqref{eq:dxsir}, with the same convolution $G$ as defined before in \eqref{eq:Gconvolution}. This would correspond to an infection process where the latent period is not fixed of time $\delta$, but rather exhibits a uniform distribution within a timeframe between $\delta/2$ and $\delta$.\\
Actually, other distributions for the latent period would lead to a further modified version of formula \eqref{eq:calIintegral} of the form
\begin{equation}\label{eq:calIintegral2}
\mc I(t):=\mb{E}_{\mb{S}} G(u(t-\mb{S})),
\end{equation}
where $\mb{S}\in[\epsilon,\delta]$ is the random variable that encodes the latent period (we assume that the distribution of the latent period remains constant over time). Since the convolution operator is a continuous linear map, it commutes with the expected value, and so
\[
\mc I(t):= G(\mb{E}_{\mb{S}}u(t-\mb{S})).
\]
Now recall that our choice for $W$ was convex, and so invariant under the expectation operator, so the above arguments apply, and Theorem \ref{thm:main} remains applicable for this wide range of models as well.\\
\end{remark}


\bigskip
\paragraph{Acknowledgement} Both authors acknowledge the Bolyai J{\'a}nos Research Scholarship of the Hungarian Academy of Sciences. P.~Cs.~was also supported by the Ministry of Innovation and Technology NRDI Office under the grant SNN-125119 and within the framework of the Artificial Intelligence National Laboratory Program, Budapest. D.~K.-K. was also supported by the NRDI ``{\'E}lvonal'' KKP-133921 grant.


\end{document}